\documentclass[reqno]{amsart}
\usepackage{xr}
\usepackage{hyperref}
\externaldocument[sl2:]{SLII-mathematika}
\usepackage{graphicx}
\usepackage[usenames,dvipsnames,svgnames,table]{xcolor}
\usepackage{epstopdf}
\usepackage{booktabs}
\usepackage{amssymb,amsmath}
\usepackage{amsthm}
\usepackage{enumerate}
\newtheorem{theorem}{Theorem}[section]
\newtheorem{proposition}[theorem]{Proposition}
\newtheorem{lemma}[theorem]{Lemma}

\newtheorem{remark}[theorem]{Remark}
\theoremstyle{definition}
\newtheorem{example}[theorem]{Example}
\newtheorem{definition}[theorem]{Definition}
\numberwithin{equation}{section}

\newcommand{\C}{{\mathbb C}}
\newcommand{\R}{{\mathbb R}}
\newcommand{\Z}{{\mathbb Z}}
\newcommand{\N}{{\mathbb N}}
\newcommand{\Q}{{\mathbb Q}}

\newcommand{\CB}{{\mathcal B}}

\newcommand{\CJ}{{\mathcal J}}
\def\BjoernerLovasz(#1,#2){\Pi_{N,n}}
\newcommand{\CL}{{\mathcal L}}
\newcommand{\CM}{{\mathcal M}}
\newcommand{\CP}{{\mathcal P}}

\newcommand{\Barvi}{\mathrm{Barvinok}}
\newcommand{\ConeByCone}{\mathrm{cone\text{-}by\text{-}cone}}
\newcommand{\Barvinok}{{E^{k, \Barvi}}}

\newcommand{\retroS}{{M}}
\newcommand{\polypp}{{\mathcal Q}}

\newcommand{\CR}{{\mathcal R}}

\newcommand{\CV}{{\mathcal V}}
\newcommand{\la}{{\langle}}
\newcommand{\ra}{{\rangle}}
\newcommand{\e}{{\mathrm{e}}}

\newcommand{\lin}{\operatorname{lin}}

\newcommand{\vol}{\operatorname{vol}}
\def\binomial(#1,#2){\binom{#1}{#2}}
\def\mult(#1,#2){\binom{#1}{#2}}

\renewcommand{\c}{{\mathfrak{c}}}
\renewcommand{\d}{{\mathfrak{d}}}
\newcommand{\f}{{\mathfrak{f}}}
\renewcommand{\t}{{\mathfrak{t}}}

\newcommand{\p}{{\mathfrak{p}}}
\newcommand{\pp}{{\mathfrak{p}^{\mathrm{partition}}}}

\renewcommand{\u}{{\mathfrak{u}}}
\newcommand{\lattice}{\Lambda}

\newcommand{\Jposet}[2]{\mathcal J^{#1}_{\geq #2}}
\newcommand{\Moebius}{\textup{M\"obius}}

\setcounter{tocdepth}{3}

\makeatletter
\let\OurMathBbAux=\mathbb
\DeclareRobustCommand\OurMathBb{\OurMathBbAux}
\let\mathbb=\OurMathBb
\let\bfseries=\undefined
\DeclareRobustCommand\bfseries
{\not@math@alphabet\bfseries\mathbf
  \boldmath\fontseries\bfdefault\selectfont\let\OurMathBbAux=\mathbf}
\def\@thm#1#2#3{%
  \ifhmode\unskip\unskip\par\fi
  \normalfont
  \trivlist
  \let\thmheadnl\relax
  \let\thm@swap\@gobble
  \thm@notefont{\fontseries\mddefault\upshape\unboldmath}
  \thm@headpunct{.}
  \thm@headsep 5\p@ plus\p@ minus\p@\relax
  \thm@space@setup
  #1
  \@topsep \thm@preskip               
  \@topsepadd \thm@postskip           
  \def\@tempa{#2}\ifx\@empty\@tempa
    \def\@tempa{\@oparg{\@begintheorem{#3}{}}[]}%
  \else
    \refstepcounter{#2}%
    \def\@tempa{\@oparg{\@begintheorem{#3}{\csname the#2\endcsname}}[]}%
  \fi
  \@tempa
}
\makeatother

\newcommand{\tmagenta}[1]{}
\newcommand{\tgreen}[1]{}
\newcommand{\tred}[1]{}
\newcommand{\tblue}[1]{}

\title{Three Ehrhart quasi-polynomials}
\author{V. Baldoni}
\address{Velleda Baldoni: Dipartimento di Matematica, Universit\`a degli studi di  Roma ``Tor Vergata'',
Via della ricerca scientifica 1, I-00133, Italy}
\email{baldoni@mat.uniroma2.it}
\author{N. Berline}
\address{Nicole Berline:  \'Ecole Polytechnique, Centre de Math\'ematiques Laurent Schwartz, 91128 Palaiseau Cedex, France}
\email{Nicole.Berline@math.cnrs.fr}
\author{J. A. De Loera}
\address{Jes\'us A. De Loera:  Department of
 Mathematics, University of California,
 Davis, One Shields Avenue, Davis, CA, 95616, USA}
\email{deloera@math.ucdavis.edu}
\author{M. K\"oppe}
\address{Matthias~K\"oppe:  Department of
 Mathematics, University of California,
 Davis, One Shields Avenue, Davis, CA, 95616, USA}
\email{mkoeppe@math.ucdavis.edu}
\author{M. Vergne}
\address{Mich\`ele Vergne: Universit\'e Paris 7 Diderot, Institut Math\'ematique de
Jussieu, Sophie Germain, case 75205, Paris Cedex 13} \email{michele.vergne@imj-prg.fr}

\begin{document}
\maketitle{}

\begin{abstract}
Let  $\p(b)\subset \R^d$ be a semi-rational parametric polytope, where $b=(b_j)\in \R^N$ is a real multi-parameter.
We study intermediate sums of polynomial functions $h(x)$ on $\p(b)$,
\begin{equation*}\label{eq:abstract}
 S^L (\p(b),h)=\sum_{y}\int_{\p(b)\cap (y+L)} h(x)\,\mathrm dx,
\end{equation*}
where we integrate over  the intersections of $\p(b)$  with  the subspaces
parallel to a fixed rational subspace $L$ through all lattice points, and sum
the integrals.  The purely discrete sum is of course a particular case
($L=0$), so $S^0(\p(b), 1)$ counts the integer points in the parametric
polytopes.

The chambers are  the open conical subsets of $\R^N$ such that the shape of $\p(b)$ does not change when $b$ runs over a chamber. We first prove that on every chamber of $\R^N$, $ S^L (\p(b),h)$ is given by a
quasi-polynomial function of $b\in \R^N$. A key point of our paper 
is an analysis of the interplay between two notions of degree on quasi-polynomials: the usual polynomial degree and
a filtration, called the local degree.

Then, for a fixed $k\leq d$, we consider a particular linear combination of
such intermediate weighted sums,  which was introduced by Barvinok in order to
compute efficiently the $k+1$ highest coefficients of the Ehrhart
quasi-polynomial which gives the number of points of a dilated rational
polytope. Thus, for each chamber, we obtain  a  quasi-polynomial function of
$b$, which we call \emph{Barvinok's patched quasi-polyno\-mial} (at codimension level
$k$).

Finally, for each chamber, we introduce a new quasi-polyno\-mial function of
$b$, the \emph{cone-by-cone patched quasi-polyno\-mial} (at codimension level $k$),
defined in a refined way by linear combinations of intermediate generating
functions for the cones at vertices of $\p(b)$.

We prove that both patched quasi-poly\-nomials agree with
the discrete weighted sum $b\mapsto   S^{\{0\}}(\p(b),h)$ in the
terms corresponding to the $k+1$ highest polynomial degrees.
\end{abstract}

\begin{figure}
\begin{center}
 \includegraphics[width=4cm]{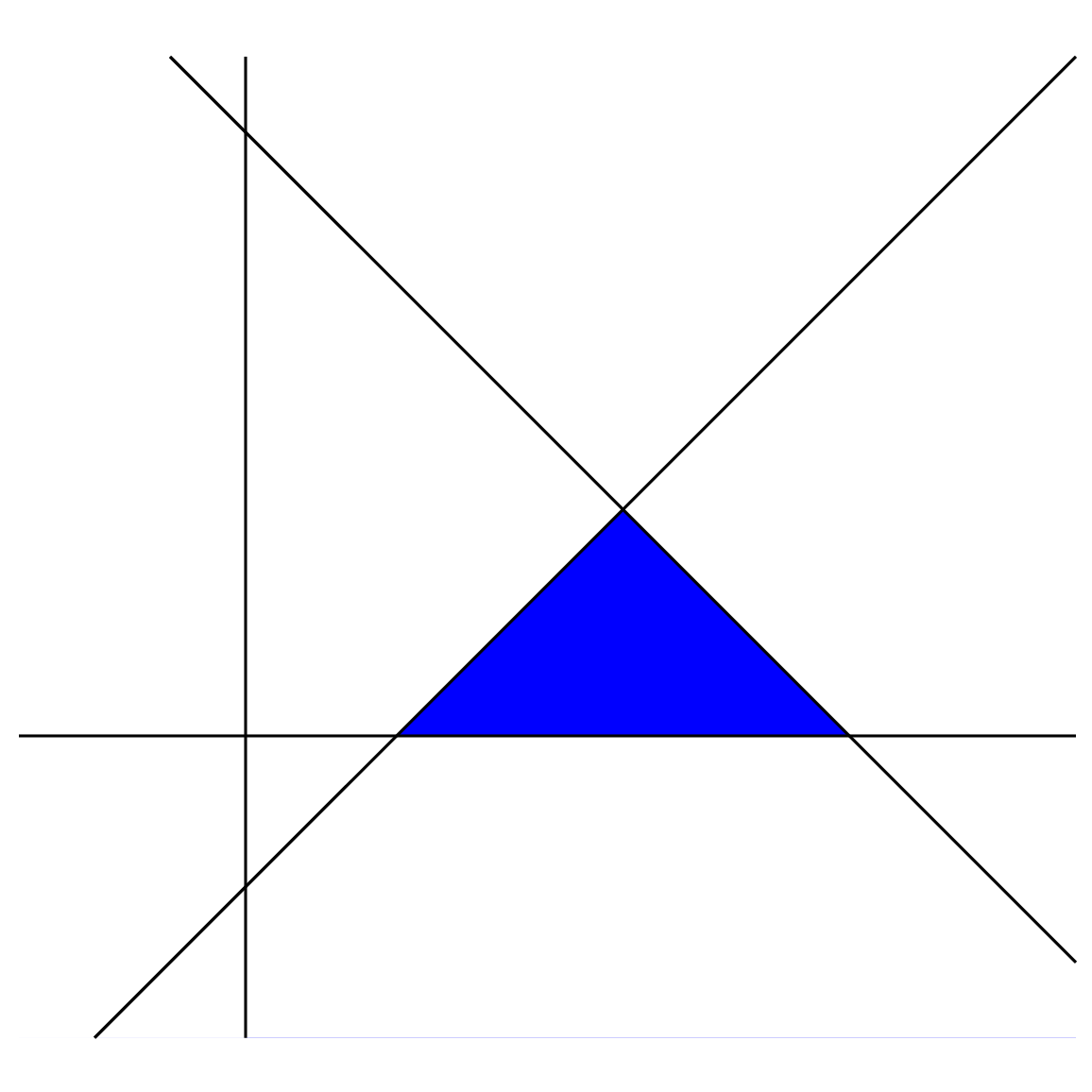}\
 \ \includegraphics[width=4cm]{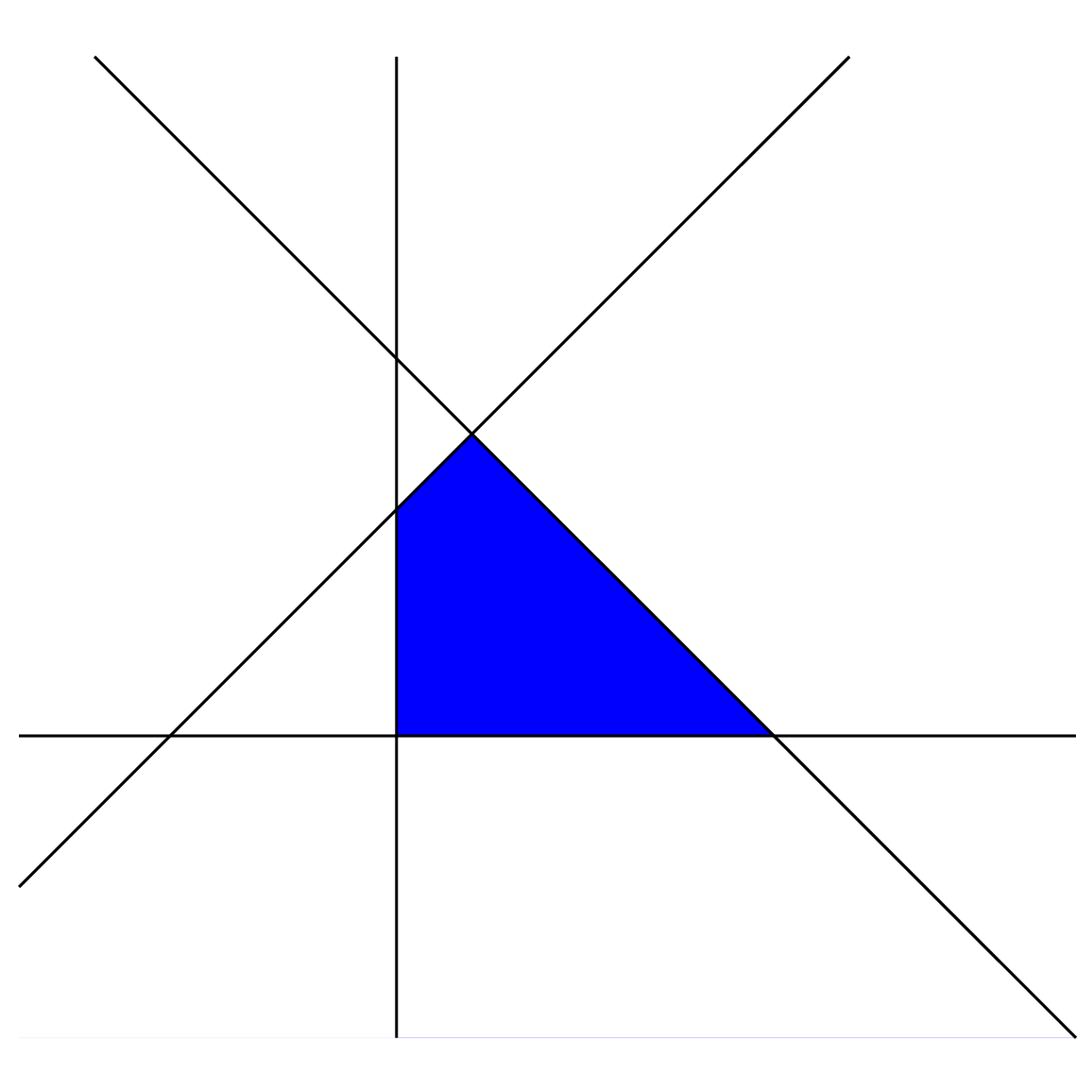}\ \ \includegraphics[width=4cm]{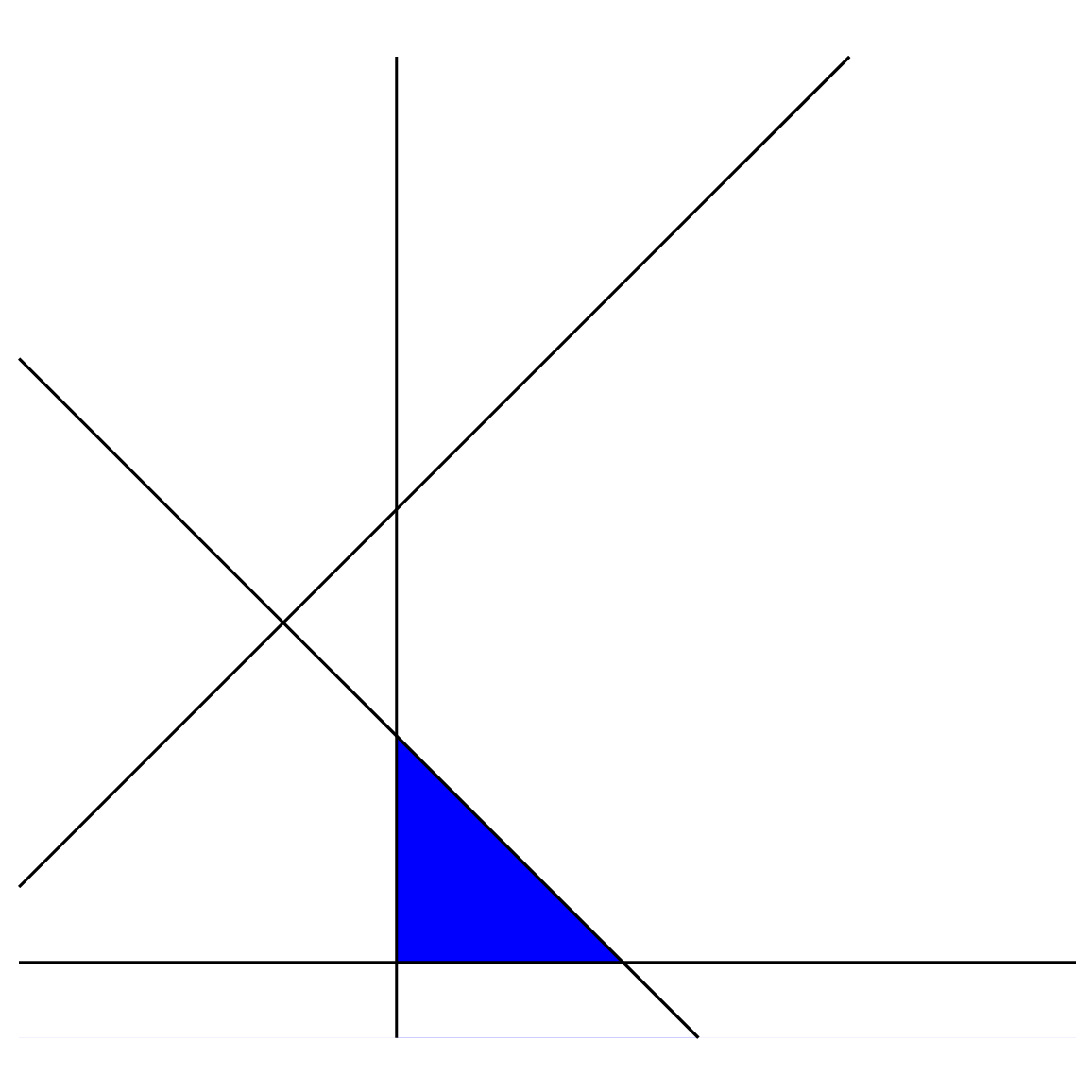}
 \caption{The parametric polytope $\p(b)$ from Example
   \ref{ex:parametric-dim2}, for $b$ in various
   chambers}\label{fig:parametric-dim2again}
 \end{center}
\end{figure}

\clearpage
{
\tableofcontents
}
\newpage

\tred{My questions appear in red.}
\tmagenta{To-do items for myself (no input needed) are magenta.}
\tgreen{Resolved questions, or mere change notes, appear in green.}
\tblue{Answers to my questions appear in blue.}
\tgreen{Many old comments have been moved to file \texttt{3-Ehrhart-polynomials-old-comments}.}

\section{Introduction}
In this article, a \emph{parametric semi-rational polytope} $\p(b)\subset \R^d$ is defined by inequalities:
\begin{equation}\label{def:pb}
\p(b)=\bigl\{\,x\in \R^d: \langle \alpha_j, x\rangle \leq b_j,\; j=1,\ldots, N\,\bigr\}
\end{equation}
where $\alpha_1,\alpha_2,\ldots,\alpha_N$ are {\em fixed} linear forms with integer
coefficients (the case of rational $\alpha_j$ can be treated
by rescaling $\alpha_j$ and $b$)
and the parameter $b=(b_1,b_2,\ldots,b_N)$ varies in $\R^N$.
The shape of the polytope $\p(b)$ varies when the parameter $b$ varies (see Figure \ref{fig:parametric-dim2again}).
\emph{Chambers} $\tau\subset \R^N$ are open convex polyhedral  cones such that the shape of $\p(b)$ does not change
when $b$ runs over $\tau$ (see Definition \ref{def:chamber}).
We consider weighted integrals and sums, where the weight is a polynomial
function $h(x)$ of degree~$m$ on  $\R^d$.
 $$
 I(\p(b),h)=\int_{\p(b)} h(x) \,\mathrm dx, \;\;\;\;     S(\p(b),h)=\sum_{x\in \p(b)\cap \Z^d} h(x).
 $$
When the weight is the constant $1$,  then $I(\p(b),1)$ is  the volume of $\p(b)$, while
 $S(\p(b),1)$ is  the number of integral points in $\p(b)$.

As introduced by Barvinok, we also study intermediate sums associated to a  rational subspace $L$:
\begin{equation}
 S^L(\p(b),h)=\sum_{y} \int_{\p(b)\cap (y+L)} h(x)\,\mathrm dx.
\end{equation}
Here, we integrate over  the intersections of $\p(b)$  with  the subspaces parallel to a fixed rational subspace $L$ through all lattice points, and sum the integrals.

The unweighted case ($h=1$), the study of the counting function~$S(\p(b),1)$
is, of course, very important in algebraic combinatorics.  Polytopes depending
on multiple parameters have appeared, for example, in the celebrated
Knutson--Tao honeycomb model \cite{MR1671451}.  Also the classical vector
partition functions \cite{Brion1997residue} appear as a special case.
However, a large part of the literature has focused on the case of
one-parameter families of dilations of a single polytope (see our discussion
on Ehrhart theory in section~\ref{s:intro-single-parameter} below), with few exceptions
\cite{koeppe-verdoolaege:parametric,beck:multidimensional-reciprocity,HenkLinke}.
Indeed \cite{HenkLinke} was part of our motivation to consider the case of a
real multi-parameter and not just one-parameter dilations as in our previous
articles on the subject.  Our interest in the general problem $S(\p(b),h)$ is
motivated in part by the important applications in compiler optimization and
automatic code parallelization, in which multiple parameters arise naturally
(see \cite{Clauss1998parametric,Verdoolaege2005PhD,Verdoolaege2007parametric}
and the references within).
For a broader context of analytic combinatorics, we refer to
\cite{Pemantle:2013:ACS:2505450}. 

The relations between the two functions $I(\p(b),h)$ and $S(\p(b),h)$  of the
parameter vector $b$
have been the central theme of several works. In this article,
we (hope to) add a contribution to these questions.

We introduce the new notion  of \emph{local degree}, which we believe is important.
A function $b\mapsto f(b)$ of the real multi-parameter $b$ is of local degree
(at most) $\ell$  if it can be expressed as a linear combination
of products of  a number less or equal to $\ell$ of  step-linear forms of $b$
and linear forms of $b$ (see Definition~\ref{def:step-poly-V} below and
Figure~\ref{fig:irrational-rectangle}, left). If the number of linear forms is less than or
equal to~$q$, we say that
$f$ is of \emph{polynomial degree} (at most) $q$.


\bigbreak

 The present article is the culmination of a study based on \cite{SLII2014,so-called-paper-2}. These two articles were devoted to the properties of intermediate generating functions  only for  polyhedral \emph{cones}. Here, using the Brianchon--Gram set-theoretic decomposition of a polytope as a signed sum of its supporting cones, we study the function $b\mapsto S^L(\p(b),h)$.

We show first that, on each chamber, the function $b\mapsto S^L(\p(b),h)$ is
of local degree at most $d+m$.  In particular its term of polynomial degree $0$ is expressed as a linear combination of at most $d+m$ step-linear functions of $b$.

Then we study the terms of highest polynomial degree of $S(\p(b),h)$
on each chamber.

Given a fixed integer $k\leq d$
 we construct two quasi-polynomials,
 \emph{Barvinok's patched quasi-polyno\-mial} (at  level
$k$) and the \emph{cone-by-cone patched quasi-polyno\-mial}
(at level $k$).
The two   constructions  use linear combinations of intermediate sums associated to rational subspaces $L$ of codimension less or equal to $k$.
 The first one is due to Barvinok  \cite{barvinok-2006-ehrhart-quasipolynomial}.
  We give a more streamlined proof of Barvinok's Theorem 1.3 in  \cite{barvinok-2006-ehrhart-quasipolynomial}  and a more explicit formula  for it when
$\p(b)$ is a simplex.
 The cone-by-cone patched quasi-polyno\-mial is a new construction.
We prove that both patched quasi-poly\-nomials agree with
the discrete weighted sum $b\mapsto   S(\p(b),h)$ in the
terms corresponding to the $k+1$ highest polynomial degrees $d+m, d+m-1,\ldots, d+m-k$.

\smallbreak

We now give more details on the content of this article.

\subsection{Weighted Ehrhart quasi-polynomials and intermediate sums}\label{s:intro-single-parameter}

When a rational parameter vector~$b$ is fixed, then the polytope $\p=\p(b)$ is a rational polytope. If we dilate it by a non-negative number~$t$, the function $t\mapsto S(t\p,h)$ is a
quasi-polynomial function of $t$, i.e., it takes the form
$$ S(t\p,h) = E(t) = \sum_{j=0}^{d+m} E_j(t) t^j, $$
where the coefficients $E_j(t)$ are periodic functions of~$t$, rather than constants.
It is called the \emph{weighted Ehrhart quasi-polynomial} of $\p$.
In traditional Ehrhart theory, only non-negative \emph{integer} dilation
factors~$t$ are considered, and so a coefficient function with period~$q\in\Z_{>0}$ can
be given as a list of $q$~values, one for each residue class modulo~$q$.
However, the approach to computing Ehrhart quasi-polynomials via
generating functions of parametric polyhedra
\cite{Verdoolaege2007parametric,Verdoolaege2005PhD,koeppe-verdoolaege:parametric},
which we follow in the present paper, leads to a natural, shorter representation of the coefficient functions as closed-form formulas (so-called
\emph{step-polynomials}) of the dilation parameter~$t$, using the
``fractional part'' function.  These closed-form formulas are naturally valid for
arbitrary non-negative \emph{real} dilation
parameters~$t$, as well as any real (not just rational) parameter $b$. 
This fact was implicit in the computational works following this
method \cite{Verdoolaege2007parametric,Verdoolaege2005PhD}, and was made
explicit in \cite{koeppe-verdoolaege:parametric}.
The resulting \emph{real Ehrhart theory} has recently caught the interest of other
authors \cite{linke:rational-ehrhart,HenkLinke}; see also~\cite{so-called-paper-2}.
\tgreen{Updated previous paragraph according to Michele's comment 2014-08-30.} 

The highest ``expected'' degree term of the weighted Ehrhart quasi-polyno\-mial is
$I(\p,h) t^{d+m}$, if $h(x)$ is homogeneous of degree $m$; of course, this
term may vanish, as the example $\p =[-1,1]$, $h(x) = x$ illustrates.
\tgreen{(Thanks Michele for pointing this out. I have rephrased using
  ``expected'' degree -- I think this makes the next sentence more
  transparent. --Matthias)}
For a study of the coefficients  of degree $d+m$, $d+m-1$, \dots, $d+m-k$ of the quasi-polynomial $S(t\p,h)$,
a key tool introduced  by Barvinok
(in \cite{barvinok-2006-ehrhart-quasipolynomial}, 
for the unweighted case $h=1$) is the \emph{intermediate weighted sum} $S^L(\p,h)$,
where $L$ is a rational subspace of~$V=\R^d$:
\begin{equation}\label{eq:abstract-first}
 S^L (\p,h)=\sum_{y}\int_{\p\cap (y+L)} h(x)\,\mathrm dx,
\end{equation}
where the summation variable $y$ runs over the projected lattice in $V/L$.  The polytope $\p$ is sliced by subspaces parallel to $L$ through lattice points
and  the integrals of $h$ over the slices are added (see Figure~\ref{fig:parametric-dim2againagain}).
When $L=V$, $S^L(\p,h)$ is just the integral $I(\p,h)$, while for $L=\{0\}$, we recover the discrete sum $S(\p,h)$.
In the present study, we generalize Barvinok's ideas in several ways, building
on our previous work in \cite{so-called-paper-1,so-called-paper-2,SLII2014}.
\begin{figure}
\begin{center}
  \includegraphics[width=6cm]{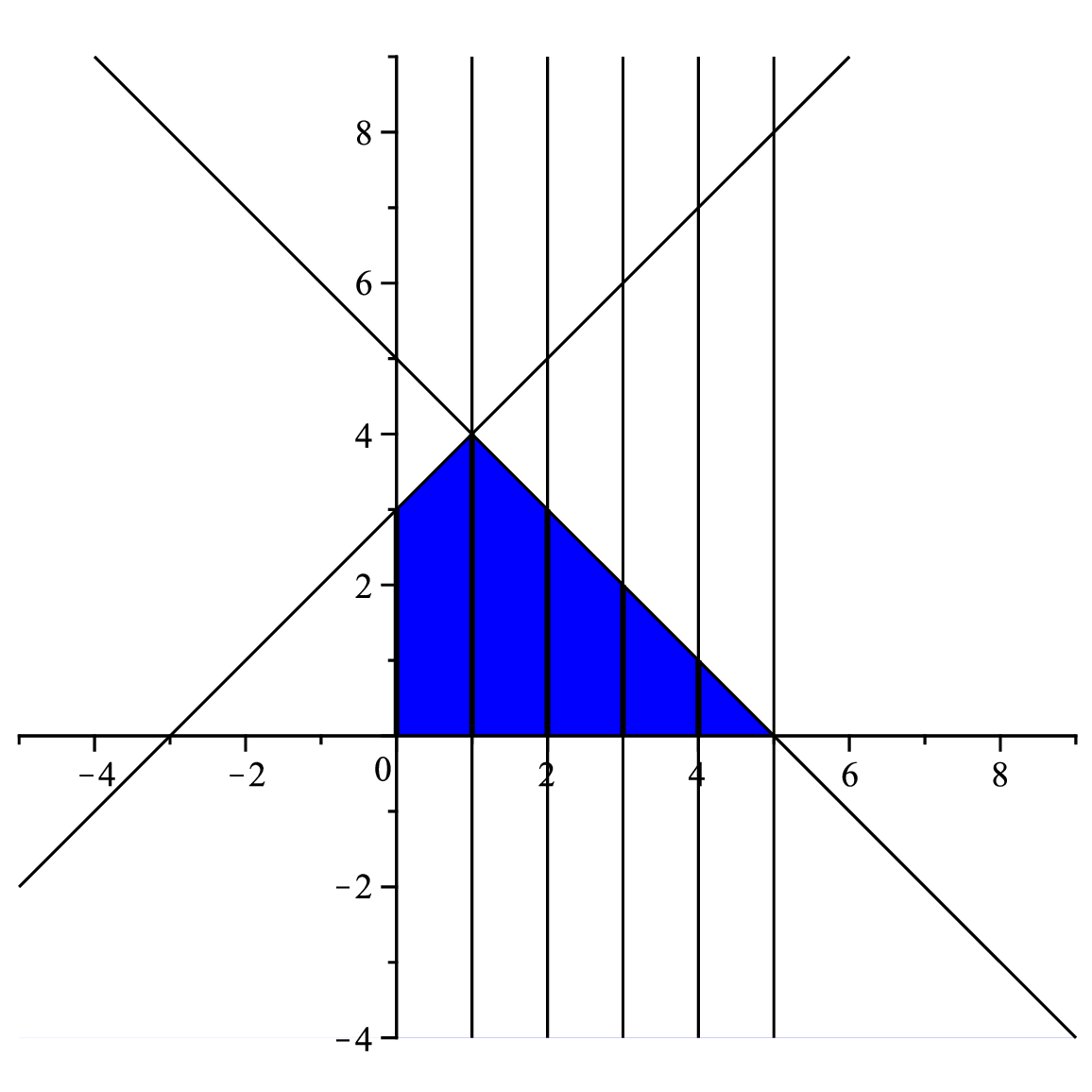}
  \caption{Intermediate sum over a polytope $\p$ (blue).  We sum the integrals
    over the slices of~$p$ parallel to~$L$ going through lattice points
    (vertical lines).
}\label{fig:parametric-dim2againagain}
 \end{center}
\end{figure}
\begin{figure}
\begin{center}
 \includegraphics[width=6cm]{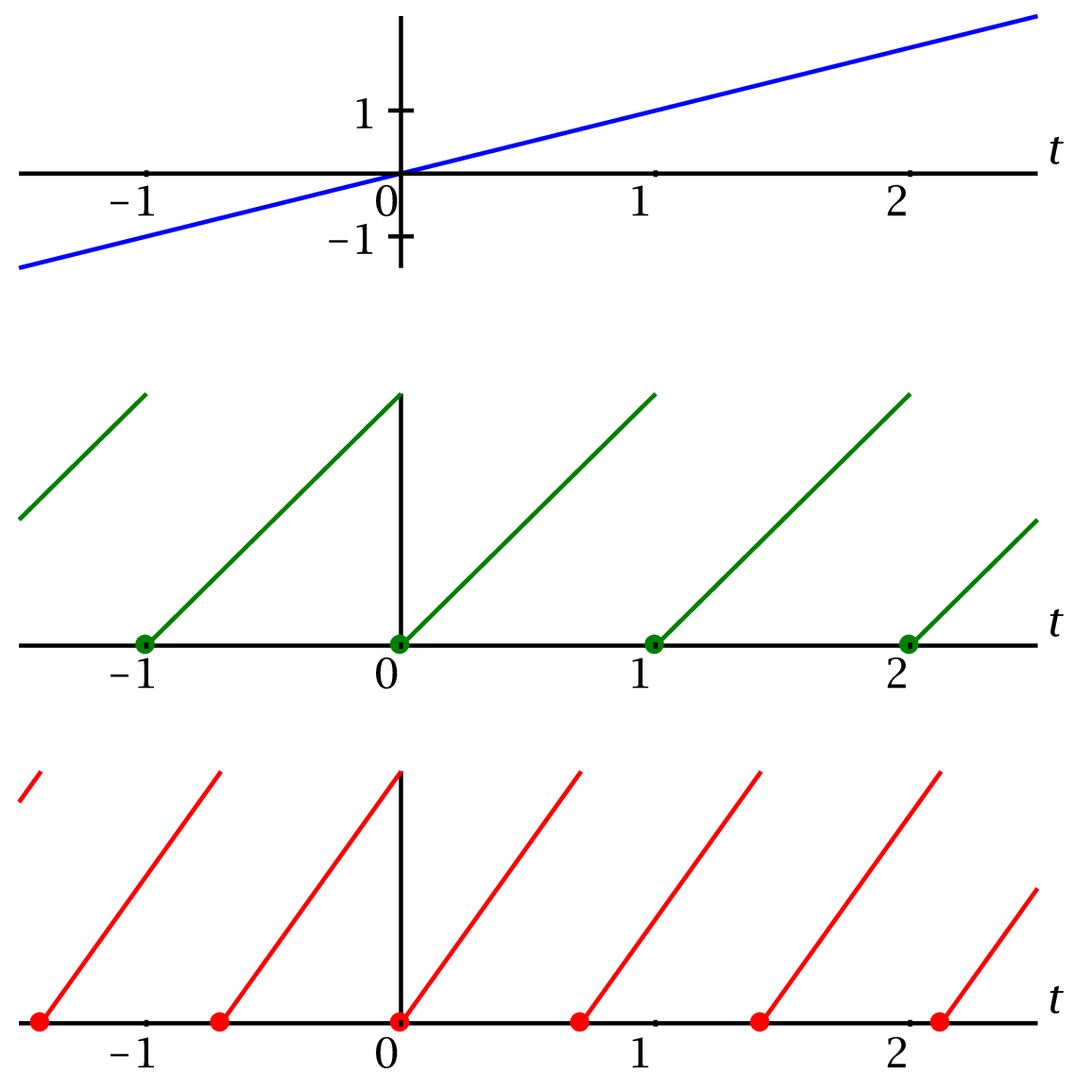}\quad
 \includegraphics[width=6cm]{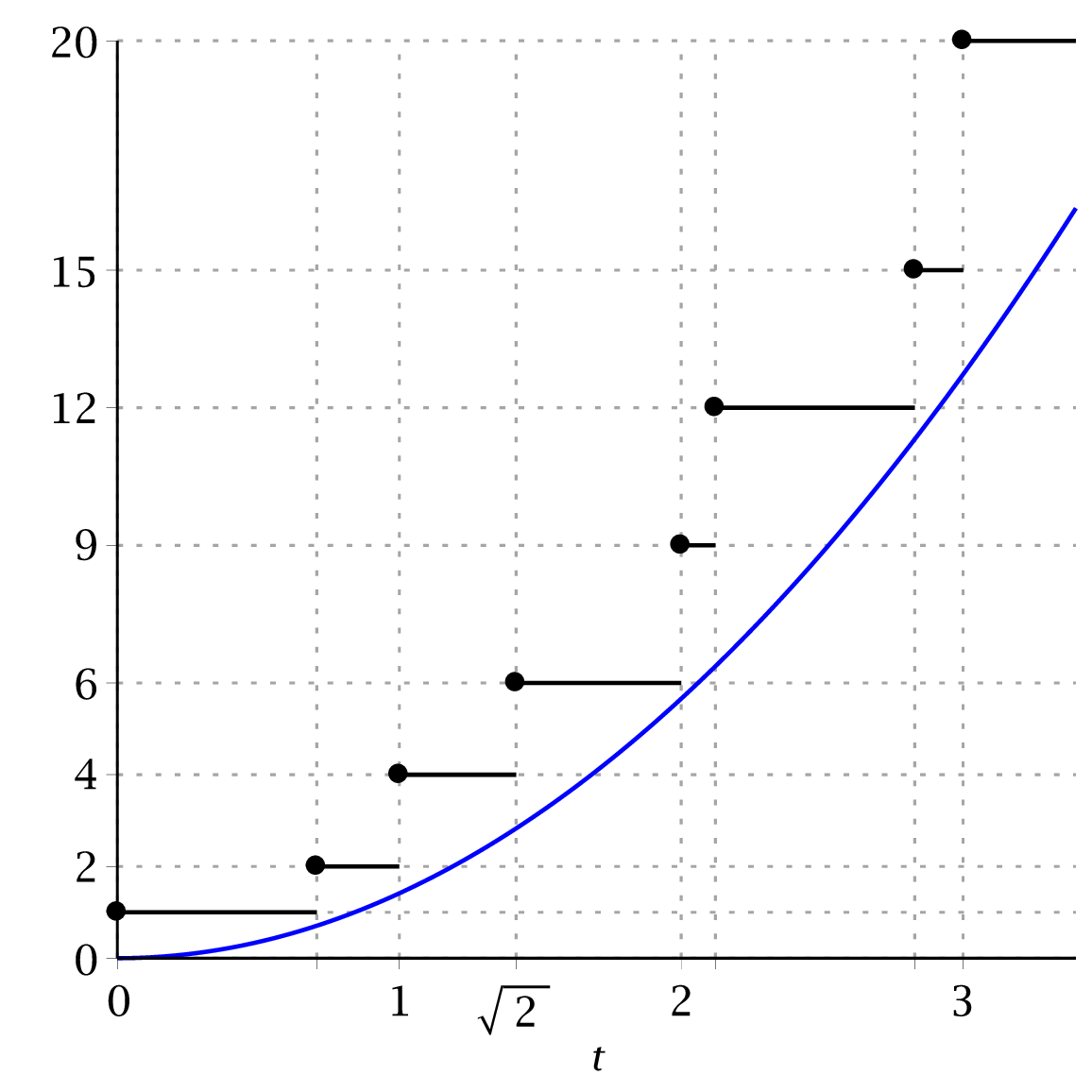} \\
\caption{Example \ref{ex:irrational-rectangle}.
  \emph{Left.} The linear function $t$ (\emph{top, blue}), the rational
  step-linear function $\{t\}$ (\emph{center, green}), and the irrational step-linear function
  $\{\sqrt2 t\}$ (\emph{bottom, red}). 
  \emph{Right.} The number of lattice points (\emph{black}) of  the  rectangle $[0,t]\times[0,t\sqrt{2}]$ is a function in
  the algebra generated by these three functions; the area
  (\emph{blue}) is a polynomial function.}\label{fig:irrational-rectangle}
 \end{center}
\end{figure}

\subsection{Real multi-parameter quasi-polynomials and their degrees}
To describe our contributions, let us first define our notion of (real,
multi-parameter) \emph{quasi-polynomials on $\R^N$} and notions of degree, 
which are crucial for our paper. 

First we define \emph{(rational) step-polynomials}.
For $t\in \R$, we denote by $\{t\}\in [0,1[$ the fractional part of $t$.
Thus $t\mapsto \{t\}$ is a function on $\R/\Z$.
Let $\eta=(\eta_1,\eta_2,\ldots, \eta_N)\in \Q^N$, which we consider as a linear form on $\R^N$.
We say that the function $b\mapsto \{\langle \eta,b\rangle \}$
is a \emph{(rational) step-polynomial  function}  of \emph{(step) degree} (at most) one
(or a \emph{(rational) step-linear function}).
If all $\eta_i$ have the same denominator $q$, this is a  function of $b\in \R^N/q\Z^N$.
We define $\polypp(\R^N)$ to be the algebra of functions on $\R^N$ generated by the functions  $b\mapsto \{\langle \eta,b\rangle \}$.
 An element of $\polypp(\R^N)$  is called a \emph{(rational) step-polynomial} on $\R^N$.
 The space $\polypp(\R^N)$ has an obvious filtration, where
$\polypp_{[\leq k]}(\R^N)$ is the linear span  of   $k$ or fewer products of functions  $b\mapsto \{\langle \eta,b\rangle \}$.
The elements of $\polypp_{[\leq k]}(\R^N)$ are said to be (rational) step-polynomials of
\emph{(step) degree (at most)~$k$}
.

Next, we define
$\polypp\CP(\R^N)$ to be the algebra of functions on $\R^N $ generated by (rational) step-polynomials and ordinary polynomial functions of $b$.
Elements of $\polypp\CP(\R^N)$ are called \emph{quasi-polynomials} on
$\R^N$ and take the form
\begin{equation}
  E(b) = \sum_{\substack{j=(j_1,\dots,j_N) \in \Z_{\geq0}^N\\
      |j| = j_1 + \dots + j_N \leq d + m}}  E_j(b)\, b^j,
  \label{eq:quasi-polynomial-by-monomials}
\end{equation}
using multi-index notation for the monomials $b^j = b^{j_1} \cdots
b^{j_N}$. Here the $E_j(b)$ are step-polynomials. 

This definition of quasi-polynomials on $\R^N$ is a natural generalization of
the notion of quasi-polynomial function on the lattice $\Z^N$, which is more
familiar in Ehrhart theory and the theory of vector partition functions.
Describing quasi-polynomials in this form, using step-polynomials as its
coefficient functions, has been implicit in the computational works using the
method of parametric generating functions
\cite{Verdoolaege2007parametric,Verdoolaege2005PhD}.  The extension to real
(rather than integer or rational) multi-parameters~$b$ appeared in
\cite{koeppe-verdoolaege:parametric}.

The algebra $\polypp\CP(\R^N)$ inherits a grading from the degree of
polynomials, which we call the \emph{polynomial degree}.  This is the notion
of degree that has been used throughout the literature on Ehrhart theory.

Crucial to our study will be the interplay of the polynomial degree with
another notion of degree, first introduced in our paper~\cite{SLII2014}.
The algebra $\polypp\CP(\R^N)$ also has a filtration, which we call the
\emph{local degree}. It combines
the polynomial degree and the filtration according to step degrees on
step-polynomials.  For instance, $b \mapsto b_1b_2^2 \{b_1+b_3\}$ has
polynomial degree $3$, step degree $1$, and local degree $4$.
This terminology of local degree comes from the fact that on each local region
$n<b_1+b_3<n+1$, $n\in\Z$, this function coincides with a polynomial function of $b$ of
degree $4$. \tgreen{Added last sentence from Michele's comment 2014-08-30.}

\subsection{First contribution: Intermediate real multi-parameter
  Ehr\-hart quasi-polynomials and their degree structure} 
\tgreen{I've changed it to `degree structure'
  (meaning polynomial and local degree)
  from `bidegree structure' (which now means exclusively means degree in s and
  degree in $\xi$).} 

Let $L$ be a rational subspace of~$V$.
We show that
$b\mapsto S^L (\p(b),h)$ is given by  a quasi-polynomial formula when the real
multi-parameter~$b$ varies in a chamber.  
This generalizes results in the literature in various ways.  
\begin{enumerate}[i.]
\item It extends from the case of discrete sums ($L=\{0\}$) as it appears
  in Ehrhart theory and the theory of vector partition functions
  \cite{Brion1997residue} to the general case of intermediate sums.
\item It generalizes these works also to the real multi-parameter case.
\item It analyzes the degree structure, i.e., the interplay of local degree
  and polynomial degree.  This is crucial for our second
  contribution, relating the terms of highest polynomial degree in~$b$ of
  $S(\p(b),h)$ to those of certain linear combinations of intermediate sums.
\end{enumerate}
Our theorem is the following (see Theorem~\ref{th:ehrhart-chamber} for a more
detailed statement).
\begin{theorem}
  \label{th:ehrhart-chamber-intro-summary}
  Assume that the weight $h(x)$ is  homogeneous  of degree $m$.
  When the real multi-parameter  $b$ varies in the closure of a chamber,
  the function
  $b\mapsto S^L(\p(b),h)$ is given by a quasi-polynomial function  of $b$ of
  local degree equal to $d+m$.
\end{theorem}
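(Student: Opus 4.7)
The strategy is to reduce the polytope-level statement to the corresponding statement for affine cones, via the Brianchon--Gram set-theoretic identity announced in the introduction, which expresses $\mathbf{1}_{\p(b)}$ as a signed sum of the indicator functions of the affine supporting cones $\c_F(b)$ at the faces $F$ of $\p(b)$. Applying the valuation $S^L(\cdot,h)$ term by term yields
\[
S^L(\p(b),h) \;=\; \sum_{F}(-1)^{\dim F}\,S^L(\c_F(b),\,h).
\]
On the closure of a fixed chamber $\tau$, the combinatorial type of $\p(b)$ is constant: the faces are indexed by fixed sets of active constraints $I\subset\{1,\dots,N\}$, the direction part of each $\c_F(b)$ is a fixed cone $\c_I\subset V$ independent of $b$, and the affine offset of $\c_F(b)$ is obtained by solving the linear subsystem $\langle\mu_j,x\rangle = b_j$, $j\in I$. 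Because the $\mu_j$ are integral, this offset is a linear function of $b$ with rational coefficients, which we denote by $s_I(b)$.

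The next step invokes the cone results of \cite{SLII2014,so-called-paper-2}: for each fixed cone $\c\subset V$ and each homogeneous weight $h$ of degree $m$, the function $s\mapsto S^L(s+\c,h)$ is a quasi-polynomial on $V$ of local degree at most $d+m$. The substitution $s=s_I(b)$, being rational-linear, pulls back a linear form $\langle\eta,s\rangle$ to a linear form in $b$ and a step-linear form $\{\langle\eta,s\rangle\}$ to a step-linear form in $b$; hence a monomial of local degree~$\ell$ in $s$ pulls back to one of local degree at most $\ell$ in $b$, and the whole quasi-polynomial structure is preserved. Summing over the finitely many faces of the common combinatorial type then yields a quasi-polynomial expression on $\tau$ of local degree at most $d+m$.

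The \emph{main obstacle} is thus entirely contained in the cone step, namely in bounding the local degree of $s\mapsto S^L(s+\c,h)$ by $d+m$. This is delicate because $S^L$ combines an integration along $L$-parallel slices, which contributes polynomial factors of degree up to $\dim L + m$, with a lattice-point sum over $V/L$, which contributes step-polynomial factors from the fractional parts of the projected apex; a priori these two contributions could compound, and it is exactly the local-degree notion introduced in \cite{SLII2014} that shows their total stays bounded by $d+m$. Once this cone estimate is granted, the polytope result is assembled as above. The matching equality $d+m$ in the theorem then comes from the top polynomial-degree part, whose leading coefficient reduces to the volume-type integral $I(\p(b),h_m)$ of the degree-$m$ homogeneous component $h_m$ of $h$, a genuine polynomial in $b$ of degree exactly $d+m$ on $\tau$.
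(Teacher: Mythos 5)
Your reduction to supporting cones via Brianchon--Gram, and the use of the constancy of the combinatorial type on a chamber, match the paper's approach. The gap is in how you handle the cone contributions. You claim that for each fixed cone $\c\subset V$ and each homogeneous weight $h$ of degree~$m$, the map $s\mapsto S^L(s+\c,h)$ is a quasi-polynomial in $s$ of local degree at most $d+m$. But for a pointed affine cone the defining series $\sum_y\int_{(s+\c)\cap(y+L)} h$ does not converge, so this expression is not well-defined as stated. What \cite{SLII2014} actually controls is the meromorphic generating function $S^L(s+\c)(\xi)$, whose degree-$m$ homogeneous component $S^L(s+\c)_{[m]}(\xi)$ is a \emph{rational} function of $\xi$ (not a polynomial), and for an individual vertex $s_B(b)$ this rational function may be singular at the evaluation point $\xi=\ell$ corresponding to $h(x)=\langle\ell,x\rangle^m/m!$. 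Your formula $S^L(\p(b),h)=\sum_F(-1)^{\dim F}\,S^L(\c_F(b),h)$ therefore has ill-defined summands on the right-hand side, and the quasi-polynomial structure you want to pull back along $b\mapsto s_I(b)$ is simply not present at the level of a single cone.

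What closes this gap in the paper---and is flagged explicitly right after the statement of Theorem~\ref{th:ehrhart-chamber}---is that although each vertex term $M^L(s_B(b),\c_B)_{[m-r]}(\xi)\,\langle\xi,s_B(b)\rangle^r/r!$ is only a rational function of $\xi$, the sum over all $B\in\CB_\tau$ for each fixed $r$ is a \emph{polynomial} in $\xi$ (because $S^L(\p(b))(\xi)$ is holomorphic), and hence \emph{can} be evaluated at $\xi=\ell$. The correct order of operations is therefore: split each cone term using equation~\eqref{eq:MversusS}, invoke Theorem~\ref{sl2:prop:homogeneous-ML} of \cite{SLII2014} to control the bidegree of $M^L(s,\c)_{[m-r]}$ (step degree $\leq m+d-r$ in $s$, degree $m-r$ in $\xi$), compose with the linear map $b\mapsto s_B(b)$, sum over $\CB_\tau$ to obtain a function polynomial in~$\xi$, and only then set $\xi=\ell$. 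Your intuition that the local-degree calculus of \cite{SLII2014} keeps the integration and lattice-sum contributions from compounding is correct, but the as-stated single-cone evaluation $S^L(s+\c,h)$ is where the argument formally breaks.
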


In particular, the terms of highest polynomial degree in $b$ of the function
$S^L(\p(b),h)$ form the homogeneous  polynomial of degree $d+m$ given by the
integral  $I(\p(b),h)$, while the term of polynomial degree $0$ (the
``constant term") is a step-polynomial of (step) degree at most $d+m$.

\begin{example}
  The simplest example is $V=\R$ with $\la\alpha_1,x\ra=x$ and
  $\la\alpha_2,x\ra=-x$. Thus $ \p(b)=\{\,x \in \R : x\leq b_1, -x\leq b_2\,\}$.
  If $b_1+b_2\geq 0$, the polytope $ \p(b)$ is the interval $[-b_2,b_1]$.  If
  $L=V$, then $ S^V(\p(b),1)=b_1+b_2$, while for $L=\{0\}$,
  $S^{\{0\}}(\p(b),1)= b_1+ b_2 -\{b_1\}-\{b_2\}+1$.  These two functions have
  local degree $1$ with respect to $(b_1,b_2)$.
\end{example}

The family $\p(b)$ is the family of polytopes obtained from a fixed simple  rational polytope
$\p$ by moving  each facet  parallel to itself in all possible ways.
We can consider smaller  families of polytopes with parallel faces.
For example, as in classical Ehrhart theory, we can dilate $\p$ to obtain $t\p$ for $t\in \R_{\geq 0}$, or more generally we can consider \emph{Minkowski linear systems}
$t_1\p_1+t_2\p_2+\cdots+t_q\p_q$.
By specializing our quasi-polynomial formulas, we obtain formulas for $S^L(t\p,h)$ and
$S^L(t_1\p_1+t_2\p_2+\cdots+t_q\p_q,h)$.

\begin{figure}
\begin{center}
  \includegraphics[width=6cm]{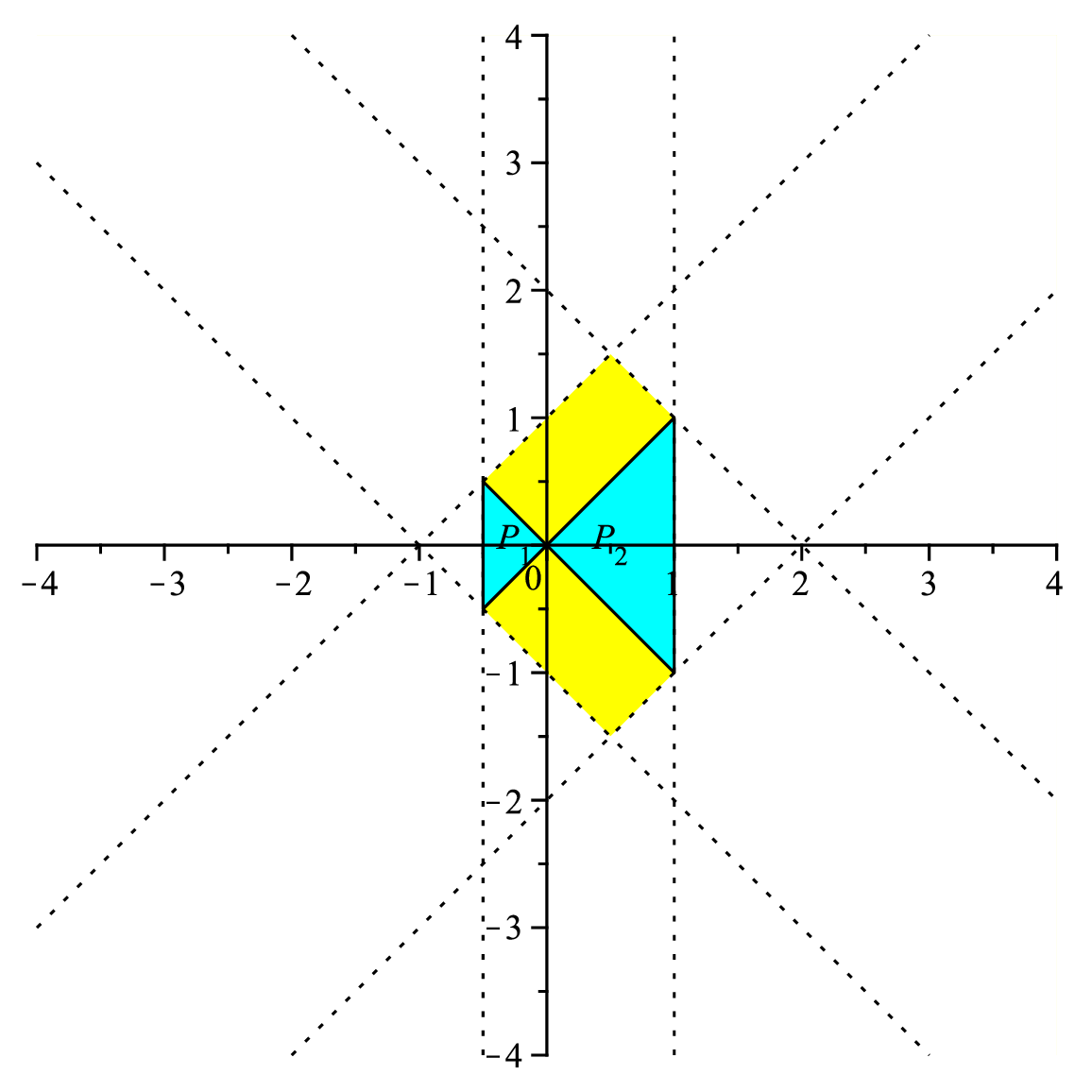} \includegraphics[width=6cm]{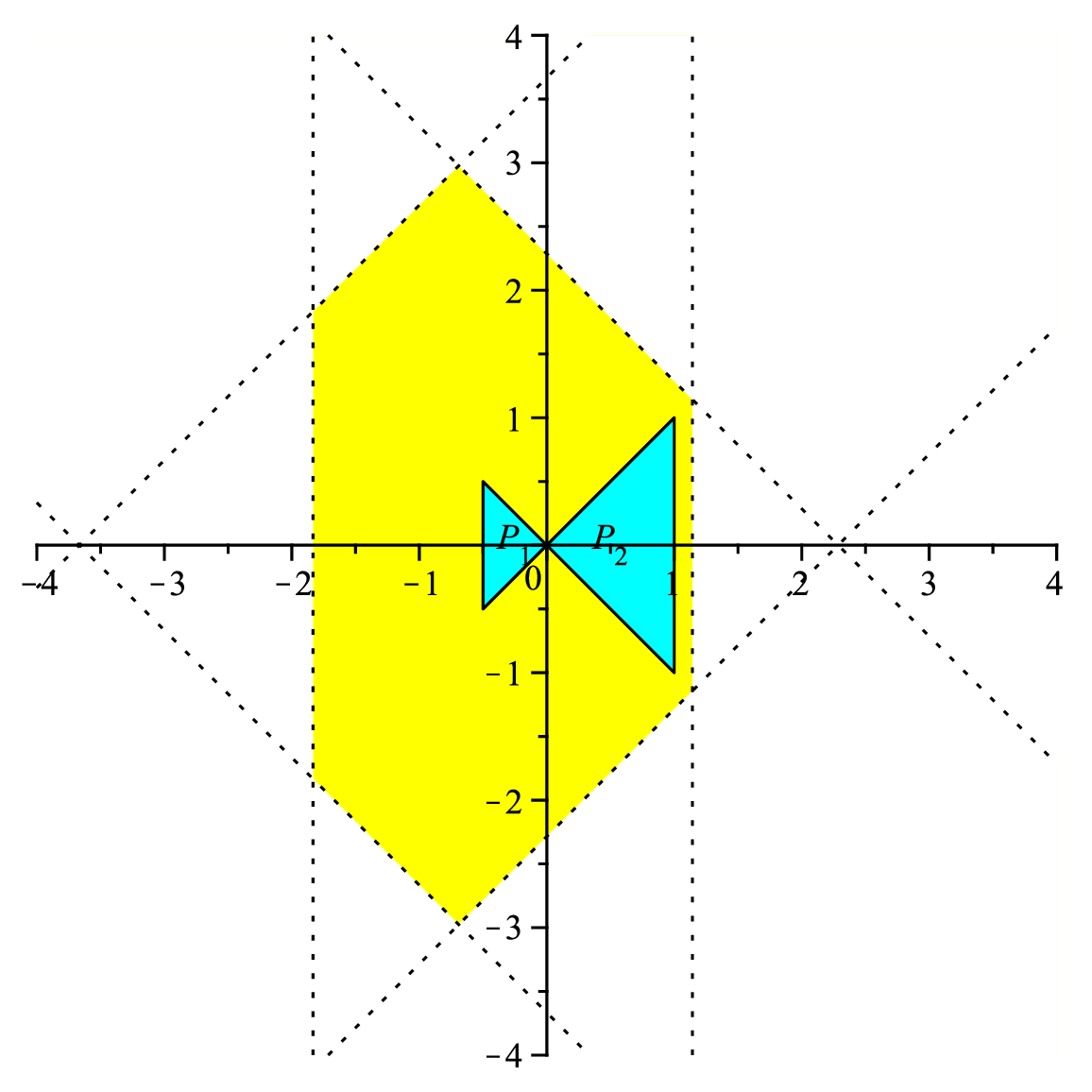}\\
 \caption{Minkowski linear system, Example \ref{ex:minkowski}.
   \emph{Left}, $\p_1+\p_2$.  \emph{Right}, $\frac{11}{3}\p_1+\frac{8}{7}\p_2$. }\label{fig:minkowski}
 \end{center}
\end{figure}

\begin{example}\label{ex:minkowski}
  \let\frac=\tfrac
We consider the Minkowski linear system generated by two triangles
$\p_1=((0,0),(-\tfrac12,\tfrac12), (-\tfrac12,-\tfrac12))$  and  $\p_2= ((0,0), (1,-1),\allowbreak (1,1))$;
see Figure~\ref{fig:minkowski}. Then $t_1\p_1+\t_2\p_2$ is the hexagon $\p(\alpha, t_1b^1+ t_2b^2)$, with
\begin{align*}
\alpha &=(x_1,x_1+x_2,-x_1+x_2,-x_1,-x_1-x_2,x_1-x_2 ),\\
 b^1 &= (0,0,1,\frac12,1,0), \\
 b^2 &= (1,2,0,0,0,2).
\end{align*}
Its vertices are $[t_2,t_2],[-\frac12t_1+t_2,\frac12t_1+t_2],[-\frac12t_1,\frac12t_1],[-\frac12t_1,-\frac12t_1],[-\frac12t_1+t_2,-\frac12t_1-t_2],[t_2,-t_2]$.

\begin{enumerate}[\rm(i)]
\item
The volume of $t_1\p_1+\t_2\p_2$ is
$$E_{[2]}(t_1,t_2)=\frac{1}{4}t_1^2+2t_1t_2+t_2^2.$$
\item The number $S(t_1,t_2)$ of lattice points in $t_1\p_1+\t_2\p_2$ is
$$S(t_1,t_2)=E_{[2]}(t_1,t_2)+E_{[1]}(t_1,t_2)+E_{[0]}(t_1,t_2)$$
where $E_{[1]}(t_1,t_2)$ is of polynomial degree $1$ and $E_{[0]}(t_1,t_2)$ is of
polynomial degree $0$, which are given by
\begin{align*}
E_{[1]}(t_1,t_2)&=(1-\{\frac{t_1}{2}\}-\{2t_2\})t_1+ (2-2\{t_1\}-2\{t_2\})t_2\\
\intertext{and}
E_{[0]}(t_1,t_2)&=1
-\{t_2\}^2-\{2t_2\}^2+2\{t_1\}\{\frac{t_1}{2}\}+2\{\frac{t_1}{2}+t_2\}\{t_1\} \\
&\qquad -\{\frac{t_1}{2}\}^2  -2\{\frac{t_1}{2}+t_2\}^2-
\{t_1\}-\{2t_2\}-\{t_1\}^2\\
&\qquad +2\{2t_2\}\{\frac{t_1}{2}+t_2\}+2\{t_2\}\{2t_2\}.
\end{align*}
\item
Similarly, if $L$ is the vertical line, the sum of the lengths of vertical segments is given by the quasi-polynomial function
$$S^L(t_1,t_2)=E_{[2]}(t_1,t_2)+E_{[1]}^L(t_1,t_2)+E_{[0]}^L(t_1,t_2)$$
with
\begin{align*}
E_{[1]}^L(t_1,t_2)&=(\frac{1}{2}-\{\frac{t_1}{2}\})t_1+(1-2\{t_2\})t_2,\\
E_{[0]}^L(t_1,t_2)&=
-\{-\frac{t_1}{2}+t_2\}-\{\frac{t_1}{2}-t_2\}+\{t_2\}+\{\frac{t_1}{2}-t_2\}^2
\\
&\qquad +\{\frac{t_1}{2}\}-\{\frac{t_1}{2}\}^2-\{t_2\}^2
+\{-\frac{t_1}{2}+t_2\}^2.
\end{align*}
\end{enumerate}
\end{example}

In contrast to the typical settings in the literature, 
we can allow the polytopes $\p_i$ to be merely \emph{semi-rational}, i.e., 
the facets of $\p_i$ are parallel to rational hyperplanes, 
whereas the vertices are allowed to be arbitrary real points in~$V$.

\begin{example}\label{ex:irrational-rectangle}
Let $\p$ be the rectangle $0\leq x\leq \sqrt{2}$,
$0\leq y\leq 1$ (see Figure~\ref{fig:irrational-rectangle}),
a semi-rational polytope.\footnote{The reader is invited to follow the
  examples using our Maple programs, available at
  \url{https://www.math.ucdavis.edu/~latte/software/packages/maple/} and 
  as part of \emph{LattE integrale},
  version 1.7.2 and later. 
}
For $t\in \R_{\geq 0}$,
the number of lattice points in $t\p$ is
$$
S(t\p,1) = \bigl(t-\{t\}+1\bigr)\bigl(\sqrt{2}\,t-\{\sqrt{2}\,t\}+1\bigr) 
= E_2(t)\, t^2 + E_1(t)\, t + E_0(t)
$$
with coefficient functions 
\begin{align*}
  E_2(t) &= \sqrt{2},\\
  E_1(t) &= -\sqrt{2}\,\{t\} - \{\sqrt{2}\,t\} + \sqrt2 + 1, \\
  E_0(t) &= \bigl(1-\{t\}\bigr) \bigl(1-\{\sqrt{2}\,t\}\bigr).
\end{align*}
Since the formulas of these functions involve both the rational step-linear function $\{t\}$ and
the irrational step-linear function $\{\sqrt{2}\,t\}$, 
the coefficient functions are not periodic in~$t$, but merely bounded
functions of~$t$.  Functions of this type generalize quasi-polynomials and are
called \emph{semi-quasi-polynomials} (the precise definition appears in
section~\ref{sub:quasipoly}). 
In the example, the function $S(t\p,1)$ is constant on the intersections
$\mathopen]m,m+1\mathclose[ \cap \mathopen]\frac{n}{\sqrt{2}},\frac{n+1}{\sqrt{2}}\mathclose[$  for $m, n$ positive integers.
\end{example}

More examples of  (semi-)quasi-polynomial functions  $S^L(t\p,h)$ will be given later.

\subsection{Second contribution: Two families of approximating multi-parameter quasi-polynomials}
We study the terms of highest polynomial degree of $S(\p(b),h)$.
We consider a \emph{patched weighted sum}, i.e., a particular linear combination of
intermediate weighted sums, for a  finite family $\CL_k^{\Barvi}$ of subspaces~$L$
 which was introduced by Barvinok in
 \cite{barvinok-2006-ehrhart-quasipolynomial} (see section~\ref{s:Lk-Barvinok}
 below for a definition)
 .  We thus obtain also a   function of $b$ which is given by a quasi-polynomial on each chamber.
\begin{equation}\label{eq:abstract-barvinok}
S^{\CL_k^{\Barvi}}(\p(b),h)= \sum_{L\in \CL_k^{\Barvi}} \rho(L) S^L (\p(b),h)
\end{equation}
(where the constants $\rho(L)$ are defined  in section \ref{sub:patch}).

Furthermore, we introduce a new quasi-polynomial function of $b$,  defined in a refined way by
linear combinations of intermediate generating functions for the cones at vertices of $\p(b)$.
We denote it $S^{k,\ConeByCone}\allowbreak(\p(b),h)$.  This function is canonically defined due to
the surprising analyticity of the cone-by-cone patched generating function
(Proposition~\ref{prop:cone-by-cone-is-analytic}). 

Our next main result is Theorem \ref{th:cone-by-cone-parametric-Ehrhart}.
We show that on each chamber the three quasi-polynomials, $b\mapsto S(\p(b),h)$, $b\mapsto
S^{\CL_k^{\Barvi}}(\p(b),h)$ and 
$b\mapsto S^{k,\ConeByCone}\allowbreak(\p(b),h)$, have the same terms corresponding to the
$k+1$ highest polynomial degrees.  This result  generalizes  Barvinok \cite{barvinok-2006-ehrhart-quasipolynomial} in several ways. Besides the introduction of the new $S^{k,\ConeByCone}(\p(b),h)$, we allow any polynomial weight $h(x)$, while Barvinok considered only $h(x)=1$,
 and we write formulas in terms of quasi-polynomial functions of the real-valued multi-parameter $b$, while Barvinok considered a single polytope dilated by a positive integer.

  $S^{k,\ConeByCone}(\p(b),h)$
and  $S^{\CL_k^{\Barvi}}(\p(b),h)$ involve subspaces $L$ of
 codimension  $\leq k$. For such an $L$,
  the computation of $S^L(\p(b),h)$
  involves discrete sums over  lattice points of semi-rational cones of dimension
 $\leq k$. For this reason, the  quasi-polynomials $S^{\CL_k^{\Barvi}}(\p(b),h)$ and
 $S^{k,\ConeByCone}(\p(b),h)$ are easier to compute than the original Ehrhart  quasi-polynomial $S(\p(b),h)$. Moreover,  $S^{k,\ConeByCone}(\p(b),h)$ is easier to compute than $S^{\CL_k^{\Barvi}}(\p(b),h)$.

Finally, when $\p(b)$ is a simplex, we give an explicit formula
for the coefficients $\rho(L)$ of the particular linear combination  \eqref{eq:abstract-barvinok} of  intermediate weighted sums used in Barvinok's approximation
$S^{\CL_k^{\Barvi}}(\p(b),h)$ (Proposition~\ref{prop:patching-simplex}),
using an explicit formula for  a M\"obius function that was obtained by
A.~Bj\"orner and L.~Lov\'asz in a different context \cite{MR1243770}.

\medbreak
The precise statements of our main results will lead to explicit algorithms for
computing both quasi-polynomials $S^{\CL_k^{\Barvi}}(\p(b),h)$ and
$S^{k,\ConeByCone}\allowbreak(\p(b),h)$ and thus the terms corresponding to the highest
$k+1$ polynomial degrees of $S(\p(b),h)$.
In various interesting settings,
for fixed~$k$ and fixed chamber,
%
%
%
%
as a corollary of our results, one can develop polynomial time algorithms to compute
$S^{\CL_k^{\Barvi}}(\p(b),h)$ and $S^{k,\ConeByCone}(\p(b),h)$; and thus
two types of polynomial time algorithms to compute the terms
corresponding to the highest expected
$k+1$ polynomial degrees of $S(\p(b),h)$ for a parametric  simplex.
Such polynomial time algorithms were the initial motivation
for the study of these intermediate sums in our papers.
However, in the present paper, in contrast to our previous papers~\cite{baldoni-berline-deloera-koeppe-vergne:integration,so-called-paper-1,so-called-paper-2},
we suppress detailed statements of such algorithms and their complexity.

\tgreen{I have removed (commented out) the complexity discussion per Michele's
  comment 2014-08-30.} 
\tgreen{I have now also removed the subsection title ``third contribution''
  entirely and gotten rid of the complexity theorems in the last section. --Matthias}
\medbreak

We end our article by some explicit computations (obtained via a simple Maple program)
of the quasi-polynomials $S^{k,\ConeByCone}(t\p,1)$
and  $S^{\CL_k^{\Barvi}}(t\p,1)$
for a dilated rational simplex $\p$ in  dimension $d\leq 4$, for $k\leq d$.
For $k=0$ they both give the volume $t^d \vol(\p)$ and for $k=d$ they both give the number of lattice points of $t\p$.
For $1\leq k\leq d-1$,
$S^{\CL_k^{\Barvi}}(t\p,1)$ and $S^{k,\ConeByCone}(t\p,1)$ have the same $k+1$ highest degree coefficients, but we see that
they are actually different quasi-polynomials.
It is not clear to us which one is the ``best.''


\subsection{Techniques of this paper}

Let us give the main ideas of our proofs.

We use the Brianchon--Gram decomposition of a
 polytope $\p(b)$ as a signed sum of its supporting cones
 and the corresponding Brion formula,
 reducing the study of intermediate weighted sums $S^L(\p(b),h)$ to that of intermediate generating functions $S^L(s+\c)(\xi)$
 over tangent cones  at vertices, which are defined in a similar way, by replacing  $h(x)$ with  an exponential function  $x\mapsto \e^{\la\xi,x\ra}$.
For $b$ in  a chamber $\tau$, the vertices of $\p(b)$ can be indexed, $s_1(b),\dots, s_r(b)$,  in such a way that
 for each index $j$,  the cone of feasible directions at vertex $s_j(b) $ does not depend on $b$.
 Furthermore, the vertex $s_j(b)$ depends linearly on the parameter $b$.
  This fact is the basis of all the  constructions and results of the present paper.
  We use our previous results on semi-rational affine polyhedral cones in \cite{SLII2014} and \cite{so-called-paper-2},
  where we studied  in detail the intermediate generating functions of a shifted cone $S^L(s+\c)(\xi)$
  (where $\c$ is a fixed polyhedral rational   cone)   as a function of  $s$ and~$\xi$.
  Theorem \ref{th:cone-by-cone-parametric-Ehrhart} uses the results of
  \cite{SLII2014} on the bidegree structure, i.e., the interplay between the
  local degree with respect to $s$ and the homogeneous degree with respect to~$\xi$. 
  Also,  as in  Barvinok's Fourier inversion method in \cite{barvinok-2006-ehrhart-quasipolynomial},  the  Poisson summation formula for   $S^L(s+\c)(\xi)$  obtained in \cite{SLII2014} is crucial to the proof of Theorem~\ref{th:cone-by-cone-parametric-Ehrhart}.

\section{Intermediate weighted Ehrhart quasi-polynomials for parametric polytopes}

\subsection{Notations}

In this paper, $V$ is a  vector space over $\R$ of dimension~$d$. The running element of $V$ is denoted by $x$.
As usual, the dual vector space is denoted by~$V^*$. By $V^*_\C$ we denote the
complexified dual space.  The running element of $V^*$ or $V^*_\C$ is denoted
by $\xi$.

The vector space $V$ is endowed with a lattice  $\lattice$ that spans~$V$ (one says that $V$
is \emph{rational}).  We denote by $\lattice^*$ the dual lattice in $V^*$.
We denote by $V_\Q = \lattice\otimes \Q$ and $V^*_\Q=\lattice^*\otimes \Q$ the
sets of rational elements of $V$ and $V^*$, respectively. 
\tgreen{Added proper definition in previous sentence according to Michele's
  comment 2014-08-31.}
A subspace $L$ of $V$ is called
\emph{rational} if $L\cap \lattice $ is a lattice in $L$. If $L$ is a
rational subspace, the image of $\lattice$ in $V/L$ is a lattice in
$V/L$, so that $V/L$ is a rational vector space. The image of
$\lattice$ in $V/L$ is called the \emph{projected lattice}. It is denoted
by $\lattice_{V/L}$.  A rational space~$V$, with lattice~$\lattice$,
has a canonical Lebesgue measure $\mathrm dx=\mathrm dm_\Lambda(x)$, for which
$V/\lattice$ has measure~$1$.
We denote by $L^{\perp}\subset V^*$ the space of linear forms $\xi\in V^*$
which vanish on~$L$.

An  affine subspace of $V$ is called \emph{semi-rational} if it can be written
as  $s+L$ where  $L$ is a rational subspace and $s$ is any point of $V$.

The polyhedra of this study are subsets of $V$.  A polyhedron is the
intersection of a finite number of closed halfspaces.  A \emph{polytope} is a
compact polyhedron, not necessarily full-dimensional.

The faces of a polyhedron $\p$ can be of dimension $0$ (vertices), $1$ (edges), $\ldots$, $\dim(\p)-1$ (facets), and $\dim(\p)$ (the polyhedron $\p$ itself).
A \emph{wall} of a polyhedron $\p$ is a hyperplane $H$ such that $\p$ is on one side
of $H$ and $\dim(H\cap \p)=\dim (\p)-1$. Then $H\cap \p$ is called a
\emph{facet} of $\p$.
A  polytope $\p$ of dimension $d$ is called \emph{simple} if  each vertex $s$ belongs to exactly $d$ facets.

If  $\f\subset V$ is a polyhedron,
the subspace $\lin(\f)$ is defined as the linear subspace of $V$ generated by $p-q$  for $p,q\in \f$.
A polyhedron $\p$ is called \emph{semi-rational} if $\lin(\f)$ is rational for all facets $\f$ of $\p$.

In this article, a \emph{cone} is a convex polyhedral rational  cone (with vertex $0$) and
an \emph{affine  cone} is the shifted set $s+\c$ of a  rational cone $\c$ for some $s\in V$.
A cone~$\c$ is called \emph{pointed} if it does not contain a line.
A  cone $\c$ is called \emph{simplicial} if it is  generated by linearly independent
elements of $V $. A simplicial cone~$\c$ is called \emph{unimodular} if it
is generated by independent lattice vectors $v_1,\dots, v_k$ such
that $\{v_1,\dots, v_k\}$ is part of a basis of
$\lattice$. An affine cone~$s+\c$ is called \emph{pointed} (\emph{simplicial},
\emph{unimodular}, respectively) if the associated cone~$\c$ is.

The set of vertices of a polytope~$\p$ is denoted by $\CV(\p)$.
For each vertex $s$, the cone of feasible directions at $s$  is denoted by
$\c_s$.

When we speak of the \emph{parametric polytope} $\p(b)$,
the parameter space is  $\R^N$. Its running element is denoted by $b=(b_1,b_2,\ldots,b_N)$.
We  denote by $e_j$ the canonical basis of $\R^N$, and write also $b=\sum_{j=1}^N b_j e_j$.

The indicator function of a subset $E$ is  denoted by $[E]$.

For  $t\in \R$, we denote by $\{t\}\in [0,1[$ the fractional part of $t$. Then $t-\{t\}$ is an integer.

\subsection{Intermediate weighted sums and generating functions on polyhedra}\label{subsection:polyhedra}
Let  $\p\subset V$ be a semi-rational polytope, $L$ a rational subspace of $V$
and let $h$ be a polynomial function on  $V$. We are interested in the properties of the \emph{intermediate weighted sum} on $\p$,
\begin{equation}\label{eq:abstract-first}
 S^L(\p,h)=\sum_{y\in \lattice_{V/L}} \int_{\p\cap (y+L)} h(x)\,\mathrm dx.
\end{equation}
Here, $\lattice_{V/L}$ is the  projected  lattice
and $\mathrm dx$ is the Lebesgue measure on $y+L$ defined by the intersection lattice $L\cap\lattice$.
Thus we integrate over  the intersections of $\p$  with  subspaces parallel to a fixed rational subspace $L$ through all lattice points, and sum the integrals.

Although  $S^L(\p,h)$ depends on the lattice~$\Lambda$, we do not indicate this dependence  in the notation.

$S^L(\p,h)$ interpolates between the integral
$$
I(\p,h)= \int_\p h(x) \,\mathrm dx
$$
of $h$ on $\p$,
which corresponds to $L=V$,  and the discrete weighted sum
$$
S(\p,h)= \sum_{x\in \p\cap \lattice}h(x),
$$
which corresponds to $L=\{0\}$.

Consider, instead of the polynomial function $h(x)$, the exponential function $\e^{\la
\xi,x\ra}$.
Thus we define the following holomorphic function of $\xi$:
\begin{equation}\label{eq:SL}
S^L(\p)(\xi)= \sum_{y\in \lattice_{V/L}} \int_{\p\cap (y+L)} \e^{\la
\xi,x\ra}\,\mathrm dx.
\end{equation}
Following the method initiated by Barvinok,
the study of the generating function $S^L(\p)(\xi)$  reduces to the
computation of the similar functions  $S^L(\u)(\xi)$, where $\u$ are affine
cones, see \cite{bar,baldoni-berline-deloera-koeppe-vergne:integration,so-called-paper-1}, etc.
However, for an arbitrary non-compact polyhedron~$\p$, the above definition  \eqref{eq:SL} of  $S^L(\p)(\xi)$
 makes sense only as a generalized function of~$i\xi$.
To avoid the use of distributions, and
 stay in an algebraic context,
we will define  $S^L(\p)(\xi)$, for any semi-rational polyhedron~$\p$ as a meromorphic function of~$\xi$, satisfying a valuation property.

Let us recall the notations of~\cite[Definitions 2.1--2.3]{SLII2014}. 
\begin{definition}\label{def:Mell}~
  \begin{enumerate}[\rm(a)]
  \item We denote by $\CM_{\ell}(V^*)$ the ring of meromorphic functions
    around $0\in V^*_\C$ which can be written as a quotient
    $\frac{\phi(\xi)}{\prod_{j=1}^{N}  \la\xi,w_j\ra}$, where $\phi(\xi)$ is holomorphic
    near $0$ and $w_j$ are non-zero elements of $V$ in finite number.
    (The subscript $\ell$ is mnemonic for the linear forms
      that appear in the denominator.)
  \item
    We denote by ${\mathcal R}_{[\geq m]}(V^*)$ the space of rational
    functions which can be written as $\frac{P(\xi)}{\prod_{j=1}^{N}
      \la\xi,w_j\ra}$, where $P$ is a homogeneous polynomial of degree greater
    or equal to $m+N$.  These rational functions are said to be
    \emph{homogeneous of degree at least~$m$}.
  \item We denote by ${\mathcal R}_{[ m]}(V^*)$ the space of rational
    functions which can be written as $\frac{P(\xi)}{\prod_{j=1}^{N}
      \la\xi,w_j\ra}$, where $P$ is homogeneous of degree $m+N$.
    These rational functions are said to be \emph{homogeneous of degree~$m$}.
  \end{enumerate}
\end{definition}
\begin{definition}
  For $\phi\in \CM_{\ell}(V^*)$, the \emph{homogeneous component} $
  \phi_{[m]}$ of degree~$m$ of~$\phi$ is defined by considering $\phi(\tau
  \xi)$ as a meromorphic function of one variable~$\tau\in\C$, with Laurent
  series expansion
 $$
 \phi(\tau \xi) = \sum_{m\geq m_0} \tau^m \phi_{[m]}(\xi).
 $$
\end{definition}
Thus $\phi_{[m]}\in {\mathcal R}_{[m]}(V^*)$.

\begin{definition}
  An $\CM_{\ell}(V^*)$-valued \emph{valuation} on the set of semi-rational
  polyhedra $ \p\subseteq V$ is a map $F$ from this set to the vector space
  $\CM_{\ell}(V^*)$ such that whenever the indicator functions $[\p_i]$ of a
  family of polyhedra $\p_i$ satisfy a linear relation $\sum_i r_i [\p_i]=0$,
  then the elements $F(\p_i)$ satisfy the same relation
$$
\sum_i r_i F(\p_i)=0.
$$
\end{definition}

We recall the following result from \cite[Proposition~19]{so-called-paper-1} (see
also \cite[Proposition~3]{so-called-paper-2} and \cite[Proposition~2.4]{SLII2014}).

\begin{proposition}
  Let $L\subseteq V$ be a rational subspace. There exists a unique valuation
  which associates a meromorphic function $S^L(\p)(\xi)$ belonging to
  $\CM_{\ell}(V^*)$ to every semi-rational polyhedron $\p\subseteq V$, so that
  the following properties hold:

  \begin{enumerate}[\rm(i)]
  \item  If $\p$ contains a line, then $S^L(\p)=0$.

  \item
    $$S^L(\p)(\xi)= \sum_{y\in \lattice_{V/L}} \int_{\p\cap (y+L)} \e^{\la
      \xi,x\ra}\,\mathrm dx,$$ for every $\xi\in V^*$ such that the above sum
converges.
\end{enumerate}
\end{proposition}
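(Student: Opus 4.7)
The plan is to follow the strategy developed by Khovanskii--Pukhlikov and Barvinok, as used in the authors' earlier papers (in particular \cite{SLII2014}). The idea is to extend the convergent integral-sum formula in~(ii) from affine simplicial cones to all semi-rational polyhedra by means of a valuation argument, using the fact that the indicator functions of semi-rational polyhedra are generated, modulo those of polyhedra containing a line, by indicator functions of simplicial affine cones.

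First I would establish uniqueness. The key input is Brion's decomposition: for any pointed semi-rational polyhedron $\p$, the indicator function $[\p]$ is congruent, modulo indicator functions of polyhedra containing a line, to the signed sum $\sum_{v\in\CV(\p)}[v+\c_v]$ of indicators of tangent cones. Iterating with a simplicial subdivision of each $\c_v$, together with inclusion--exclusion (again modulo cones with lines), one reduces $[\p]$ to a signed combination of indicator functions of pointed simplicial affine cones $s+\c$ plus a combination of indicators of polyhedra containing lines. For each simplicial affine cone $s+\c$ with generators $v_1,\dots,v_k$, the sum in~(ii) converges on the open set $\{\xi : \mathrm{Re}\la\xi,v_i\ra<0 \text{ for all }i\}$ and so determines $S^L(s+\c)(\xi)$ uniquely as a holomorphic function there, which in turn determines the meromorphic extension. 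Combined with property~(i) on cones with lines, this forces $S^L(\p)$ to be uniquely determined.

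For existence, I would give an explicit formula on simplicial pointed affine cones. Write $\c = \cone(v_1,\dots,v_k)$ and decompose $V=L\oplus L'$ (or use the projection $V\to V/L$). On $s+\c$, the sum in~(ii) separates into an exponential integral along the $L$-slices and a generating function of lattice points in the projection. For a unimodular $\c$, one obtains a closed-form product of rational functions of the type $\frac{1}{1-\e^{\la\xi,v\ra}}$ and $\frac{1}{\la\xi,w\ra}$, manifestly in $\CM_\ell(V^*)$; the simplicial but non-unimodular case is handled by further subdividing or by a residue calculus, as in the treatment of $S^L(s+\c)(\xi)$ in \cite{SLII2014}. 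Property~(ii) follows by construction on simplicial cones, and one extends to general semi-rational polyhedra by defining $S^L$ via the decomposition established for uniqueness.

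The main obstacle is the valuation property: one must show that the definition does not depend on the chosen decomposition, or equivalently that $S^L$ respects every linear relation $\sum r_i[\p_i]=0$ among indicator functions of semi-rational polyhedra. The standard way, which I would follow, is the Khovanskii--Pukhlikov method: it suffices to define $S^L$ on the larger class of all semi-rational affine cones (including non-pointed ones) in a manner that sends indicators of cones with lines to~$0$, and to verify the valuation property on this class. This is done by writing $S^L$ as the restriction of a linear map defined on a suitable space of functions (Laplace or Fourier transforms of indicator functions), where the valuation property is automatic. Once this is established, the extension to general semi-rational polyhedra is formal, and property~(ii) is inherited since both sides of~(ii) are meromorphic and agree on the open cone where the explicit sum converges.
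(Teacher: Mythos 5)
The paper does not actually prove this proposition: it is introduced with ``We recall the following result'' in the paragraph that explicitly recalls the notation of \cite{SLII2014}, so the statement is taken as known from the earlier papers rather than reproved here. There is thus no in-paper proof to compare against; your reconstruction has to be judged on its own.

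Judged on its own, your outline follows the standard Lawrence / Khovanskii--Pukhlikov / Brion valuation strategy, which is indeed how this circle of results is established in the cited references, so there is no conflict of approach. The uniqueness argument (Brion plus signed simplicial subdivision modulo cones with lines, then use the region of convergence of~(ii) on a pointed simplicial affine cone) and the existence argument (explicit product formula on unimodular affine cones, extend by linearity) are sound in outline. The step that remains a sketch rather than a proof is, however, the real technical content: the \emph{valuation} property, i.e.\ that $[\p]\mapsto S^L(\p)$ respects \emph{every} linear relation among indicator functions of semi-rational polyhedra, not merely the Brion and subdivision identities you exploit for uniqueness. You correctly name the standard remedy (realize $S^L$ as the restriction of a linear functional on a space of Fourier/Laplace transforms of indicator functions, where additivity is automatic), but for the intermediate sum $S^L$ this needs some care because one is mixing a continuous integration along the $L$-slices with a discrete summation over the projected lattice $\lattice_{V/L}$; one either constructs $S^L$ as a tempered-distribution Laplace transform of the measure $\sum_{y}\mathrm{d}m_{y+L}$ restricted to~$\p$, or fibers over $V/L$ and reduces to the two extreme valuations $I$ (on the fibers) and $S$ (on the base), each of which is classical. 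Filling in that step is exactly what the cited reference does; as a blind sketch, yours is on the right track but would need that detail supplied to be complete.
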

We remark that property~(i) above reflects the fact that the distribution
$t\mapsto \sum_{n\in \Z} \e^{in t}$ is supported on  $2\pi \Z$. 

Following \cite[section~3.4]{so-called-paper-1},
\cite[section~2.2]{so-called-paper-2} and \cite[Definition~2.5]{SLII2014}, the
function $S^L(\p)(\xi)$ will be called {\em the intermediate generating
  function} of the polyhedron $\p$.

\bigskip

If $\p$ is a polytope, then clearly the homogeneous component of degree $r$ of $S^L(\p)(\xi)$ is given by
$$S^L(\p)_{[r]}(\xi)=\sum_{y\in \lattice_{V/L}} \int_{\p\cap (y+L)} \frac{\la
\xi,x\ra^r}{r!}\,\mathrm dx.$$
Thus  the intermediate generating function  $S^L(\p)(\xi)$ of the polytope $\p$
allows us to compute $S^L(\p,h)$ for any polynomial $h$ by decomposing $h$ in
sums of powers of linear forms \cite[section~3.4]{baldoni-berline-deloera-koeppe-vergne:integration}.
\bigskip

The intermediate generating function  $S^L(\p)(\xi)$
interpolates between the integral (continuous generating function)
$$
I(\p)(\xi)= \int_\p \e^{\la \xi,x\ra} \,\mathrm dx
$$
which corresponds to $L=V$,  and the discrete sum (discrete generating function)
$$
S(\p)(\xi)= \sum_{x\in \p\cap \lattice}\e^{\la \xi,x\ra}
$$
which corresponds to $L=\{0\}$.

\medskip

 We introduced in \cite[Definition~2.18]{SLII2014} 
 the algebra  $\polypp\CP^{\Psi}(V)$
 of quasi-polynomial functions on $V$ defined as follows.
 (These definitions are analogues of the quasi-polynomial functions on $\R^N$
 defined in the introduction.)
 \begin{definition}
   Let $\Psi$ be a finite subset of elements of $V^*_\Q$.
   Define the algebra
   $\polypp^{\Psi}(V)$ of functions of $s\in V$ generated by the functions
   $s\mapsto \{\la\gamma,s\ra\}$ with $\gamma \in \Psi$.  A function $f$ will be
   called a \emph{(rational) step-polynomial function} on $V$, if $f$ belongs to
   $\polypp^{\Psi}(V)$ for some $\Psi$.
 \end{definition}
Such a function $f$ is bounded and periodic modulo
$q\Lambda$ for some integer $q$.

The algebra  $\polypp^{\Psi}(V)$ again has a natural filtration,
 where    $\polypp^{\Psi}_{[\leq k]}(V)$ is  the subspace spanned  by products of at most $k$ functions
$\{\la\gamma,s\ra\}$. The algebra
$\polypp\CP^{\Psi}(V)$ is the algebra generated by
$\polypp^{\Psi}(V)$ and ordinary  polynomial functions.
A function $f$ will be called a \emph{quasi-polynomial function} on $V$, if $f$ belongs to
$\polypp\CP^{\Psi}(V)$ for some $\Psi$.

\medskip
Let $\c$ be a fixed cone. The translated cone $s+\c$ is a semi-rational polyhedron.
Let $S^L(s+\c)_{[m]}(\xi)$ be the degree-$m$ homogeneous component of the functions $S^L(s+\c)(\xi)$.
The lowest homogeneous component of $S^L(s+\c)$ has  degree $-d$ and is equal
to $I(\c)(\xi)$, as proved in \cite[Theorem \ref{sl2:prop:homogeneous-ML}~(iii)]{SLII2014}.

 Remark that for any  $v\in \lattice+L $,
  $$
  S^L(v+s+\c)(\xi)= \e^{\langle \xi,v\rangle} S^L(s+\c)(\xi).
  $$

Thus the function
\begin{equation}\label{def:ML}
M^L(s,\c)(\xi)=\e^{-\langle \xi,s\rangle} S^L(s+\c)(\xi)
\end{equation}
is a function of $s\in V/(\lattice+L)$.

Let $M^L(s,\c)_{[m]}$ be the degree-$m$ homogeneous component of the function $M^L(s,\c)$.
 The function $s\mapsto M^L(s,\c)_{[m]}$
 is a periodic function of the variable $s\in V$ with values in the space $\CR_{[m]}(V^*)$.
 Results of \cite{SLII2014} on the nature of this function will be used in section \ref{sub:quasipoly}.

Note the obvious, but fundamental formula:

\begin{equation}\label{eq:MversusS}
 S^L(s+\c)_{[m]}(\xi)=\sum_{r=0}^{m+d} M^L(s,\c)_{[m-r]}(\xi) \frac{\la \xi,s\ra^r}{r!}.
\end{equation}

\begin{remark}
  Equation~\eqref{eq:MversusS} expresses the function
$s\mapsto S^L(s+\c)_{[m]}(\xi)$ as a
quasi-polynomial function
 of the variable $s\in V$ with values in the space $\CR_{[m]}(V^*)$.
Indeed  $s\mapsto M^L(s,\c)_{[m-r]}(\xi)$ is a periodic function on~$V$
given by a step-polynomial formula, while $s\mapsto \frac{\la \xi,s\ra^r}{r!}$ is a polynomial function of $s$.
\end{remark}

 The lowest degree homogeneous component of $M^L(s,\c)$ is the same as that of $ S^L(s+\c)$, hence of degree $-d$ and  equal to $I(\c)(\xi)$.
\begin{equation}\label{eq:lowest}
M^L(s,\c)_{[-d]}=I(\c)(\xi).
\end{equation}

When $L=\{0\}$, we write $S(s+\c)$ instead of $S^{\{0\}}(s+\c)$
and $M(s,\c)$ instead of $M^{\{0\}}(s,\c)$.
\subsection{Parametric polytopes}
\subsubsection{Chambers}
First, we recall some well known notions about parametric polytopes (cf.\ for instance \cite{MR0326574}).
Let $\alpha=(\alpha_1,\alpha_2,\ldots,\alpha_N)$ be a list of  $N$ elements of $\Lambda^*$
such  that the cone generated by  $\alpha$ is the whole space $V^*$.
 For
 $b=(b_1,b_2,\ldots,b_N)\in \R^N$, let
 $$
\p(\alpha,b)=\{\,x\in V : \la \alpha_j,x\ra\leq b_j,\, j=1,\ldots,N\,\}.
$$
Then $\p(\alpha,b)$ is a semi-rational polytope.
We will often denote it simply by $\p(b)$.

It is clear that if we dilate the parameter $b$ in $tb$ with $t\geq 0$, we obtain the dilated polytope $t\p( b).$
\begin{equation}\label{eq:tb}
t\p(b)=\p(tb).
\end{equation}

We denote also  by $\alpha$ the  map  $V\to \R^N$  given by $$\alpha(x)=(\la \alpha_1,x\ra, \la \alpha_2,x\ra,\ldots, \la \alpha_N,x\ra).$$

If $v_0\in V$, the shifted polytope  $\p(b)+v_0$ is given by
\begin{equation}\label{eq:b-modmuV}
\p(b)+v_0=\p(b+\alpha(v_0)).
\end{equation}
Hence, for many aspects of parametric polytopes, the relevant space of parameters is not $\R^N$ itself,
 but the quotient space $\R^N/\alpha(V)$, (see Remark \ref{rem:parametric-versus-partition}). For instance, $\p(b)$ is not empty if and only if
 $b$ lies in  the  closed cone generated by the standard basis $e_i$  of $\R^N $ and the subspace $\alpha(V)$.
Indeed, for $x\in \p(b)$, $b=\sum_j (b_j-\la \alpha_j,x\ra) e_j+\alpha(x)$.

\begin{example}\label{ex:parametric-dim1}
    The simplest example is $V=\R$ with $\alpha_1=x, \alpha_2=-x$. Thus
    $    \p(b)=\{x \in \R, x\leq b_1, -x\leq b_2\}$.
 If $b_1+b_2<0$, $ \p(b)$ is empty, otherwise it is the interval $[-b_2,b_1]$.
\end{example}
\begin{example}[Figure \ref{fig:parametric-dim2}]\label{ex:parametric-dim2}
Let $V=\R^2$ and $\alpha =(-x_1,-x_2,x_1+x_2, -x_1+x_2)$.
Then $\p(b)$ is defined by the inequalities
$$ -x_1\leq  b_1,\quad -x_2\leq  b_2,\quad x_1+ x_2\leq b_3,\quad -x_1+x_2\leq  b_4.
$$
Figure \ref{fig:parametric-dim2} displays  this polygon for three values of $b$, one for each of  the three  combinatorial types which occur in this example. (See below Example \ref{ex:parametric-dim2-chambers}).
\end{example}
\begin{figure}
\begin{center}
 \includegraphics[width=4cm]{mirage1.eps}  \includegraphics[width=4cm]{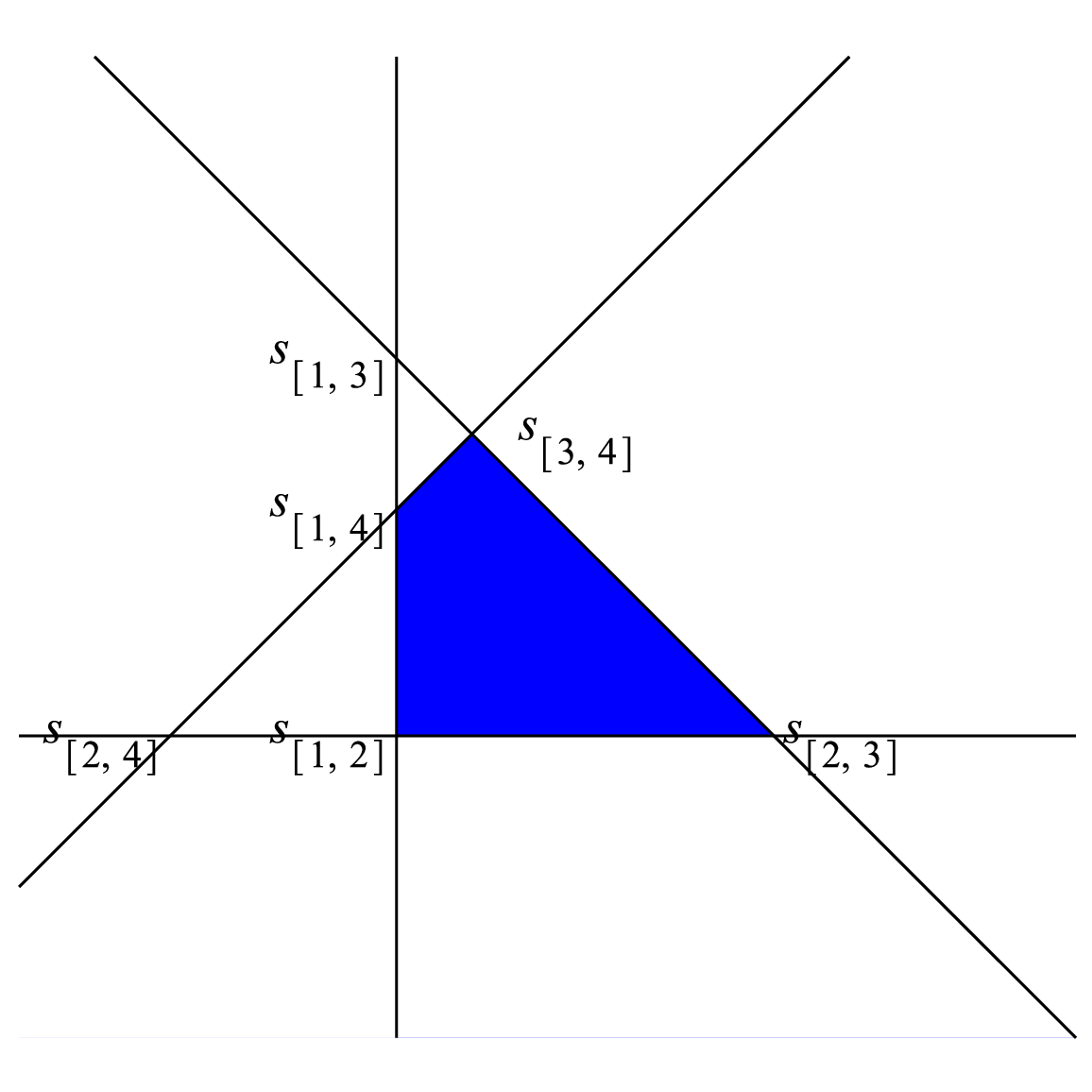} \includegraphics[width=4cm]{mirage3.eps}
 \caption{Example \ref{ex:parametric-dim2}, $\p(b)$ for $b=(2,0,0,6), (0,0,5,3), (0,3,3,0)$. }\label{fig:parametric-dim2}
 \end{center}
\end{figure}
When $b$ varies, the combinatorial type of $\p(b)$ changes.
The \emph{$\alpha$-chambers} $\tau\subset \R^N$, defined below, are  conical open subsets
such that  the  combinatorial type of $\p(b)$ does not change when $b$ runs over $ \tau$. First, we want to index the vertices of $\p(b)$. For this purpose, we introduce the following subsets of $\{1,\dots,N\}$.
\begin{definition}\label{def:mubasis}
 $\CB$ is the set of subsets  $B\subseteq \{1,\dots,N\}$
   such that $(\alpha_j, j\in B) $ is a basis of $V^*$.

For $B\in \CB$,  the linear map
$ b\mapsto s_B(b): \R^N \to V$ is defined by
$$
\langle\alpha_j,s_B(b)\rangle=b_j \mbox{ for } j\in B.
$$
For $B\in \CB$,  the cone $\c_B\subset V$ is defined by
$$
\c_B=\{\,x\in V : \langle\alpha_j,x\rangle\leq 0,  \mbox{ for } j\in B \,\}.
$$
\end{definition}
Thus, the point $s_B(b)$ is the intersection of the $d$ linearly independent hyperplanes
$\langle\alpha_j,s_B(b)\rangle=b_j$ for $j\in B$. The points $s_B(b), B\in \CB,$ are classically called the \emph{vertices} of the arrangement of hyperplanes $\langle\alpha_j,x\rangle =b_j$.

\begin{example}[Continuation of Example \ref{ex:parametric-dim2}, Figure \ref{fig:parametric-dim2}]\label{ex:parametric-dim2-vertices}
Here,  $\CB$ consists of all pairs $[i,j]$ with  $1\leq i<j\leq 4$. The six  points $s_B(b)$ are
   the intersections of the corresponding two lines:
\newline   $ s_{[1,2]}(b)=(-{ b_1},-{ b_2}), s_{{[1,3]}}(b)=(-{ b_1},b_1+ b_3),
 s_{{[1,4]}}(b)=(-{ b_1},{ -b_1}+{ b_4})$,
\newline $s_{[2,3]}(b)=(b_2+b_3,- b_2), s_{[2,4]}(b)=( -b_2- b_4,-{b_2})$,
\newline  $ s_{{[3,4]}}(b)=(\frac{1}{2}(b_3-b_4) ,\frac{1}{2}(b_3+b_4))$.
\newline%
The six cones $\c_B$ (up to a shift) appear as the tangent cones of
the vertices of the three example polytopes~$\p(b)$ in Figure~\ref{fig:parametric-dim2}.
\end{example}

The actual vertices of the polytope $\p(b)$ are exactly those points $s_B(b)$ which satisfy the remaining inequalities $\langle\alpha_k,s_B(b)\rangle\leq b_k$ for $k\notin B$.
Thus, for $B\in \CB$, we consider the  following cone in $ \R^N$.
\begin{equation}\label{eq:cones-for-chambers}
 \d_B=\{\, b\in \R^N :  b_k-\langle \alpha_k,s_B(b)\rangle\geq 0, \mbox{  for } k\notin B \,\}.
\end{equation}

\begin{lemma}\label{lemma:cones-for-chambers}
    Let $e_j, 1\leq j\leq N$ be the standard basis of $\R^N$, and let $\phi_j$ be the
    projection of $e_j$ on $\R^N/\alpha(V)$ for $1\leq j\leq N$.

    \begin{enumerate}[\rm(i)]
    \item The cone $\d_B$ coincides with $\alpha(V)+\sum_{k\notin
        B}\R_{\geq 0}e_k.$

    \item The projection of $\d_B$ on $\R^N/\alpha(V)$ is the
      simplicial cone with generators $\phi_k$, $k\notin B$.
    \end{enumerate}
\end{lemma}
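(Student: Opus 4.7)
The plan is quite direct. For (i), the key identity is that for any $b\in\R^N$, the vector $b-\mu(s_B(b))$ has $j$-th coordinate zero whenever $j\in B$, because by definition of $s_B$ we have $\langle\mu_j,s_B(b)\rangle=b_j$ for $j\in B$. Hence
\begin{equation*}
b=\mu(s_B(b))+\sum_{k\notin B}\bigl(b_k-\langle\mu_k,s_B(b)\rangle\bigr)\,e_k,
\end{equation*}
which expresses every $b\in\R^N$ as an element of $\mu(V)$ plus a linear combination of the $e_k$ for $k\notin B$, with coefficients equal to the slack quantities $b_k-\langle\mu_k,s_B(b)\rangle$ that appear in the defining inequalities of $\d_B$. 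Thus $b\in\d_B$ if and only if these coefficients are nonnegative.

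To make this rigorous in both directions, I would first observe that $s_B$ is left-inverse to $\mu$ on the subspace $\mu(V)$: if $b=\mu(v)$, then $b_j=\langle\mu_j,v\rangle$ for $j\in B$, and because $(\mu_j)_{j\in B}$ is a basis of $V^*$ the unique solution of the linear system defining $s_B(b)$ is $v$, so $s_B(\mu(v))=v$. Linearity of $s_B$ then gives $s_B\bigl(\mu(v)+\sum_{k\notin B}\lambda_k e_k\bigr)=v+\sum_{k\notin B}\lambda_k s_B(e_k)$, and this allows me to compute the slacks directly: for $k\notin B$, the slack equals $\lambda_k$ plus a term that telescopes to zero. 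The forward inclusion $\d_B\supseteq\mu(V)+\sum_{k\notin B}\R_{\geq 0}e_k$ follows. The reverse inclusion is the decomposition written above with $\lambda_k=b_k-\langle\mu_k,s_B(b)\rangle\geq 0$.

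For (ii), applying the quotient map $\pi:\R^N\to\R^N/\mu(V)$ to the equality of (i) immediately yields $\pi(\d_B)=\sum_{k\notin B}\R_{\geq 0}\phi_k$. It remains to show this is a simplicial cone, i.e.\ that the $N-d$ vectors $\phi_k$ with $k\notin B$ are linearly independent in the $(N-d)$-dimensional space $\R^N/\mu(V)$ (noting that $\mu$ is injective because the $\mu_j$ span $V^*$). Suppose $\sum_{k\notin B}c_k e_k\in\mu(V)$, say equals $\mu(w)$. Reading off the coordinates indexed by $j\in B$ gives $\langle\mu_j,w\rangle=0$ for all $j\in B$, and since $(\mu_j)_{j\in B}$ is a basis of $V^*$ we conclude $w=0$, hence all $c_k=0$.

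I do not anticipate any serious obstacle here: everything reduces to exploiting the fact that $(\mu_j)_{j\in B}$ is a basis, first to show $s_B\circ\mu=\mathrm{id}_V$ (giving the decomposition for (i)) and then to show that the complementary coordinates are independent modulo $\mu(V)$ (giving the simpliciality in (ii)). The only point demanding slight care is making sure the slack quantities in the decomposition coincide with those used to define $\d_B$, which follows directly from the linearity of $s_B$.
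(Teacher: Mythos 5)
Your proof is correct and follows essentially the same approach as the paper: part (i) rests on the decomposition $b=\mu(s_B(b))+\sum_{k\notin B}\bigl(b_k-\langle\mu_k,s_B(b)\rangle\bigr)e_k$, which is exactly the formula the paper uses, and your argument for (ii) supplies the verification (linear independence of the $\phi_k$, $k\notin B$, being equivalent to that of the $\mu_j$, $j\in B$) that the paper merely asserts. The extra observations you make, such as $s_B\circ\mu=\mathrm{id}_V$ and $s_B(e_k)=0$ for $k\notin B$, are useful but not a genuinely different route.
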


\begin{proof}
Part (i) follows from the formula  $$b-\alpha(s_B(b))=\sum_{k\notin B} \bigl(b_k-\langle \alpha_k,s_B(b)\rangle\bigr)e_k.$$

For (ii), note that if $B$ is a subset of $d$ elements of $\{1,\dots,N\}$,
the linear forms $(\alpha_j,j\in B)$ are linearly independent if and only if the vectors $(\phi_k, k\notin B)$ are.
\end{proof}

\begin{definition}\label{def:chamber}
 An $\alpha$-chamber  $\tau\subset \R^N$ is a connected component of the complement of
 the union of the boundaries of the  cones $\d_B$
  \eqref{eq:cones-for-chambers}
 for all $B\in \CB$.
\end{definition}
Thus, for every $j\in \{1,\dots,N\}$ and $B\in \CB$,  the linear form $ b_j- \langle \alpha_j,s_B(b)\rangle$
keeps a constant sign on every chamber.

There is a unique chamber $\tau_e$ such that $\p(b)$ is empty for $b\in \tau_e$,
it is the complement of the cone  $\alpha(V)+\sum_{k=1}^N\R_{\geq 0}e_k$.
The other chambers are  contained in  $\alpha(V)+\sum_{k=1}^N\R_{\geq 0}e_k$, they are called \emph{admissible}.

Chambers  are open conical sets which contain the vector subspace $\alpha(V)$.
Thus the relevant sets are the projected chambers in  $\R^N/\alpha(V)$. They are easier to visualize.
\begin{lemma}\label{lemma:projected-chambers}
The projected chambers in $\R^N/\alpha(V)$ are the connected components
    of the complement of the union of the boundaries of the cones generated by
    $N-d$ linearly independent vectors among the $\phi_j$'s.
\end{lemma}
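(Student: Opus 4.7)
The plan is to show that the data defining the chambers in $\R^N$ is already $\mu(V)$-invariant, so that it descends cleanly to the quotient $\R^N/\mu(V)$, and then to identify the quotient picture with the one in the statement via the previous Lemma.

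First I would observe that by Lemma \ref{lemma:cones-for-chambers}(i), each cone $\d_B$ contains the linear subspace $\mu(V)$, hence satisfies $\d_B + \mu(V) = \d_B$. Writing $\pi\colon \R^N\to \R^N/\mu(V)$ for the quotient map, this means $\d_B = \pi^{-1}(\pi(\d_B))$, and therefore $\partial \d_B = \pi^{-1}(\partial \pi(\d_B))$. (This last equality is the standard fact that for a closed convex set $D$ saturated by a linear subspace $W$, the interior of $D$ is the preimage of the interior of $\pi(D)$, which follows from openness of $\pi$.)

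Next, I would take complements and unions: the set
$$
U := \R^N \setminus \bigcup_{B\in\CB}\partial \d_B
$$
equals $\pi^{-1}\bigl((\R^N/\mu(V)) \setminus \bigcup_{B\in\CB}\partial \pi(\d_B)\bigr).$ The map $\pi$ is an open continuous surjection whose fibers are translates of $\mu(V)$, hence connected. Under these hypotheses the connected components of $U$ are exactly the preimages under $\pi$ of the connected components of $\pi(U)$. Applying this to our $U$ shows that the projected chambers are precisely the connected components of the complement, in $\R^N/\mu(V)$, of $\bigcup_{B\in\CB}\partial \pi(\d_B)$.

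Finally I would invoke Lemma \ref{lemma:cones-for-chambers}(ii): for $B\in\CB$, $\pi(\d_B)$ is the simplicial cone generated by $\{\phi_k : k\notin B\}$, and the criterion at the end of the proof of that lemma states that the $d$-subsets $B\subset\{1,\dots,N\}$ with $(\mu_j)_{j\in B}$ linearly independent in $V^*$ correspond bijectively to the $(N-d)$-subsets $\{\phi_k\}_{k\notin B}$ that are linearly independent in $\R^N/\mu(V)$. Therefore, as $B$ ranges over $\CB$, the cones $\pi(\d_B)$ range precisely over the simplicial cones in $\R^N/\mu(V)$ generated by $N-d$ linearly independent vectors among the $\phi_j$'s. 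Substituting this into the characterization from the previous paragraph yields the claim.

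The only mildly delicate step is the second one — transferring connected components through $\pi$ — but this is immediate from openness of $\pi$ and connectedness of its fibers, so I do not expect any real obstacle; the rest is direct bookkeeping using the already established Lemma \ref{lemma:cones-for-chambers}.
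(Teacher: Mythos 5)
The paper states Lemma~\ref{lemma:projected-chambers} without proof, treating it as an immediate consequence of Lemma~\ref{lemma:cones-for-chambers}, and your argument is precisely the natural unpacking of that consequence: since every $\d_B$ is $\mu(V)$-saturated, the quotient map $\pi$ carries boundaries, complements and connected components faithfully, and Lemma~\ref{lemma:cones-for-chambers}(ii) identifies the images $\pi(\d_B)$ with the simplicial cones on $N-d$ independent $\phi_j$'s. Your proof is correct and matches the intended reasoning.
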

\begin{figure}
\begin{center}
 \includegraphics[width=4cm]{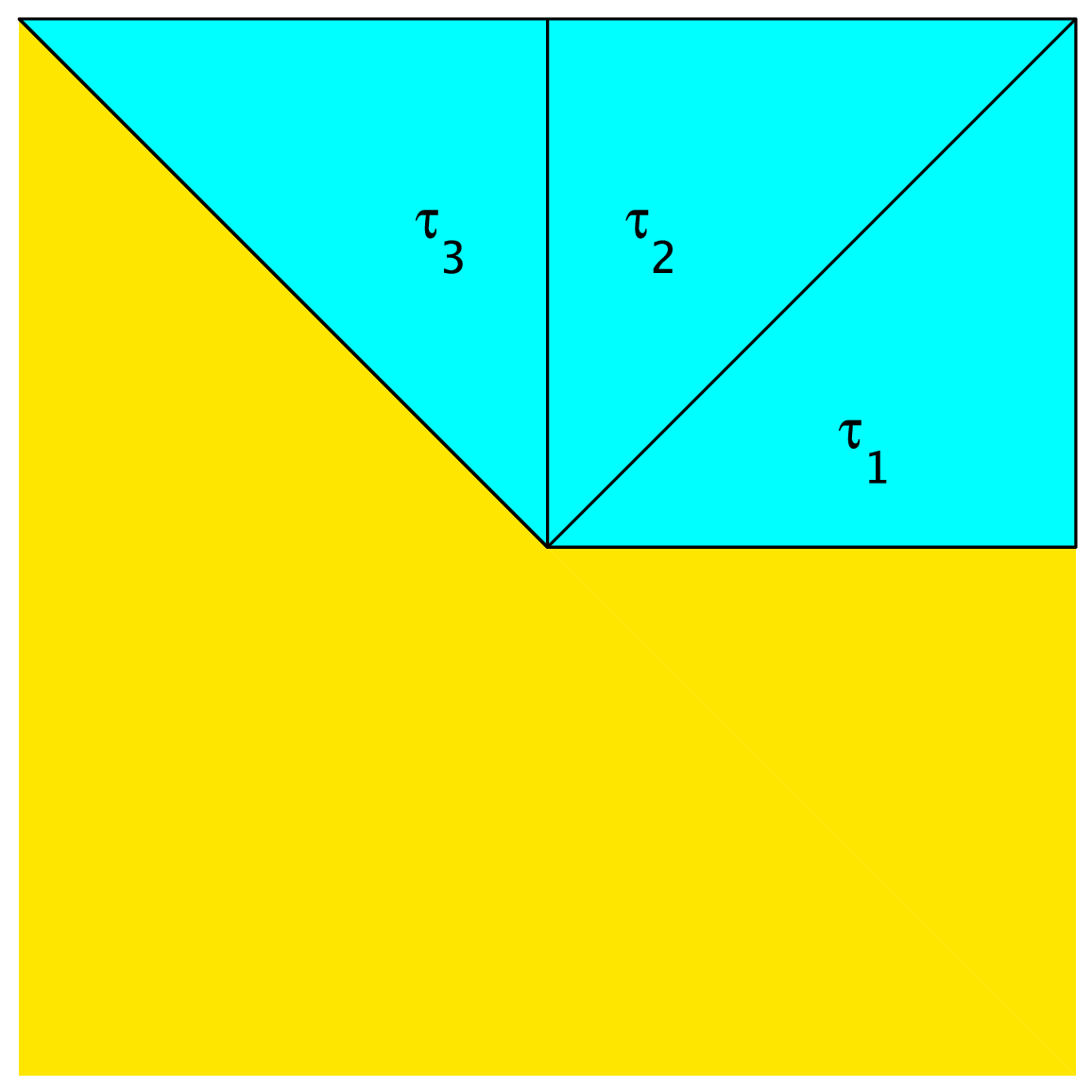}
 \caption{Four projected chambers in Example~\ref{ex:parametric-dim2-chambers}}\label{fig:parametric-dim2-chambers}
 \end{center}
\end{figure}
We have the following well-known  description of the walls, vertices and cones at vertices of $\p(b)$ when $b$ varies in  an $\alpha$-chamber.
\begin{proposition}\label{prop:vertices-chamber}
 Let $\tau$ be an $\alpha$-chamber. Let   $\CB_\tau$ be  the set of $B\in \CB$ such that $\tau$ is contained in the cone $\d_B$.
 Then, for $b\in\tau$, the following holds.

 \begin{enumerate}[\rm(i)]
 \item $\p(b)$ is simple.

 \item The hyperplane $\la \alpha_j,x\ra=b_j$ is a wall of $\p(b)$, i.e., its
   intersection with $\p(b)$ is a facet, if
   and only if $j$ belongs to some $B$ in $\CB_\tau$.

 \item The vertices of $\p(b)$ are the points $s_B(b)$ where $B$
   runs over $\CB_\tau$.

 \item For $B\in \CB_\tau$, the cone of feasible directions at
   vertex $s_B(b)$ of $\p(b)$ is the cone $\c_B=\{\,x\in V:
   \langle\alpha_j,x\rangle\leq 0 \mbox{ for } j\in B\,\}$. In particular, it
   depends only on $B$, not on $b$.
 \end{enumerate}
\end{proposition}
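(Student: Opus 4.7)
I would deduce all four statements from a single observation about which of the $N$ inequalities are tight at each candidate vertex $s_B(b)$ as $b$ varies over $\tau$. The key preliminary step is to make the condition $B \in \CB_\tau$ concrete. Since $\tau$ is a connected component of the complement of the union of boundaries of the cones $\d_B$ (over $B \in \CB$), and each defining linear form $b \mapsto b_k - \langle \mu_k, s_B(b)\rangle$ (for $k \notin B$) has constant sign on $\tau$, the inclusion $\tau \subseteq \d_B$ forces the strict inequality $b_k - \langle \mu_k, s_B(b)\rangle > 0$ for every $b \in \tau$ and every $k \notin B$. Thus, for $b \in \tau$ and $B \in \CB_\tau$, the point $s_B(b)$ lies in $\p(b)$, the $d$ constraints indexed by $B$ are tight at $s_B(b)$, and all other constraints are strictly slack.

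With this in hand I would prove (iii) and (iv) together. Since $(\mu_j)_{j\in B}$ is a basis of $V^*$, the $d$ tight constraints at $s_B(b)$ are linearly independent, so $s_B(b)$ is a vertex of $\p(b)$. As the non-$B$ inequalities are locally inactive, the cone of feasible directions at this vertex is $\{\,x \in V : \langle \mu_j, x\rangle \leq 0,\ j \in B\,\} = \c_B$, proving (iv). Conversely, any vertex $v$ of $\p(b)$ must admit $d$ linearly independent tight constraints, indexed by some $B \in \CB$; by construction $v = s_B(b)$. If some $k \notin B$ also gave a tight constraint at $v$, then $b_k - \langle \mu_k, s_B(b)\rangle = 0$, placing $b$ on the boundary of $\d_B$ and contradicting $b \in \tau$. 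Hence exactly the constraints in $B$ are tight at $v$ and $B \in \CB_\tau$, proving (iii). Since each vertex then has exactly $d$ tight constraints with linearly independent normals, $\p(b)$ is simple, which is (i).

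For (ii) I would exploit simplicity. If $j \in B$ for some $B \in \CB_\tau$, then the $d$ edges of $\p(b)$ emanating from $s_B(b)$ are in directions $v_1, \dots, v_d$ dual to $(\mu_i)_{i\in B}$, i.e.\ $\langle \mu_i, v_\ell\rangle = 0$ for $i \in B \setminus \{\ell\}$; the $d-1$ edges with $\ell \neq j$ lie in $H_j = \{\langle \mu_j, x\rangle = b_j\}$ and are linearly independent, so $\dim(\p(b) \cap H_j) \geq d-1$. Since $\p(b)$ lies in the half-space bounded by $H_j$, the dimension is exactly $d-1$ and $H_j$ is a wall. Conversely, a wall $H_j$ contains some vertex $s_B(b)$ of $\p(b)$, and tightness of the $j$-th constraint at this vertex forces $j \in B \in \CB_\tau$. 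The step I expect to require most care is verifying, via the dual basis argument just sketched, that each $j \in B \in \CB_\tau$ truly produces a $(d-1)$-dimensional facet rather than a lower-dimensional face; everything else is a bookkeeping of tight inequalities.
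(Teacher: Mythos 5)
The paper states this proposition as well-known and gives no proof, so there is no argument to compare against; your proof is the standard one and is correct. A couple of points are worth making fully explicit. The strictness of the non-$B$ inequalities at $s_B(b)$ for $b\in\tau$, $B\in\CB_\tau$ is the linchpin, and your justification via the constant-sign remark is fine provided one adds that the sign must in fact be strict: none of the affine functions $b\mapsto b_k-\langle\mu_k,s_B(b)\rangle$ with $k\notin B$ is identically zero (since $b_k$ is independent of $(b_j)_{j\in B}$, on which $s_B(b)$ alone depends), so a non-trivial affine function of constant sign on the open set $\tau$ cannot vanish there. Alternatively one can avoid the constant-sign remark altogether by noting, via Lemma~\ref{lemma:cones-for-chambers}, that $\d_B$ is full-dimensional with an irredundant description by these $N-d$ forms, so any point of $\d_B$ where one of them vanishes lies on the boundary of $\d_B$, which $\tau$ avoids by definition. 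For part~(i), the paper's definition of a simple polytope presupposes $\dim\p(b)=d$; this holds because $s_B(b)+\epsilon x_0\in\p(b)$ for $x_0$ in the interior of the full-dimensional simplicial cone $\c_B$ and $\epsilon>0$ small, and then the count ``exactly $d$ facets through each vertex'' follows since each facet through $s_B(b)$ determines a distinct tight index and you have shown the tight indices are exactly $B$. These are precisely the bookkeeping details you anticipated; the overall strategy of reading off the tight constraints at each $s_B(b)$, proving (iii) and (iv) first, then (i), then (ii) via the dual edge directions, is exactly right.
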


The projection $\tau/\alpha(V)$ of the chamber $\tau$ on $\R^N/\alpha(V)$ is the intersection
of all the simplicial cones,
generated by  subsets of  the $\phi_k$, and containing it.
Thus an equation $\alpha_j$ is redundant for $\p(b)$
if and only if the vector $\phi_j$ is an edge of {\bf all} the simplicial cones  $\d_B/\alpha(V)$, containing $\tau/\alpha(V)$.

\begin{example}[Continuation of  Example \ref{ex:parametric-dim2} and  Figure \ref{fig:parametric-dim2}]  \label{ex:parametric-dim2-chambers}
 Here,
$\Phi=(\phi_1,\phi_2, \phi_3=\frac{\phi_1+\phi_2}{2}, \phi_4=\frac{-\phi_1+\phi_2}{2})$.
There are four chambers,  whose projections are represented in Figure \ref{fig:parametric-dim2-chambers}. The projections of the three admissible chambers $\tau_i$, $i=1,2,3$ are colored in blue. The fourth chamber (for which $\p(b)$ is empty) is colored in yellow.

   $\tau_1$ is the cone $\{\, b : \sum_{j=1}^4 b_j\phi_j\in \R_{>0}\phi_1+ \R_{>0}\phi_3\,\}$.

   $\tau_2$ is the cone $\{\, b : \sum_{j=1}^4 b_j\phi_j\in \R_{>0}\phi_2+ \R_{>0}\phi_3\,\}$.

   $\tau_3$ is the cone $\{\, b : \sum_{j=1}^4 b_j\phi_j\in \R_{>0}\phi_2+ \R_{>0}\phi_4\,\}$.

For instance, $\tau_2$ is defined by the inequalities $-b_1+b_2+b_4>0, 2b_1+b_3-b_4>0$.

As we see on Figure \ref{fig:parametric-dim2},
for $b\in \tau_1$, $\p(b)$ is a triangle, 
for $b\in \tau_2$, it is a quadrilateral,
for $b\in \tau_3$, it is again a triangle.
%
%
%
We have $\CB_{\tau_1}=\{[2,3],[2,4],[3,4]\}$,
$\CB_{\tau_2}=\{[1,2],[1,4],[2,3],[3,4]\}$, and $\CB_{\tau_3}=\{[1,2],[1,3],[2,3]\}$.

The index $1$ does
not belong to the union of the sets $B$ when $B$ varies in $\CB_{\tau_1}$, thus the condition $-x_1\leq b_1$ is redundant
for the polytope $\p(b)$ when $b\in \tau_1$,
as seen on Figure \ref{fig:parametric-dim2}.

Similarly,  the condition $-x_1+x_2\leq b_4$ is redundant for the polytope $\p(b)$ when $b\in \tau_3$.

For $b\in \tau_2$, the four  equations $\la\alpha_j,x\ra=b_j$ define facets of the quadrilateral $\p(b)$.
\end{example}
If $b$ lies in the boundary of an admissible chamber  $\tau$, then all vertices of $\p(b)$ are of the form
$s_B(b)$, for $B\in \CB_\tau$, but several $B\in \CB_\tau$  may give the same vertex $s_B(b)$.

\begin{remark}[Minkowski sums of polytopes]
  Remark finally the following relation between parametric polytopes and
  Minkowski sums $t_1\p_1+t_2\p_2+\cdots+t_q\p_q$ of polytopes.  Let $\tau$ be
  an admissible chamber.  Let $b_1,b_2,\ldots, b_q$ in $\overline \tau$ and
  $t_1\geq 0,\ldots, t_q\geq 0$, then $t_1b_1+t_2b_2+\cdots+t_q b_q$ is in
  $\overline \tau$, and
\begin{equation}\label{eq:Minkowski}
  t_1\p(b_1)+t_2\p(b_2)+\cdots+t_q \p(b_q)=
  \p(t_1b_1+\cdots+t_q b_q).
\end{equation}

Indeed, using the linearity of the map $b\mapsto s_B(b)$, we see immediately
that any point in the convex hull of the elements $s_B(t_1b_1+\cdots+t_q b_q)$
with $B\in \CB_\tau$ is a sum of points in $\p(t_1 b_1),\ldots, \p(t_q b_q)$.

Conversely (see section \ref{subsubMinkowski}), it can be shown that any
Minkowski linear sum $t_1\p_1+t_2\p_2+\cdots+t_q\p_q$ can be embedded in a
parametric family of polytopes.
\end{remark}
\begin{remark}[Wall crossing]
  One of the interest of parametric polytopes is that we can also observe the
  variation of $\p(b)$ (see Figure \ref{fig:parametric-dim2}), when the
  parameter $b$ crosses a wall of a chamber $\tau$.  This corresponds to flips
  of the corresponding toric varieties.  The variation of the set $\p(b)$ has
  been studied in detail in \cite{BV-2011}. In this article we will only be
  concerned with the behavior of the function $b\mapsto S^L(\p(b),h)$ when $b$
  runs in the closure of a fixed admissible chamber.
\end{remark}
\subsubsection{Brion's theorem on supporting cones at vertices}
The basis of the present article is  the following theorem, 
which follows from  the Brianchon--Gram decomposition of a
 polytope $\p(b)$.

\begin{proposition}
    The indicator function of a polytope is equal to the sum of the indicator functions of its
    supporting cones at vertices, modulo linear combinations of indicator functions of
    affine cones with lines.
\end{proposition}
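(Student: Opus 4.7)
The plan is to deduce this from the classical Brianchon--Gram identity, which decomposes the indicator function of a polytope as a signed sum of the tangent cones at \emph{all} its faces (not just vertices). The key observation is that any face of positive dimension contributes a tangent cone containing a line, so modulo such cones only the vertex terms survive.

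More precisely, I would proceed as follows. First, recall (or quickly establish by induction on dimension, or via the Euler relations for polytopes) the Brianchon--Gram identity
\begin{equation*}
  [\p] \;=\; \sum_{F} (-1)^{\dim F}\,[t_F(\p)],
\end{equation*}
where the sum runs over all nonempty faces $F$ of $\p$ (including $\p$ itself) and $t_F(\p)$ denotes the affine tangent cone along $F$, i.e.\ the set of points $x\in V$ such that $x$ lies in $\p$ near any relative interior point of $F$. For a vertex $v$ this tangent cone is exactly the supporting cone $v+\c_v$, and $(-1)^{\dim F}=+1$.

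Next, I would observe that for any face $F$ with $\dim F \geq 1$, the affine tangent cone $t_F(\p)$ contains the affine span of $F$, in particular it contains a line (namely any line parallel to $\lin(F)$ through a point of $F$). Thus $[t_F(\p)]$ is the indicator function of an affine cone with lines. Collecting the vertex terms on one side and moving all faces of positive dimension to the other side, the Brianchon--Gram identity becomes
\begin{equation*}
  [\p] \;-\; \sum_{v\in \CV(\p)} [v+\c_v] \;=\; -\sum_{\dim F \geq 1} (-1)^{\dim F}\,[t_F(\p)],
\end{equation*}
which is a linear combination of indicator functions of affine cones containing lines, as required.

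The only non-routine ingredient is the Brianchon--Gram identity itself; the rest is bookkeeping. I expect that the cleanest justification of Brianchon--Gram in this setting proceeds either (a) by polarity and inclusion--exclusion on the fan of $\p$, or (b) by the standard projection / sweeping argument of Lawrence, which reduces the statement to a one-dimensional identity. Either way, the identity is in the literature and can simply be cited. With Brianchon--Gram in hand, the proposition reduces to the elementary observation that tangent cones along positive-dimensional faces contain lines.
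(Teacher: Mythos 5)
Your proof is correct and takes exactly the route the paper indicates: the paper states that the proposition ``follows from the Brianchon--Gram decomposition'' without spelling out details, and you have supplied precisely the missing bookkeeping — invoking the Brianchon--Gram identity $[\p]=\sum_F(-1)^{\dim F}[t_F(\p)]$ over all nonempty faces, noting that $t_F(\p)$ contains $\mathrm{aff}(F)$ (hence a line) whenever $\dim F\geq 1$, and observing that only the vertex terms (with sign $+1$) survive modulo cones with lines. This matches the paper's intent; the only caveat is that the paper also uses a strengthening for parametric polytopes (Proposition~\ref{th:brion-for-parametric-polytopes}), where it cites a continuity argument from \cite{BV-2011} to handle boundary parameter values $b\in\partial\tau$, but that is beyond the scope of the present statement.
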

\begin{figure}
\begin{center}
  \vspace*{-2cm}
  \begin{minipage}[t]{4cm}
    \mbox{}\vspace*{-6cm}\par
    \includegraphics[width=4cm]{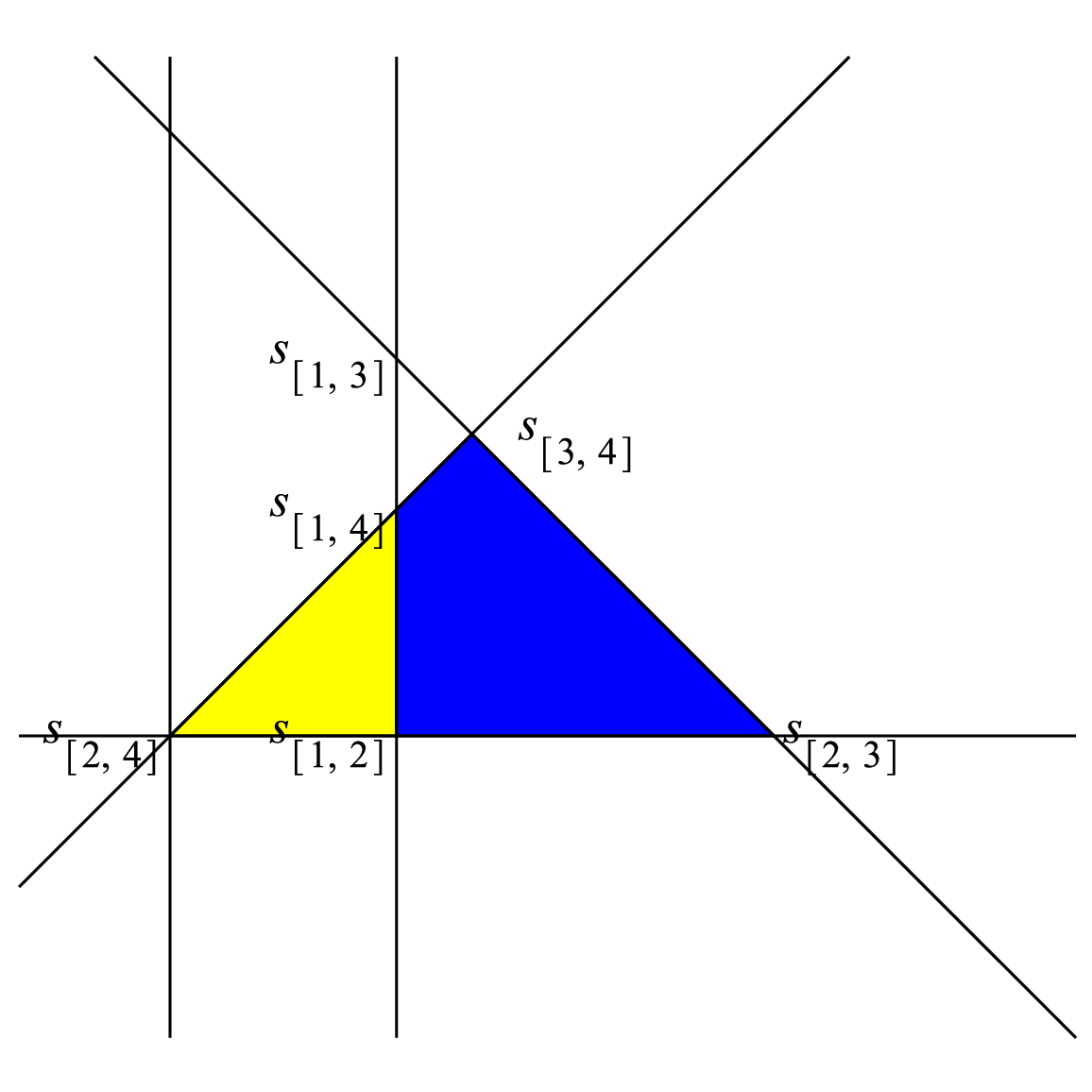}
  \end{minipage}
  \begin{minipage}[t]{8cm}
    \includegraphics[width=8cm]{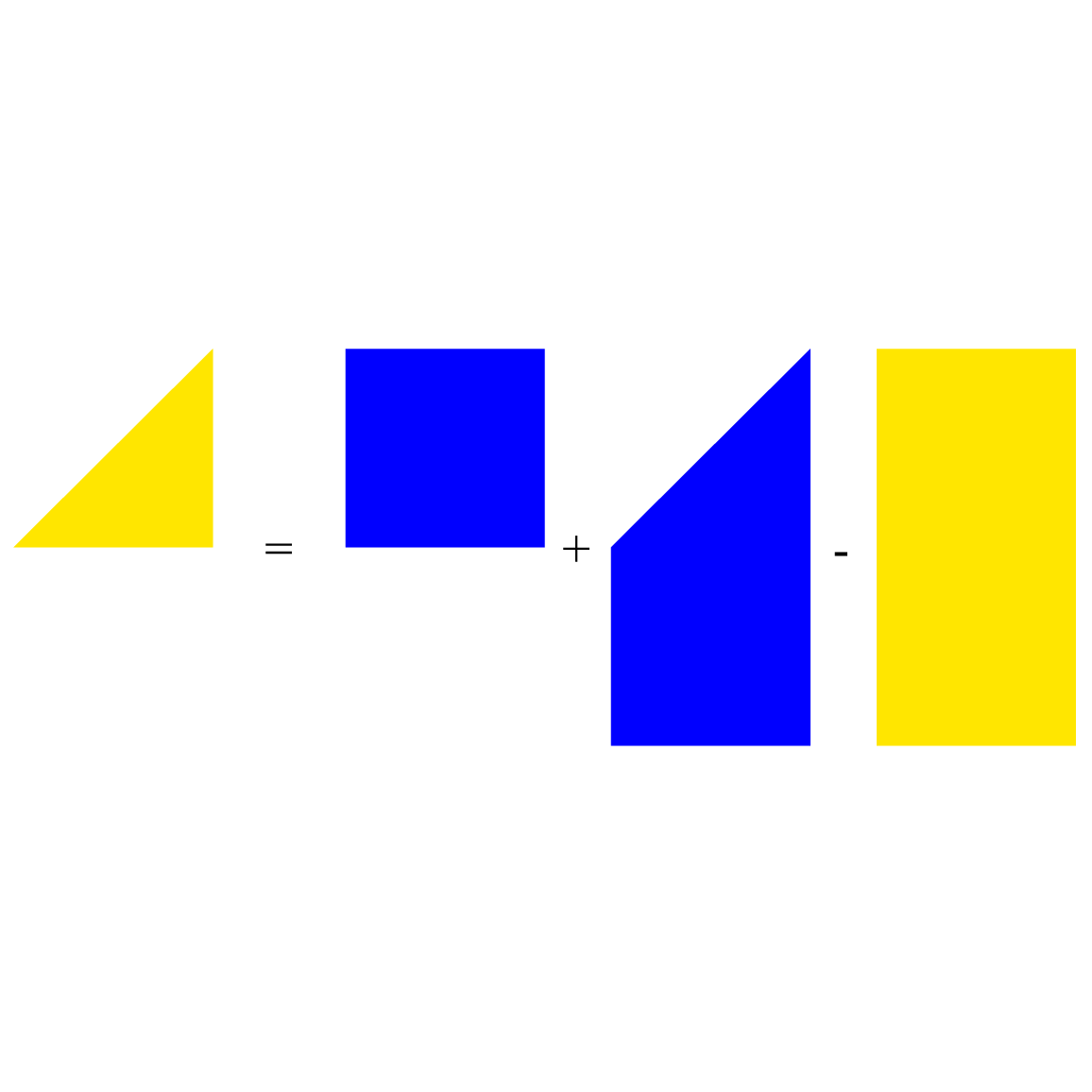}
  \end{minipage}
  \vspace*{-1.5cm}
 \caption{\emph{Left,} in the parametric polytope of Examples \ref{ex:parametric-dim2} and
   \ref{ex:parametric-dim2-chambers}, as the face $-x_1=b_1$ moves to the
   left, for $b \in \overline\tau_2$, the vertex $s_{[2,4]}$ merges with vertices
   $s_{[1,2]}$ and $s_{[1,4]}$, when $b$ reaches the boundary of the chamber
   $\tau_2$ and the quadrilateral degenerates to a triangle. 
   \emph{Right,} the indicator function of the supporting cone of $\p(b)$ at
   vertex $s_{[2,4]}(b) = s_{[1,2]}(b) = s_{[1,4]}$ (\emph{yellow})
   is the sum of the indicator functions of the cones $s_{[1,2]}(b)+\c_{[1,2]}$ and
   $s_{[1,4]}(b)+\c_{[1,4]}$ (\emph{blue}), modulo the indicator function of an affine cone with a 
   line (\emph{yellow}).}\label{fig:BrianchonGramContinu}
 \end{center}
\end{figure}
For parametric polytopes,  combined with the above description of cones at vertices,
this gives a decomposition of the indicator function  $[\p(b)]$ for $b\in \tau$.
If $b$ lies in the boundary of an admissible chamber~$\tau$, then several $B\in \CB_\tau$ may give the same vertex $s_B(b)$.
Nevertheless,  when $b$ lies in the closure $\overline{\tau}$, then  modulo
indicator functions of affine cones with lines,
the indicator function of the supporting cone of $\p(b)$ at vertex $s_B(b)$ is the sum
of the indicator functions of the cones $s_B(b)+\c_B$ for all the $B\in \CB_\tau$ which give this vertex (Figure~\ref{fig:BrianchonGramContinu}).
 One way to prove it is to use the continuity of the Brianchon--Gram decomposition of $[\p(b)]$, proven in \cite{BV-2011}.
\begin{proposition}\label{th:brion-for-parametric-polytopes}
  Let $\tau$ be an admissible  $\alpha$-chamber with closure
  $\overline{\tau}$. For $b\in \overline{\tau}$, the indicator function of $\p(b)$ is given by
  $$
  [\p(b)]\equiv\sum_{B\in \CB_\tau}[s_B(b)+\c_B] \mbox{  mod indicator functions of
    cones with lines}.
  $$
\end{proposition}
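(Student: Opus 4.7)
The plan is to verify the identity first for $b$ strictly inside the chamber $\tau$, where it follows directly from the classical Brianchon--Gram formula, and then extend to the closure $\overline{\tau}$ by a limiting argument based on the continuity result of \cite{BV-2011}.

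For $b \in \tau$, Proposition~\ref{prop:vertices-chamber} asserts that $\p(b)$ is simple, that its vertices are the distinct points $s_B(b)$ for $B \in \CB_\tau$, and that the cone of feasible directions at $s_B(b)$ is precisely $\c_B$. Hence $s_B(b)+\c_B$ is the supporting cone of $\p(b)$ at the vertex $s_B(b)$, and these cones are pointed. Since the tangent cones at faces of positive dimension all contain lines, the Brianchon--Gram identity reduces modulo indicator functions of cones with lines to $[\p(b)] \equiv \sum_v[v+c_v]$, where $v$ runs over vertices and $c_v$ denotes the cone of feasible directions at $v$. This immediately gives the asserted identity for $b\in\tau$.

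For $b_0 \in \overline{\tau} \setminus \tau$, distinct elements of $\CB_\tau$ may produce the same vertex. Writing $\CB_\tau(v) = \{\,B \in \CB_\tau : s_B(b_0) = v\,\}$ for each vertex $v$ of $\p(b_0)$, and letting $v+T_v$ denote the supporting cone of $\p(b_0)$ at $v$, the same reduction of Brianchon--Gram gives $[\p(b_0)] \equiv \sum_v [v + T_v]$ modulo cones with lines. Comparing with the statement of the proposition, it therefore suffices to establish, at each vertex $v$ of $\p(b_0)$, the local identity
\[
\sum_{B \in \CB_\tau(v)} [v + \c_B] \equiv [v + T_v] \pmod{\text{cones with lines}}.
\]
To prove this, we choose a path $b(t)\in\tau$ with $b(t)\to b_0$ as $t\to 0^+$. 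By linearity of $b \mapsto s_B(b)$, the cluster $\{\,s_B(b(t)) : B \in \CB_\tau(v)\,\}$ of nearby vertices of the simple polytope $\p(b(t))$ converges to $v$, while the cones $\c_B$ are fixed. Applying the already-established identity to $\p(b(t))$ and then passing to the limit using the continuity of the Brianchon--Gram decomposition of $[\p(b)]$ proven in \cite{BV-2011} yields the claimed local identity.

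The main obstacle is controlling the degeneration of the cluster of vertices into the single vertex $v$: the affine cones $s_B(b(t))+\c_B$ translate onto $v+\c_B$, and the sum $\sum_{B \in \CB_\tau(v)} [v+\c_B]$ is a superposition of pointed cones sharing the apex $v$ which need not, on the nose, equal the indicator of the single cone $v+T_v$. The continuity result of \cite{BV-2011} is precisely the ingredient that guarantees the discrepancy stays within the subspace spanned by indicator functions of cones containing lines, which is exactly the conclusion we need. Without such a continuity statement one would instead have to give a direct combinatorial analysis of the local fan at $v$, showing that the arrangement of the cones $\c_B$ for $B\in\CB_\tau(v)$ provides a Brianchon--Gram-type decomposition of $T_v$ modulo cones with lines.
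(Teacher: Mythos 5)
Your proposal is correct and follows essentially the same route as the paper: for $b$ in the open chamber, combine Proposition~\ref{prop:vertices-chamber} with the Brianchon--Gram decomposition (reduced modulo cones with lines to the vertex tangent cones), and for $b$ on the boundary, reduce to a local identity at each degenerate vertex and invoke the continuity of the Brianchon--Gram decomposition from \cite{BV-2011}. The paper states the same two-step structure somewhat more tersely, simply citing \cite{BV-2011} for the boundary case, so your elaboration of the limiting argument (and your explicit acknowledgment that the continuity result is the ingredient that confines the discrepancy to cones with lines) is a faithful expansion rather than a divergence.
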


From   Proposition~\ref{th:brion-for-parametric-polytopes} and the valuation
property of  intermediate generating functions, we obtain  Brion's formula (cf.~\cite{barvinokzurichbook}),
which expresses the holomorphic function $S^L(\p(b))(\xi)$ as a sum of
meromorphic functions indexed by the vertices of the polytope $\p(b)$.

\begin{equation}\label{th:brion}
  S^L(\p(b))(\xi)=\sum_{B\in \CB_\tau} S^L(s_B(b)+\c_B)(\xi),  \mbox{ for }  b\in \overline{\tau}.
 \end{equation}

\subsection{Step-polynomials and (semi-)quasi-polynomials of the
  multi-parame\-ter $b$}\label{sub:quasipoly}

\tgreen{Rewrote this section. --Matthias}

In this section, we consider a fixed rational subspace $L\subseteq V$. There corresponds an intermediate   generating function
$S^L(\p(b))(\xi)$ and  intermediate weighted sums $S^L(\p(b),h)$  on a
parametric polytope $\p(b)$.

Recall from the introduction the algebras of step-polynomials and quasi-polyno\-mi\-als on $\R^N$, in
terms of which we will describe these intermediate sums as   functions of the parameter $b\in \R^N$.
We give more general definitions now.

\begin{definition}\label{def:step-poly-V}
  Let $\Psi \subseteq \R^N$.  
  \begin{enumerate}[\rm(i)]
  \item $\polypp^\Psi(\R^N)$ is the algebra of functions on $\R^N $ generated by
    the functions $b \mapsto \{\langle\eta,b \rangle\}$, where $\eta\in \Psi$.
    An element of $\polypp^{\Psi}(\R^N)$ is called a \emph{step-polynomial} on
    $\R^N$.
  \item For $\Psi=\Q^N$, we obtain the \emph{algebra of rational
      step-polynomials}, which we abbreviate as $\polypp(\R^N)$. 
  \item For $\Psi=\R^N$, we obtain the \emph{algebra of step-polynomials} $\polypp^{\R^N}(\R^N)$.
  \end{enumerate}
\end{definition}
Note that a step-polynomial function (whether rational or not) is a bounded function on~$\R^N$.

The algebra  $\polypp^{\Psi}(\R^N)$  has a natural filtration,
 where    $\polypp^{\Psi}_{[\leq k]}(\R^N)$ is  the subspace spanned  by products of at most $k$ functions
 $\{\langle \eta,b \rangle\}$.
 Again this is a filtration, not a grading, because several step-polynomials with
 different (step) degrees  may represent the same function.
 \begin{example} For every   $t\in \R$,
$$
1-\{t\}-\{-t\}-(1-\{2t\}-\{-2t\})(1-\{3t\}-\{-3t\})=0.
$$
\end{example}

\begin{definition}\label{def:semi-quasi-poly}
  Again let $\Psi \subseteq \R^N$.  
  \begin{enumerate}[\rm(i)]
\item The tensor product
    $\polypp\CP^\Psi(\R^N)  = \polypp^\Psi(\R^N) \otimes \CP(\R^N)$ is the algebra of functions on $\R^N$ generated by
    step-polynomials in $\polypp^\Psi(\R^N)$ and ordinary polynomials on~$\R^N$. An element of
    $\polypp\CP^\Psi(\R^N)$ is called a \emph{semi-quasi-polynomial} on $\R^N$.
  \item For $\Psi=\Q^N$, we obtain the \emph{algebra of quasi-polynomials} on
    $\R^N$, which we abbreviate as $\polypp\CP(\R^N)$.
  \item For $\Psi=\R^N$, we obtain the \emph{algebra of
      semi-quasi-polynomials} on~$\R^N$, denoted by $\polypp\CP^{\R^N}(\R^N)$.
  \end{enumerate}
\end{definition}

A (semi-)quasi-polynomial $f(b)$ is a piecewise polynomial.
  More precisely, let $\Psi$ be a \emph{finite} set of
  $\eta\in \R^N$ such that $f \in \polypp^\Psi(\R^N)$, i.e.,  $f(b)$ can be expressed as a polynomial in the functions
  $b\mapsto\{\langle \eta,b\rangle\}$, with $\eta\in \Psi$. The open
  ``pieces'' on which $f(b)$ is a polynomial function of~$b$ are the
  $\Psi$-alcoves defined as follows.

\begin{definition}\label{def:Psi-alcove}
 Let $\Psi$ be a finite subset of $\R^N$.
 We consider  the  hyperplanes in $\R^N$ defined by the equations
 $$
 \langle \eta ,b\rangle= n, \mbox{  for  } \eta\in \Psi  \mbox{ and   }   n\in \Z.
 $$
 A connected component of the complement of the union of these hyperplanes is called a \emph{$\Psi$-alcove}.
 \end{definition}

 On the tensor product  $\polypp\CP^{\Psi}(\R^N)$,
  we will consider the grading inherited from  the usual degree on  $\CP(\R^N)$.
 We will call the corresponding  degree the \emph{polynomial degree}.

We consider also the  degree arising from the tensor product filtration, which
we call the \emph{local degree}.
With these notations, if $f(b)\in \polypp\CP^{\Psi}_{[\leq k]}(\R^N)$,  then $f(b)$ restricts as a polynomial function of degree $\leq k$ on any $\Psi$-alcove.
\begin{example}
The quasi-polynomial $f(b)=b^3\{ b\}$ on $\R$  has polynomial degree~$3$ and
local degree~$4$. It is equal to $b^4- n b^3$, a polynomial of degree~$4$, for  $n\leq b < n+1$.
\end{example}
\medskip
%

In the remainder of this section, we will have $\Psi \subset \Q^N$, and hence
work with rational step-polynomials and quasi-polynomials; however,
in \autoref{sub:weighted-ehrhart-semi-quasi}, we will use more general~$\Psi$.

We now explain how to construct a finite set~$\Psi$ that is suitable for our
multi-parameter Ehrhart quasi-polynomials.

In \cite{SLII2014}, for any rational cone $\c\subset V$, and a rational subspace $L$,
we constructed a finite subset $\Psi_{\c}^L \subset\lattice^*\cap L^\perp$ of integral  linear forms on $V$
\cite[Definition~\ref{sl2:def:PsiL}]{SLII2014}.
We briefly recall the steps of this construction; the details can be found in
\cite[proof of Lemma 2.10]{SLII2014}.
We start by decomposing $\c$ in cones $\u$  (modulo cones with lines) with a face parallel to $L$,
 then we decompose the projections of $\u$ on $V/L$ as a signed decomposition of unimodular cones with respect to the projected lattice
 $\lattice_{V/L}$ with dual  lattice $\Lambda^*\cap L^{\perp}$.
 In the set $\Psi_{\c}^L$, we collect all the generators of the dual cones used in this decomposition.
(The decomposition is not unique, but we do not record the dependence of
$\Psi_{\c}^L$ on the decomposition in the notation.)
As $\Psi^L_\c\subset \Lambda^*\cap L^{\perp}$,
the step-polynomials in the algebra $\polypp^{ \Psi^L_\c}(V)$ (see \autoref{subsection:polyhedra}) are functions on $V/(\Lambda+L)$.

\begin{definition}\label{def:polyppB}
  \begin{enumerate}[\rm(i)]
  \item Let $B\in \CB$.  Define $\Psi_B^L(\alpha)$ as the set of rational linear forms
    on $\R^N$ defined by $ \langle \eta,b\rangle = \langle
    \gamma,s_B(b)\rangle$, for $\gamma\in \Psi^L_{\c_B}$.
  \item
    If $\tau$ is an $\alpha$-chamber, $\Psi^L_\tau(\alpha)$ is the union of the sets
    $\Psi^L_B(\alpha)$ when $B$ runs in $\CB_\tau$.  
  \end{enumerate}
\end{definition}

By definition, if $f\in  \polypp\CP^{\Psi^L_{\c_B}}(V)$ (see \autoref{subsection:polyhedra}), then the function
$b\mapsto f(s_B(b))$ is in $\polypp\CP^{\Psi^L_{B}(\alpha)}(\R^N)$.

Let us describe a little more precisely the sets $\Psi_{\c_B}^L $ and the corresponding step-polynomials on $\R^N$ when $L=V$ or $L=\{0\}$.

When $L=V$, the sets $\Psi^L_{\c_B}$ are empty, thus  the  step-polynomials in $\polypp^{ \Psi^L_\tau(\alpha)}(\R^N)$ are
just the constants.

When $L=\{0\}$, let $q\in \N$ be such that the lattice  $q\lattice^*$
is contained in the lattice generated by
the elements $\alpha_j$, for $j\in B$. Then the multiple  $q s_B(b)$ of the vertex $s_B(b)$  belongs to $\lattice$  if $b\in \Z^N$,
therefore any step-polynomial $f(b)\in \polypp^{ \Psi^{\{0\}}_B(\alpha)}(\R^N)$
is $q\Z^N$-periodic.
Thus a quasi-polynomial $f$ on $\R^N$ gives by restriction to $\Z^N$ a periodic function of period $q$,
and we recover the usual notion of quasi-periodic function on a lattice.

If $\c_B$ is the simplicial cone described by inequalities
$\la\alpha_j,x\ra\leq 0$, with $j\in B$, and if the $\alpha_j, j\in B$ form a basis of $\Lambda^*$,
then  the set $\Psi_{\c_B}^{\{0\}} \subset\lattice^*$ is just equal to $\{\,\alpha_j: j\in B\,\}$.
Thus if the sequence $\alpha$ is \emph{unimodular},
that is, if $\{\, \alpha_j : j\in B\,\}$ is a basis of $\lattice^*$ for any
$B\in \CB$, any step-polynomial $f(b)\in \polypp^{ \Psi^L_\tau(\alpha)}(\R^N)$ is
 $\Z^N$-periodic, in particular $f(b)$  is constant on $ \Z^N$.

\subsection{Weighted Ehrhart (semi-)quasi-polynomials}\label{sub:weighted-ehrhart-semi-quasi}
\subsubsection{Case of a parametric polytope}
We can now state the first important result of this article.  We summarized
it in the introduction, in a less technical form, as Theorem~\ref{th:ehrhart-chamber-intro-summary}.
\begin{theorem} \label{th:ehrhart-chamber}
Let $V$ be a rational vector space with lattice $\lattice$. Let $L\subseteq V$ be a rational subspace.
Let $\alpha=(\alpha_1,\dots,\alpha_N)$ be a list of  elements of $\lattice^*$ which generate $V^*$ as a cone.
For $b\in \R^N$, let
$\p(b)\subset V$ be the polytope defined by
$$\p(b)=\{\,x\in V: \la \alpha_j,x\ra\leq b_j,\, j=1,\ldots,N\,\}.$$
Let $\tau\subset \R^N$ be an admissible $\alpha$-chamber.
Let $h$ be a polynomial function on $V$ of degree $m$.

\begin{enumerate}[\rm(i)]
\item There exists a quasi-polynomial of local degree equal to $m+d$,
\begin{equation}\label{eq:ehrhart-chamber}
E^L(\alpha,h,\tau) \in
\polypp\CP_{[\leq {m+d}]}^{ \Psi^L_\tau(\alpha)}(\R^N)
\end{equation}
such that
\begin{equation}\label{eq:Ehrhart-quasi-poly}
S^L(\p(b),h)=E^L(\alpha,h,\tau)(b),
\end{equation}
for every $b\in \overline{\tau}$ (the closure of $\tau$ in $\R^N$).
\item\label{thpart:ehrhart-chamber:highest-degree}
  If  $h(x)$ is homogeneous of degree $m$, then the terms
  of $E^L(\alpha,h,\tau)(b)$  of polynomial degree $m+d$ form a homogeneous
  polynomial of degree $m+d$ that is equal to the integral $\int_{\p(b)}
  h(x)\, \mathrm dx$
  for $b \in \overline{\tau}$.
\item
More precisely, if $h(x)=\frac{\langle \ell,x \rangle ^m}{m!} $ for some $\ell\in V^*$, we have
\begin{equation}\label{eq:top-parametric-Ehrhart}
 E^L(\alpha,h,\tau)(b)=
\sum_{r=0}^{m+d} E^L_{[r]}(\alpha,h,\tau)(b)
\end{equation}
for $b \in \overline{\tau}$, where
for each $r$, the function of $b\in\R^N$ given by
$$ E^L_{[r]}(\alpha,h,\tau)(b)=\left(\sum_{B\in \CB_\tau}
\retroS^L(s_B(b),\c_B)_{[m-r]}(\xi)
 \frac{\langle\xi,s_B(b)\rangle^r}{r!}\right)\bigg|_{\xi=\ell}$$
is an element of $\polypp_{[\leq {m+d-r}]}^{ \Psi^L_\tau(\alpha)}(\R^N)\otimes
\CP_{[r]}(\R^N)$, i.e., 
of polynomial degree $r$ and local degree at most $m+d$.
\end{enumerate}
\end{theorem}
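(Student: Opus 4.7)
The plan is to apply Brion's formula \eqref{th:brion} to reduce the statement about the polytope $\p(b)$ to a statement about the intermediate generating functions $S^L(s_B(b)+\c_B)(\xi)$ at the parametric vertices, and then to use the structure results of \cite{SLII2014} about $M^L(s,\c)(\xi)$ as a step-polynomial function of $s$. Since any polynomial $h$ of degree $m$ can be written as a linear combination of $m$-th powers of linear forms $\langle \ell, x\rangle^m/m!$ (see \cite{baldoni-berline-deloera-koeppe-vergne:integration}), it is enough to prove parts (i)--(iii) for $h(x)=\langle \ell,x\rangle^m/m!$. In this case, $S^L(\p(b),h)$ is the value at $\xi=\ell$ of the homogeneous component $S^L(\p(b))_{[m]}(\xi)$, so this is the quantity to analyze.

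The key identity to invoke is \eqref{eq:MversusS}: for each $B\in \CB_\tau$,
\begin{equation*}
 S^L(s_B(b)+\c_B)_{[m]}(\xi)=\sum_{r=0}^{m+d} M^L(s_B(b),\c_B)_{[m-r]}(\xi)\,\frac{\langle \xi,s_B(b)\rangle^r}{r!}.
\end{equation*}
Summing over $B\in\CB_\tau$ (which is valid on $\overline\tau$ by Proposition~\ref{th:brion-for-parametric-polytopes}) and evaluating at $\xi=\ell$ gives exactly the proposed formula \eqref{eq:top-parametric-Ehrhart} for $E^L(\mu,h,\tau)(b)$, with the individual terms $E^L_{[r]}(\mu,h,\tau)(b)$ as in the statement. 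This defines the candidate quasi-polynomial; it remains to check the claimed degree structure.

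For the local/polynomial degree bookkeeping, I would invoke from \cite{SLII2014} the fact that for fixed $\c$ and $\xi$, the function $s\mapsto M^L(s,\c)_{[j]}(\xi)$ is a step-polynomial on $V$ belonging to $\polypp^{\Psi^L_{\c}}(V)$; more precisely its step degree is bounded by $d+j$ (this is exactly the content of the bidegree bound in \cite{SLII2014} that I would cite here). Since $b\mapsto s_B(b)$ is linear (Definition~\ref{def:mubasis}), precomposition by $s_B$ sends $\polypp^{\Psi^L_{\c_B}}(V)$ into $\polypp^{\Psi^L_B(\mu)}(\R^N)$ by Definition~\ref{def:polyppB}, so $b\mapsto M^L(s_B(b),\c_B)_{[m-r]}(\ell)$ lies in $\polypp^{\Psi^L_\tau(\mu)}_{[\leq m+d-r]}(\R^N)$. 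The remaining factor $\langle \ell, s_B(b)\rangle^r/r!$ is a genuine polynomial of degree $r$ in $b$, so the $r$-th term lies in $\polypp^{\Psi^L_\tau(\mu)}_{[\leq m+d-r]}(\R^N)\otimes \CP_{[r]}(\R^N)$, whose total local degree is at most $m+d$. Summing over $r$ proves parts (i) and (iii).

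For part (ii), I would isolate the $r=m+d$ term in \eqref{eq:top-parametric-Ehrhart}: this is the only term of polynomial degree $m+d$, and it involves the lowest homogeneous component $M^L(s_B(b),\c_B)_{[-d]}$, which by \eqref{eq:lowest} equals the $\xi$-meromorphic function $I(\c_B)(\xi)$ and is in particular independent of $L$ and of the step-polynomial data. Summing these contributions over $B\in\CB_\tau$ and applying Brion's formula (\ref{th:brion}) to the integral $I(\p(b))(\xi)=\sum_B I(s_B(b)+\c_B)(\xi)=\sum_B e^{\langle \xi,s_B(b)\rangle}I(\c_B)(\xi)$, together with the analogue of \eqref{eq:MversusS} for $L=V$ and the definition of the integral of $h$ as the degree-$m$ coefficient at $\xi=\ell$, identifies the top-polynomial-degree piece with $\int_{\p(b)} h(x)\,dx$. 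The main obstacle I anticipate is the careful degree accounting in the third paragraph: the bound on the step degree of $M^L(s,\c)_{[j]}$ as a function of $s$ and the exact shape of the sets $\Psi^L_{\c_B}$, $\Psi^L_B(\mu)$ must be quoted with precision from \cite{SLII2014}, and the pullback under the linear map $b\mapsto s_B(b)$ must be verified to preserve the advertised filtration rather than merely some looser one.
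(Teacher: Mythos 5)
Your overall plan matches the paper's own proof: reduce to powers of linear forms, expand via Brion's formula and identity \eqref{eq:MversusS}, and then invoke the bidegree structure of $M^L(s,\c)$ from \cite{SLII2014} to do the degree bookkeeping. That is the right approach and the citation you want is indeed Theorem~\ref{sl2:prop:homogeneous-ML} of \cite{SLII2014}.

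There is, however, one genuine gap in the degree-bookkeeping paragraph that you do not flag as such. You write that for a single $B$, the function $b\mapsto M^L(s_B(b),\c_B)_{[m-r]}(\ell)$ lies in $\polypp^{\Psi^L_\tau(\mu)}_{[\leq m+d-r]}(\R^N)$, obtained by ``fixing $\xi=\ell$'' and precomposing with $s_B$. But $M^L(s,\c_B)_{[m-r]}(\xi)$ is a rational function of $\xi$ whose poles lie on the hyperplanes dual to the edges of $\c_B$, so for an individual $B$ the evaluation at $\xi=\ell$ may simply not exist. The statement cannot even be formulated per $B$. The remedy, which is what the paper actually does, is to keep $\xi$ symbolic: use the cited theorem to place, for each $B$, the function $(b,\xi)\mapsto M^L(s_B(b),\c_B)_{[m-r]}(\xi)\,\langle\xi,s_B(b)\rangle^r/r!$ in $\polypp_{[\leq m+d-r]}^{\Psi^L_B(\mu)}(\R^N)\otimes\CP_{[r]}(\R^N)\otimes\CR_{[m]}(V^*)$, then sum over $B\in\CB_\tau$. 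Because the full sum over $B$ (and over $r$) agrees with the Brion expansion of $S^L(\p(b))_{[m]}(\xi)$, it is a quasi-polynomial in $b$ with values in \emph{polynomials} of $\xi$; since the polynomial degree in $b$ grades the tensor product, each degree-$r$ term summed over $B$ is itself polynomial in $\xi$ and only \emph{then} may one set $\xi=\ell$. The obstacle you single out at the end (whether the step-degree bound is stated precisely enough in \cite{SLII2014} and whether the pullback by the linear map $s_B$ preserves the filtration) is in fact not a problem: the bound $m+d-r$ on the step degree of $M^L(s,\c)_{[m-r]}$ is exactly what the cited theorem gives, and precomposition by a linear map sends $\polypp^{\Psi^L_{\c_B}}(V)$ to $\polypp^{\Psi^L_B(\mu)}(\R^N)$ preserving the filtration essentially by Definition~\ref{def:polyppB}. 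The issue you should worry about is the one above. Part (ii) as you sketch it is fine once this is repaired, since for $r=m+d$ the component $M^L(s_B(b),\c_B)_{[-d]}(\xi)=I(\c_B)(\xi)$ by \eqref{eq:lowest} and summing over $B$ reconstitutes $I(\p(b))_{[m]}(\xi)$ by Brion, which is already polynomial in $\xi$.
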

In fact, for a single $B$,
$\retroS^L(s_B(b),\c_B)_{[m-r]}$ is a rational function of $\xi$ and may be singular at $\ell$, so that the value
$\retroS^L(s_B(b),\c_B)_{[m-r]}(\ell)$ may not be well defined.
 However, as we will see in the proof, for each $r$, the sum over the $B\in \CB_\tau$
of the rational functions  $\retroS^L(s_B(b),\c_B)_{[m-r]}(\xi)
 \frac{\langle\xi,s_B(b)\rangle^r}{r!}$
is a polynomial function of $\xi$,  so it  can be evaluated at $\xi=\ell$.
So Formula \eqref{eq:top-parametric-Ehrhart} is well defined.

\begin{proof}
The proof of Theorem  \ref{th:ehrhart-chamber} rests on  Brion's formula (\ref {th:brion}).
 We observe that it is enough to  prove the theorem in the case
where the weight  $h$ is a power of a linear form,
$$
h(x)= \frac{\langle\ell,x\rangle^m}{m!},
$$
for some $\ell\in V^*$, as any weight can be written as a linear combination of those.
In this case, $ S^L(\p(b),h)$ is the value at $\xi=\ell$
of the homogeneous term of degree $m$ of the holomorphic function
$ S^L(\p(b))(\xi)$.
So we write (using the fundamental Equation \eqref{eq:MversusS})
\begin{align*}
  S^L(\p(b),h)_{[m]}(\ell) &= \left(\sum_{B\in \CB_\tau} S^L(s_B(b)+\c_B)_{[m]}(\xi)\right)\bigg|_{\xi=\ell}\\
&= \sum_{r=0}^{m+d}\left(\sum_{B\in \CB_\tau}
\retroS^L(s_B(b),\c_B)_{[m-r]}(\xi) \frac{\langle\xi,s_B(b)\rangle^r}{r!}\right)\bigg|_{\xi=\ell}   .
\end{align*}
For an individual  $B$, the term
  $S^L(s_B(b)+\c_B)_{[m]}(\xi)$ may be singular at $\xi=\ell$.
However the sum over the set of vertices $\CB_\tau$ is  a polynomial function of $\xi$.


In \cite{SLII2014} we studied the bidegree structure of
$\retroS^L(s,\c)(\xi)$, i.e., the interaction of the local degree in $s$ and
the homogeneous degree in~$\xi$, which allows us to
extract the refined asymptotics. 
For each $r$ and  $B$, we consider the function of $s\in V$, $\xi\in V^*$,
given by $(s,\xi)\mapsto \retroS^L(s,\c_B)_{[m-r]}(\xi)$.
By Theorem \ref{sl2:prop:homogeneous-ML} of \cite{SLII2014},
this function belongs to the space
$$\polypp_{[\leq m+d-r]}^{ \Psi^L_{\c_B}}(V)\otimes \CR_{[m-r]}(V^*).$$

Compose with the linear map $b\mapsto s_B(b)$.
  We obtain that,
 for each $r$ and $B$, the function of $b\in \R^N$, $\xi\in V^*$
given by
$$(b,\xi)\mapsto
\retroS^L(s_B(b),\c_B)_{[m-r]}(\xi) \frac{\langle\xi,s_B(b)\rangle^r}{r!}
$$
 belongs to
$$
\polypp_{[\leq {m+d-r}]}^{ \Psi_B^L(\alpha)}(\R^N)\otimes \CP_{[r]}(\R^N)
\otimes \CR_{[m]}(V^*).
$$
Therefore  the sum over $B\in \CB_\tau$ of these terms, for a fixed $r$,
belongs to
$$
\polypp_{[\leq {m+d-r}]}^{ \Psi^L_\tau(\alpha)}(\R^N)\otimes \CP_{[r]}(\R^N)
\otimes \CR_{[m]}(V^*).
$$
Now the sum over $B\in \CB_\tau$ is a quasi-polynomial function of $b$
with values in the space of {polynomials} in $\xi$, not just rational functions of~$\xi$.
It follows  that for each $r$, the term of polynomial degree $r$ in $b$ of the full sum
depends also polynomially on $\xi$.
This term is
$$
\sum_{B\in \CB_\tau} \retroS^L(s_B(b),\c_B)_{[m-r]}(\xi)  \frac{\langle\xi,s_B(b)\rangle^r}{r!}.
$$

   When we evaluate it at $\xi=\ell$,  we obtain
   \eqref{eq:top-parametric-Ehrhart}, and all statements but
   part~\eqref{thpart:ehrhart-chamber:highest-degree}, which we prove now.

Let us compute the term of polynomial degree $r=m+d$ with respect to $b$, in  \eqref{eq:top-parametric-Ehrhart}.
 From Equation \eqref{eq:lowest}, we know that
$\retroS^L(s_B(b),\c_B)_{[-d]}(\xi)=I(\c_B)(\xi)$, thus
 the term  of index $r=m+d$  in  \eqref{eq:top-parametric-Ehrhart} is equal to
$$
\left(\sum_{B\in \CB_\tau} \frac{\langle\xi,s_B(b)\rangle^{m+d}}{(m+d)!}I(\c_B)(\xi)\right)\bigg|_{\xi=\ell}=
\left(\sum_{B\in \CB_\tau} I(s_B(b)+\c_B)_{[m]}(\xi)\right)\bigg|_{\xi=\ell}.
$$
By Proposition \ref{th:brion-for-parametric-polytopes}, this last sum is equal to
$I(\p(b))_{[m]}(\ell)$, which is precisely the integral  $\int_{\p(b)}\frac{\langle \ell,x \rangle ^m}{m!}  \, \mathrm dx$.
\end{proof}

\begin{definition}\label{def:ehrhart-chamber}
   The function $E^L(\alpha,h,\tau)(b)$ of Theorem \ref{th:ehrhart-chamber}
    is called the \emph{weighted intermediate Ehrhart quasi-polynomial} of the parametric polytope $\p(b)$
   (with respect to the weight $h$, the subspace $L$ and  the chamber $\tau$).
\end{definition}
\begin{example}[Example~\ref{ex:parametric-dim1}, continued]\label{ex:generalizedEhrh-interval}
  Let $\alpha = (x, -x)$. Then for $b = (b_1, b_2) \in \R^2$,  $\p(b)$ is
   the interval $\{\, x: -b_2\leq x\leq b_1\,\}$. There are two chambers, $\{\, b :
   -b_2<b_1\,\}$ and $\{\, b : -b_2>b_1\,\}$.
   For the first chamber the number of integers in $\p(b)$ is
   $\lfloor b_1\rfloor - \lceil -b_2\rceil+1= b_1+ b_2 -\{b_1\}-\{b_2\}+1$.
   For the other chamber, it is of course~$0$.
 \end{example}
\begin{example}[continuation of Examples \ref{ex:parametric-dim2},
  \ref{ex:parametric-dim2-vertices}, and \ref{ex:parametric-dim2-chambers}]
  \let\frac=\tfrac
We compute  the quasi-polynomial function  $ E^{L}(\alpha,h,\tau_2)(b)$
($\p(b)$ is a quadrilateral for $b\in \tau_2$),
 first for $L=\{0\}$, then for the case when $L$ is the vertical line $L=\R(0,1)$. The weight is $h(x)=1$.

\begin{enumerate}[\rm(i)]
\item
  For $L={\{0\}}$ and $h=1$, i.e., we count the integer points  in $\p(b)$, we write
   $$E^{\{0\}}(\alpha,1,\tau_2)(b) = E_{[2]}(b)+E_{[1]}(b)+E_{[0]}(b),$$ where $E_{[r]}(b)$
   collects the terms of polynomial degree $r$ with respect to~$b$.
   $E_{[2]}(b)$ is the volume  of the quadrilateral. It is a polynomial function,
   easy to compute. The other two functions were computed with our Maple
   program. 
\begin{align*}
E_{[2]}(b)&= 
-\frac{b_1^2}{2}+\frac{b_2^2}{2}+\frac{b_3^2}{4}-\frac{b_4^2}{4}+b_1b_2+b_1b_4+b_2b_3+\frac{b_3b_4}{2}.\\
E_{[1]}(b)&=\bigl(\frac{1}{2}+\{b_1\}-\{b_2\}-\{b_4\}\bigr)b_1\\
&\quad +\bigl(\frac{3}{2}-\{b_1\}-\{b_2\}-\{b_3\}\bigr)b_2\\
&\quad +\bigl(1-\{b_2\}-\frac{1}{2}\{b_3\}-\frac{1}{2}\{b_4\}\bigr)b_3\\
&\quad +\bigl(\frac{1}{2}-\{b_1\}-\frac{\{b_3\}}{2}+\frac{\{b_4\}}{2}\bigr)b_4.\\
E_{[0]}(b)&=1-\frac12{\{b_1\}}-\frac32{\{b_2\}}-\{b_3\}-\frac12{\{b_4\}}\\
&\quad-\frac{1}{2}\{b_1\}^2+\frac{1}{2}\{b_2\}^2-\frac{1}{2}\{b_4\}^2
-\{\frac{b_4+b_3}{2}\}^2\\
&\quad+\{b_1\}\{b_2\}+\{b_1\}\{b_4\}+\{b_2\}\{b_3\}\\
&\quad+\{b_3\}\{\frac{b_4+b_3}{2}\}+\{b_4\}\{\frac{b_4+b_3}{2}\}.
\end{align*}
We see that $E_{[2]}(b)$  is a linear combination of products of two linear forms,  $E_{[1]}(b)$
is a linear combination of products of
linear forms with step-linear forms,  while  $E_{[0]}(b)$  is a linear combination
of products of at most two step-linear forms.
Thus each of the $E_{[r]}(b)$ is of local degree~$2$.

\item We compute the intermediate quasi-polynomial $$E^L(\alpha,1,\tau_2)(b)=E^L_{[2]}(b)+E^L_{[1]}(b)+E^L_{[0]}(b)$$ for the same chamber $\tau_2$,
 when $L$ is the vertical line $L=\R(0,1)$, and again $h=1$. Thus we compute the sum $S^L(\p(b),1)$
 of the lengths of  vertical segments in the quadrilateral  $\p(b)$.
 Then $E^L_{[2]}(b)= E_{[2]}(b)=\vol(\p(b))$ is again the volume of $\p(b)$, and we compute
 \begin{align*}
   E^L_{[1]}(b)&=-\frac{1}{2}b_1+\frac{1}{2}b_2+\frac{1}{2}b_4+ \{b_1\}b_1-\{b_1\}b_2
   -\{b_1\}b_4,\\
   E^L_{[0]}(b)&=\frac{1}{2}\{b_1\} +\frac{1}{2}\{b_2+b_3\}
   -\{\frac{b_3-b_4}{2}\}\\
   &\quad -\frac{1}{2}\{b_1\}^2-\frac{1}{2}\{b_2+b_3\}^2+\{\frac{b_3-b_4}{2}\}^2.
\end{align*}
Again, we observe that the local degree of $E^L_{[r]}(b)$ is indeed $2$ for $r=0,1,2$.
\end{enumerate}
\end{example}

 \begin{remark}
In this theorem, the parameter $b$ varies in $\R^N$.
In particular, the results of \cite{HenkLinke} on ``vector dilated
 polytopes'' follow easily from this theorem.\footnote{Note that \cite{HenkLinke} states
   and proves results for \emph{rational} vector dilations only. }
 The article \cite{HenkLinke} was part of our motivation to consider the case of a
 multidimensional real-valued parameter and not just one parameter dilations. 
\end{remark}

\begin{remark}
   When $L=V$, we are computing an integral over $\p(b)$.
   It is clearly a polynomial function of $b$ on any chamber.
   This is consistent with the fact that  $\polypp^{ \Psi^V_\tau(\alpha)}(\R^N) $ is just the constants.

 Classically,   in particular when computing the number of lattice points (case $L=\{0\}$, $h(x)=1$),
the parameter  $b$ was restricted to $\Z^N$. As we already  observed,
if $q\lattice^*$ is contained in the lattice generated by $(\alpha_j,j\in B)$
for any $B\in \CB_\tau$, then the coefficients  of the Ehrhart quasi-polynomial \eqref{eq:ehrhart-chamber} are $q\Z^N$-periodic functions on~$\R^N$,
 therefore the Ehrhart quasi-polynomial restricts to any coset $\{\,
 b_0+q n : n\in\Z^N \,\}$
 as a true polynomial function of $n\in \Z^N$.
\end{remark}

\begin{remark}[Case of partition polytopes]\label{rem:parametric-versus-partition}
The paper~\cite{BV-2011} deals with  Ehrhart quasi-polynomials for weighted sums and integrals over  a partition polytope.
Their variation is computed, when the parameter crosses a wall between two chambers.
 Let us recall how a parametric polytope $\p(\alpha,b)$ is associated to a partition polytope $\pp(\Phi,\lambda)$.
 Let $F$ be a vector space of dimension $N-d$ and let $\Phi=(\phi_1,\ldots,\phi_N)$ be a sequence of elements of $F$.
 We assume that $\Phi$ generates a full-dimensional pointed cone in $F$.
 For $\lambda\in F$, let
$$\pp(\Phi,\lambda)=\Bigl\{\,y=(y_j)\in \R^N : y_j\geq 0,\, \textstyle\sum_{j=1}^N y_j \phi_j=\lambda \,\Bigr\}.
 $$
  This 
  is a polytope contained in the affine subspace
  $\bigl\{\, y=(y_j)\in \R^N : \sum_{j=1}^N y_j \phi_j=\lambda \,\bigr\}$.
 Define  $V:=\bigl\{\,y=(y_j)\in \R^N : \sum_{j=1}^N y_j \phi_j=0\,\bigr\}$.
Let $\alpha_j$ be the linear form on $V$ defined by $\langle\alpha_j,x\rangle=-x_j$.
 For $b\in \R^N$, let  $\lambda=\sum_{j=1}^N b_j \phi_j$.
Then $x\mapsto x+b$ is a bijection between $\p(\alpha,b)$ and $ \pp(\Phi, \lambda)$.

\end{remark}
%
%

\begin{remark}[Wall crossing]
Finally, it would be interesting to study
the variation of the quasi-polynomials $S^L(\p(b),h)$ when $b$ crosses the wall of a chamber $\tau$.
The method of\/ \cite{BV-2011} could probably  be adapted to the more general case
of intermediate weighted sums  of a parametric polytope.
\end{remark}

\subsubsection{Specialization to other parameter domains}\label{s:other-parameter-domains}
\tgreen{Completed this section. --Matthias}

From the study of a general parametric polytope, it is not difficult to derive
results when the multi-parameter $b \in \R^N$ is itself a function of another
parameter $t \in \R^q$, $b = b(t_1,\dots,t_q)$.  
We restrict ourselves to the setting where $b$ is a (homogeneous) linear function
of~$t$, which we write as $b(t) = Tt$, where $T \in \R^{N\times q}$ is a matrix. 
This is sufficient for two popular settings, which we explain in the
following sections.  In
section~\ref{subsubdilated}, we will consider the case of a fixed
semi-rational polytope $\p$ dilated by a one-dimensional real parameter $t\geq
0$.  In section~\ref{subsubMinkowski}, we will consider the more general case
of a Minkowski linear system $t_1\p_1+\dots + t_q\p_q$.

To describe how the specialization yields the function $t \mapsto E^L(\alpha, h, \tau)(Tt)$,
let us first describe the alcoves.  Let $T^* \in \R^{q\times N}$ be the
adjoint (transpose) matrix. The linear
forms $\eta \in \Psi^L_\tau(\alpha) \subset \Q^N$ defining the alcoves of~$\R^N$
(see \autoref{th:ehrhart-chamber}) give rise to 
linear forms on~$\R^q$,
\begin{equation}
  \langle T^* \eta, t \rangle
  = \langle \eta, Tt \rangle \quad\text{for $t \in \R^q$}.
\end{equation}
Thus we consider the alcoves of~$\R^q$ defined by the finite set
$T^*(\Psi^L_\tau(\alpha)) \subset \R^q$.  Note that, when $T$ is not rational,
$T^*(\Psi^L_\tau(\alpha))$ will no longer be rational, in contrast to the development
in \autoref{sub:quasipoly}.  The function  $t \mapsto E^L(\alpha, h, \tau)(Tt)$ 
will therefore belong to the subalgebra $\polypp\CP^{T^*(\Psi^L_\tau(\alpha))}(\R^q)$
of semi-quasi-polynomials.  (When $T$ is rational, this is a subalgebra of quasi-polynomials.)

Using this notation, we can formulate a theorem analogous to
\autoref{th:ehrhart-chamber}.  We omit the statement.

In contrast to \autoref{th:ehrhart-chamber}, we no longer know the precise
local degree of the semi-quasi-polynomial $t \mapsto E^L(\alpha, h, \tau)(Tt)$. 
The ``expected'' degree is $m+d$, but there may be cancellations of terms, as
illustrated by the example  $\p =[-1,1]$, $h(x) = x$ given in the introduction.  

\subsubsection{Case of a dilated polytope}\label{subsubdilated}

A first example appeared in the introduction as
Example~\ref{ex:irrational-rectangle}, which already illustrated that in the
case of semi-rational polytopes~$\p$ which are not rational, we may not get
quasi-polynomials of the dilation factor~$t$ but merely semi-quasi-polynomials.
Let us give a few more examples for the rational case.
\begin{figure}
\begin{center}
 \includegraphics[width=6cm]{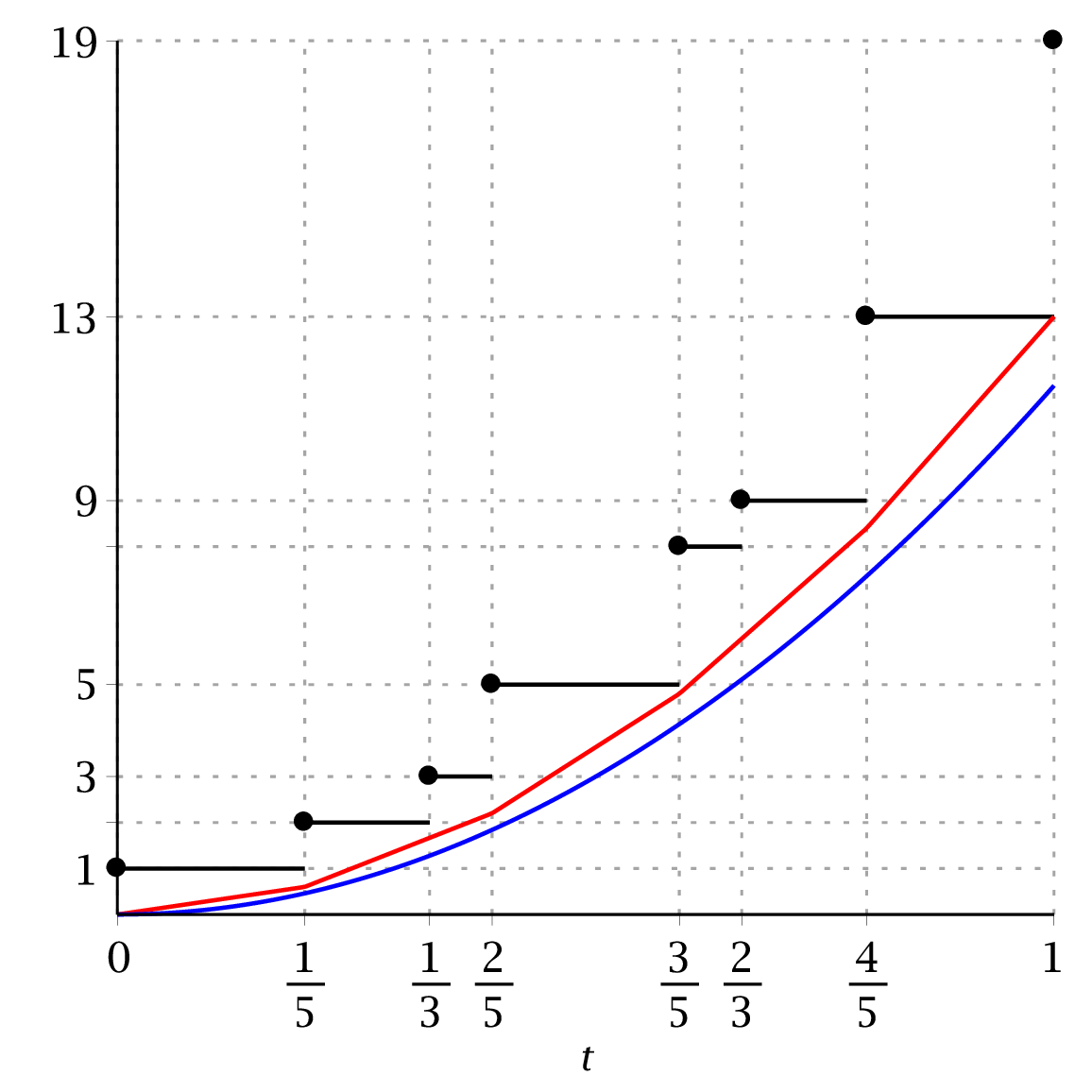}\includegraphics[width=6cm]{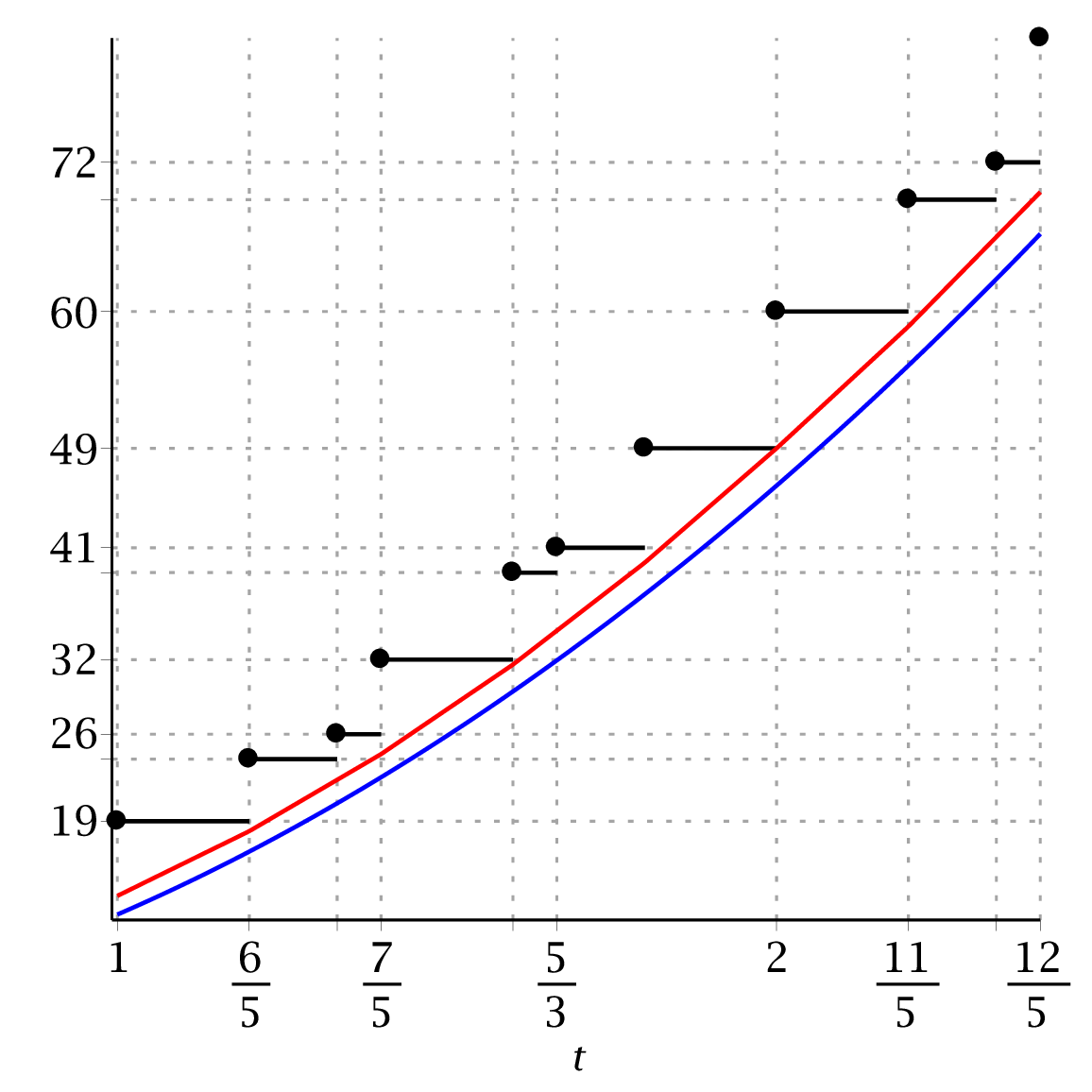}
\caption{$S^L(t\p,1)$ for the quadrilateral  of  Example
  \ref{ex:parametric-dim2-rational-Ehrhart}, for $L=\{0\}$ (\emph{black}), $L$
  vertical (\emph{red}), $L=\R^2$  (\emph{blue}). On the left, $ t$ varies from $0$ to $1$,
  new lattice points occur for $t=0, \frac15, \frac13, \frac{2}{5}, \frac35, \frac23, \frac45,1$. On the right,  $ t$ varies from $1$ to $2.4$, new lattice points occur for $t= \frac65, \frac43, \frac75, \frac85,\frac53,2,\frac{11}5,\frac73,\frac{12}5 $. For $L$ vertical, $S^L(t\p,1)$ is continuous, but its derivative has discontinuities. }
\label{fig:dilatedtetra-rational}
 \end{center}
\end{figure}
\begin{example}[Continuation of Examples \ref{ex:parametric-dim2},
  \ref{ex:parametric-dim2-vertices} and
  \ref{ex:parametric-dim2-chambers}]\label{ex:parametric-dim2-rational-Ehrhart}
\let\frac=\tfrac
Fix $b_0=(0,0,5,3)$, so that $\p=\p(b_0)$ is the quadrilateral of Figure \ref{fig:parametric-dim2}
with vertices $[0, 0], [0, 3], [1, 4], [5, 0]$.
We specialize the formula for   $E^{L}(\alpha,1,\tau_2)(b)$
to the line $b=tb_0$. We consider the cases $L=\{0\}$, $L$ the vertical line, and $L=V$.

\begin{enumerate}[\rm(a)]
\item
First, with $L=V$, we compute the volume.
For $t\geq 0$,   $S^V(t\p)=\frac{23}{2}t^2$. It is a polynomial function of $t$ with rational coefficients.

\item
Next, with $L=\{0\}$, we count the lattice points of $t\p$, for $t\geq 0$.
\begin{equation*}
  \begin{aligned}
    S^{\{0\}}(t\p,1)&=\frac{23}{2}t^2+(\frac{13}{2}-\{3t\} -4\{5t\})t \\
    &\qquad-\frac{1}{2}\{3t\}^2-\{4t\}^2+\{4t\}\{3t\}+\{5t\}\{4t\}\\&\qquad -\{5t\}-\frac{1}{2}\{3t\}+1.
  \end{aligned}
\end{equation*}
It takes only integral values, and is locally constant over some rational intervals.
These facts are more apparent on the graph (Figure~\ref{fig:dilatedtetra-rational}) than on the formula.
When $t$ is in $\Z$, all terms $\{qt\}$, for $q\in \Z$, are equal to $0$,
and we obtain the usual Ehrhart polynomial of $\p$  over $\Z$ (it is a polynomial as $\p$ has integral vertices)
$$\frac{23}{2}t^2+\frac{13}{2} t+1.$$
The value at $t=1$  is $19$, the number of integral points in $\p$.

\item
When  $L$ is the vertical line, we add the lengths of the vertical segments in $t\p$, for $t\geq 0$.
$$
S^{L}(t\p,1)
=\frac{23}{2}t^2+\frac{3}{2}t+\frac{1}{2}\{5t\}^2+\{t\}^2-\{4t\}^2+\frac{1}{2}\{5t\}-\{t\}.
$$
This is a continuous function of $t$. Its value at $t=1$ is $13$.
\end{enumerate}
\end{example}

\tgreen{Reworked the following paragraphs and updated the notation in the theorem. --Matthias}

Now we describe how this specialization works in general.  Let  $L$ be a rational
subspace of $V$.  We take a
(semi-)rational polytope $\p = \p(b_0)$ associated to a fixed real multi-parameter~$b_0$, 
and specialize the formula for $E^L(\alpha,h,\tau)(b)$, where $b_0 \in \bar\tau$, 
when $b = tb_0$ for $t \in \R$, $t>0$.  Using the notation from
\autoref{s:other-parameter-domains}, we have the matrix $T = (b_0) \in \R^{N\times 1}$. 
We then compute the finite set $\Psi := T^*(\Psi^L_\tau(\alpha)) \subset \R$.  
It can be described in a simpler way as follows.  
Denote by $\Psi^L_\p$ the union of the sets
$\Psi_\c^L \subset \Lambda^*\cap L^{\perp}$, described in
\autoref{sub:quasipoly}, where $\c$ varies over the cones  of feasible
directions at the vertices of $\p$ (they are rational polyhedral cones). 

Then $\Psi$ is the finite set of real numbers $\langle \gamma, s\rangle$, where
$\gamma$ runs in $\Psi^L_\p$, and $s$ runs over the vertices of~$\p$.  It
describes the alcoves of~$\R$.  Thus the function $t\mapsto S^L(t\p,h)$ is a
(semi-)quasi-polynomial, which coincides with a polynomial function of
$t$ on intervals with possibly irrational ends, as in Example~\ref{ex:irrational-rectangle}.
Its coefficient functions are bounded functions of the variable $t\in \R$.
If $\p$ is rational, then $\Psi$ is rational, and thus the coefficient
functions are periodic functions.  

We summarize this discussion in the following result.

\begin{theorem}\label{th:ray}
  Let $\p$ be a (semi-)rational polytope and $h$ a polynomial function of degree $m$ on $V$.
  Let $\Psi \subset \R$ be the set of real numbers $\langle \gamma, s\rangle$, where
  $\gamma$ runs in $\Psi^L_\p$ and $s$ runs over the vertices of~$\p$.  
  \begin{enumerate}[\rm(i)]
  \item There exists a (semi-)quasi-polynomial
    $$
    E^L(\p,h)(t)=\sum_{r=0}^{d+m} E^L_r(\p,h)(t) \, t^r,
    $$
    such that $S^L(t\p,h)= E^L(\p,h)(t)$ for all $t\in \R$ with $t\geq 0$.
    It belongs to $\polypp\CP^\Psi_{[\leq m+d]}(\R)$.
  \item The coefficient functions $E^L_r(\p,h)(t)$ are step-polynomials; they
    belong to the space $\polypp_{[\leq m+d-r]}^\Psi(\R)$. 
  \item Let $\p$ be rational and $q\in \N$ is such that $q\p$ has lattice vertices.
    Then the coefficient functions $E^L_r(\p,h)(t)$ are rational
    step-polynomials on~$\R$; they are periodic functions with period~$q$.
    Thus $E^L(\p,h)(t)$ is 
    a quasi-polynomial.
  \end{enumerate}
\end{theorem}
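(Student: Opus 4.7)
The plan is to derive Theorem~\ref{th:ray} from Theorem~\ref{th:ehrhart-chamber} by restricting the multi-parameter~$b$ to the ray $b = tb_0$, $t\geq 0$, as in section~\ref{s:other-parameter-domains}. Fix an admissible $\mu$-chamber~$\tau$ with $b_0 \in \overline{\tau}$ (which exists because $\p=\p(b_0)$ is nonempty); since $\overline{\tau}$ is a closed cone, $tb_0 \in \overline{\tau}$ for every $t \geq 0$, so Theorem~\ref{th:ehrhart-chamber} yields
$$
S^L(t\p, h) \;=\; E^L(\mu,h,\tau)(tb_0), \qquad t\geq 0.
$$
Define $E^L(\p,h)(t) := E^L(\mu,h,\tau)(tb_0)$. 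It remains to identify the algebra in which this function lies and to control its degrees.

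Next I would analyze how each building block of $E^L(\mu,h,\tau)(b)$ specializes. A step-linear generator $b \mapsto \{\langle \eta, b\rangle\}$ with $\eta \in \Psi^L_\tau(\mu)$ becomes $t \mapsto \{\langle \eta, b_0\rangle\, t\}$. By Definition~\ref{def:polyppB}, any such $\eta$ satisfies $\langle \eta, b\rangle = \langle \gamma, s_B(b)\rangle$ for some $B \in \CB_\tau$ and some $\gamma \in \Psi^L_{\c_B}$, so $\langle \eta, b_0\rangle = \langle \gamma, s_B(b_0)\rangle$. Because $s_B(b_0)$ is a vertex of $\p$ and because the cones $\c_B$, $B\in\CB_\tau$, cover (in the sense of Proposition~\ref{th:brion-for-parametric-polytopes}) the cones of feasible directions at the vertices of~$\p$, one may choose $\Psi^L_\p$ so that each such value $\langle \gamma, s_B(b_0)\rangle$ belongs to the set $\Psi$ of the theorem. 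The polynomial monomials $b^j$ specialize to constants times $t^{|j|}$. Therefore, if a summand of $E^L(\mu,h,\tau)(b)$ lies in $\polypp_{[\leq m+d-r]}^{\Psi^L_\tau(\mu)}(\R^N) \otimes \CP_{[r]}(\R^N)$---as guaranteed by Theorem~\ref{th:ehrhart-chamber}(iii)---then its specialization lies in $\polypp_{[\leq m+d-r]}^{\Psi}(\R) \cdot t^r$. Summing over $r = 0, 1, \ldots, m+d$ proves parts~(i) and~(ii).

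For part~(iii), assume $\p$ is rational with $q\in\N$ such that $q s \in \lattice$ for every vertex~$s$ of~$\p$. Since $\Psi^L_\p \subset \lattice^* \cap L^\perp$, every $\alpha = \langle \gamma, s\rangle \in \Psi$ satisfies $q\alpha = \langle \gamma, qs\rangle \in \Z$; hence $\{\alpha(t+q)\} = \{\alpha t\}$, every element of $\polypp^\Psi_{[\leq k]}(\R)$ produced above is $q$-periodic, $\Psi \subset \Q$, and $E^L(\p,h)(t)$ is a rational quasi-polynomial whose coefficients $E^L_r(\p,h)(t)$ are $q$-periodic rational step-polynomials. The main delicate point I expect is the identification of $\Psi$ in the previous paragraph when $b_0$ lies on the boundary of~$\overline{\tau}$: several $B\in\CB_\tau$ can then collapse to the same vertex of~$\p$, and the cone of feasible directions there is the \emph{sum} of the corresponding $\c_B$ rather than any single one, so the canonical $\Psi^L_\p$ may differ from the $\Psi^L_{\c_B}$ we pick up under specialization. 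One resolves this by exploiting the non-uniqueness of $\Psi^L_\c$ in its definition (it depends on the chosen unimodular decomposition) and simply enlarging $\Psi^L_\p$ to contain $\Psi^L_{\c_B}$ for every $B\in\CB_\tau$; this harmless enlargement validates the argument without affecting the statement.
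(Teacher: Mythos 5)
Your proof is correct and follows essentially the same route as the paper: specialize the parametric result (Theorem~\ref{th:ehrhart-chamber}) along the ray $b=tb_0$ inside $\overline\tau$, track how step-linear generators and polynomial monomials transform, and read off the degree bounds and, in the rational case, the period~$q$. The subtlety you flag about $b_0$ lying on the boundary of $\overline\tau$ (several $B\in\CB_\tau$ collapsing to one vertex, so that the tangent cone is a union of the $\c_B$) is genuine, and your resolution---using the freedom in choosing the signed unimodular decompositions defining $\Psi^L_\c$ so that $\Psi^L_\p$ absorbs all the $\Psi^L_{\c_B}$---is exactly the (implicit) choice the paper makes when it identifies $T^*(\Psi^L_\tau(\mu))$ with the set built from $\Psi^L_\p$.
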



One can also prove the  theorem directly with a proof similar to that of
Theorem \ref{th:ehrhart-chamber}, based on Brion's theorem for $\p$,  without embedding~$\p$ in a parametric family.
This was done in \cite{so-called-paper-2} under the assumption that $\p$ is rational.

\begin{figure}
\begin{center}
 \includegraphics[width=6cm]{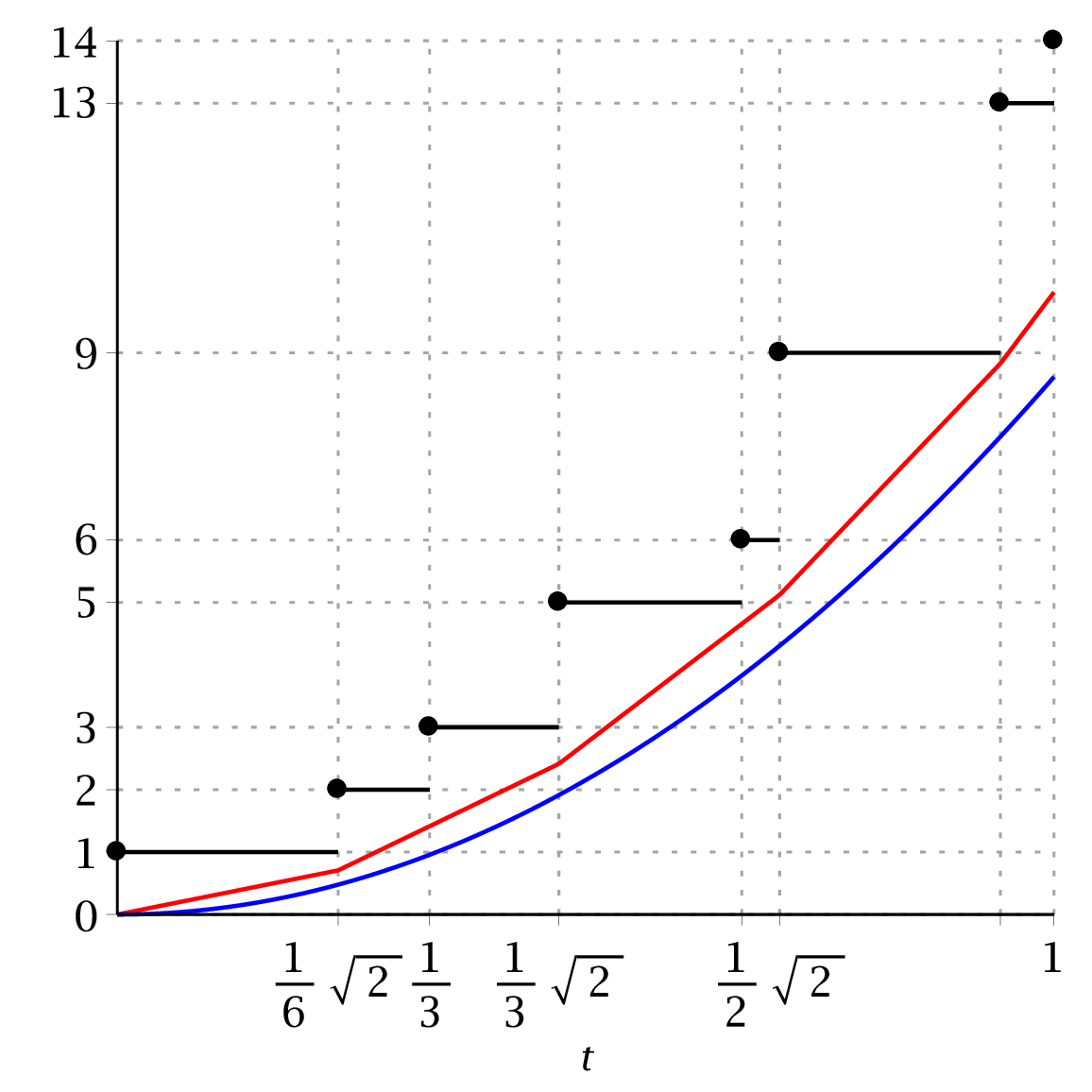}\includegraphics[width=6cm]{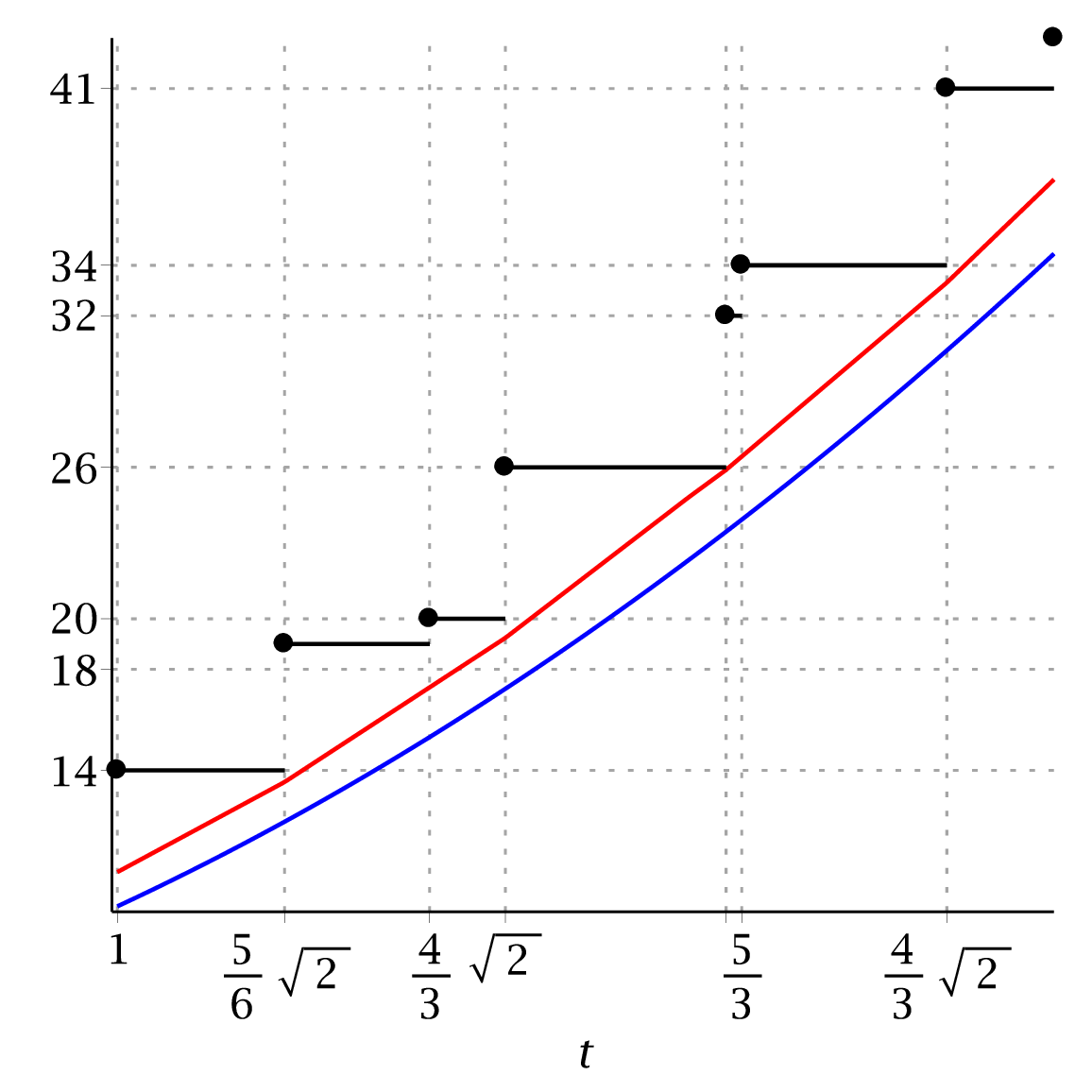}
\caption{$S^L(t\p_I,1)$ where $\p_I$ is the  quadrilateral  with irrational vertices of  Example \ref{ex:parametric-dim2-irrational-Ehrhart}, for $L=\{0\}$ (\emph{black}), $L$ vertical (\emph{red}), $L=\R^2$  (\emph{blue}). On the left, $ t$ varies from $0$ to $1$. On the right,  $ t$ varies from $1$ to $2$. }
\label{fig:dilatedtetra-irrational}
 \end{center}
\end{figure}
\begin{example}[continuation of Examples \ref{ex:parametric-dim2}, \ref{ex:parametric-dim2-chambers}]\label{ex:parametric-dim2-irrational-Ehrhart}
\let\frac=\tfrac
Consider now  the  quadrilateral $\p_I=\p(b_I)$  with
$b_I=[0,0,3\sqrt{2},3]$. Its four vertices are
$[0,0]$, $[0,3]$, $[-\frac{3}{2}+\frac{3}{2}\,\sqrt
{2},\frac{3}{2}+\frac{3}{2}\,\sqrt {2}]$, and $[3\,\sqrt {2},0]$.

We specialize the formula which gives    $E^{L}(\alpha,1,\tau_2)(b)$
for $b=tb_I$, when $L=V$, $L$ is the vertical line, $L=\{0\}$, respectively.
\begin{enumerate}[\rm(a)]
\item First, for $L=V$, $E^{L}(\alpha,1,\tau_2)(tb_I)$ is the volume given by
$$
V(t\p_I)= \frac{9}{4} \bigl( 1+2\,\sqrt {2} \bigr) {t}^{2}.
$$
This is a polynomial function of $t$ with real coefficients.

\item
When $L$ is the vertical line, we add the lengths of the vertical segments in
$t\p_I$, for $t\geq 0$.  $S^{L}(t\p_I,1)=E_2^L(t)\, t^2 +E_1^L(t)\,
t+E_0^L(t)$ with coefficient functions
\begin{align*}
  E_2^L(t)&= \frac{9}{4} \bigl( 1+2\,\sqrt {2} \bigr),
  \qquad E_1^L(t)=\frac{3}{2},\\
  E_0^L(t)&= -1/2 \bigl\{ 3\sqrt {2}t \bigr\} ^{2}+\frac{1}{2}\bigl\{ 3\sqrt {2}t \bigr\} -\bigl\{
  -\frac{3}{2}t+ \frac{3}{2}\sqrt {2}t \bigr\} \\ &\qquad + \bigl\{ -\frac{3}{2}t+\frac{3}{2}\sqrt{2}t
  \bigr\}^{2}.
\end{align*}
This is a semi-quasi-polynomial, but not a quasi-polynomial, because
the coefficient function $E^L_0(t)$ is not periodic but merely bounded.

\item
Finally, we compute the number of integral points $S^{\{0\}}(t\p_I,1)$ in
$t\p_I$, for $t\geq 0$. We have
$
S^{\{0\}}(t\p_I,1)=E_2(t)\,t^2+E_1(t)\,t+E_0(t),
$
where the coefficient functions $E_r(t)$ are step-polynomial functions of $t$, 
\begin{align*}
  E_2(t)&= \frac{9}{4} \bigl( 1+2\,\sqrt {2} \bigr), \\
  E_1(t)&= \frac{3}{2}+\frac{3}{2}\{ 3\,t \} -\frac{3\sqrt {2}}{2}\{
    3\sqrt {2}\,t \} -\frac{3}{2}\{ 3\sqrt {2}\,t
    \} -\frac{3\sqrt {2}}{2}\{ 3t \} +3\sqrt {2} ,\\
  E_0(t)&= 1-1/2\,\{ 3\,t \} -\{ 3\,\sqrt {2}\,t\} -\frac{1}{2}\,  \{ 3\,t \}^{2}- \{ \frac{3}{2}\,t+\frac{3}{2}\,\sqrt {2}\,t \} ^{2}\\
  & \qquad +\{ \frac{3}{2}\,t+ \frac{3}{2}\,\sqrt {2}\,t \} \{ 3\,t \} +\{
  3\,\sqrt {2}\,t \} \{ \frac{3}{2}\,t+\frac{3}{2}\,\sqrt {2}\,t \}.
\end{align*}
%
%
%
%
%
%
Thus again $S^{\{0\}}(t\p_I,1)$ is a semi-quasi-polynomial, but not a quasi-polynomial.  
It takes only integral values, and  is constant over some intervals with  end points of the form $\frac{n}{3}, \frac{n}{3 \sqrt{ 2}}, \frac{n}{3 (1+\sqrt{ 2})}$ with $n$ an integer.
\end{enumerate}
The graphs of these three semi-quasi-polynomials are displayed in Figure~\ref{fig:dilatedtetra-irrational}.
\end{example}

\bigskip
Let us discuss some qualitative properties of the (semi-)quasi-polyno\-mial function
$E^L(\p,h)(t)=\sum_{r=0}^{d+m} E^L_r(\p,h)(t)\, t^r,$ defined in Theorem \ref{th:ray}.
The coefficient functions
$E^L_r(\p,h)(t)$ are given by polynomial formulae (with rational coefficients)
of  functions $\{r_j t\}$, where some $r_j$ may be irrational.  In particular,
$E^L_r(\p,h)(t)$ is a bounded 
function on $\R$. (If the polytope~$\p$ is rational, the coefficients $r_j$ are
rationals, and $E^L_r(\p,h)(t)$ is a periodic function on~$\R$ and thus
$E^L(\p,h)(t)$ is an ordinary quasi-polynomial function on~$\R$.)
The individual function $\{r_jt\}$ is right-continuous if
$r_j>0 $, and coincides with the affine linear function $r_jt- n$ on the semi-open interval $[n/r_j, (n+1)/r_j[$.
If $r_j<0$, $\{r_jt\}$ is left-continuous.
It follows that there is a sequence $0\leq d_1<d_2<\cdots<d_i<d_{i+1}<\cdots$ such that
$E^L_r(\p,h)(t)$ is given by a polynomial formula on
the open interval  $]d_i,d_{i+1}[$. At the points $d_i$, the function $E^L_r(\p,h)(t)$ may be left-continuous, right-continuous, or $d_i$ can be a point of discontinuity.
In section \ref{sub:dilatedsimplex}, examples are given of  functions $E^L(\p,h)(t)$  having all types of discontinuity on a discrete set of points (left continuity, right continuity, or discontinuous).

Note, however, that if $0\in \p$ (as in Examples \ref{ex:parametric-dim2-rational-Ehrhart} and  \ref{ex:parametric-dim2-irrational-Ehrhart}), then the function $E^L(\p,h)(t)$ is right-continuous.

\subsubsection{Case of Minkowski linear systems}\label{subsubMinkowski}

Let $\p_1,\p_2,\ldots, \p_q$ be  semi-rational polytopes in $V$.
Then it is well known that there exists a parametric polytope $\p(b)$ defined by
$\alpha_j(x)\leq b_j$ for $1\leq j\leq N$,
a chamber  $\tau\subset \R^N$ and values   $b^1,b^2,\ldots, b^q\in \overline{\tau}$ of the multi-parameter $b$,
such that, for all $t_i\geq0$, the  Minkowski sum $t_1\p_1+t_2\p_2+\cdots +t_q\p_q$ is the polytope
$\p(t_1b^1+ t_2 b^2+\cdots+t_q b^q)$, see for instance \cite{MR0326574}.

Let us recall how to determine this parametric family. For simplicity, we take the case  of  two polytopes $\p_1,\p_2$. Then
 the outer normals $\alpha_j$  and the values $b^1_j,b^2_j$ are determined in the following way.
 Without loss of generality, we can assume that $\p_1+\p_2$ is full-dimensional.
 Let $(\alpha_1,\dots , \alpha_N) $ be the list of outer normal vectors to the facets of $\p_1+\p_2$.
 Then, for each index $j$, we have  $b^1_j:=\max (\langle\alpha_j,x\rangle, x\in \p_1)$ and
 $b^2_j:=\max (\langle\alpha_j,x\rangle, x\in \p_2)$. In other words, the facet $\f(\p_1+\p_2,\alpha_j)$
  where $\langle\alpha_j,x\rangle$ reaches its maximum on $\p_1+\p_2$ is the Minkowski sum of the face $\f(\p_1,\alpha_j)$ of $\p_1$ where $\langle\alpha_j,x\rangle$ reaches its maximum on $\p_1$ and the analogous  face $\f(\p_2,\alpha_j)$ of $\p_2$.

Therefore we obtain the following corollary of Theorem \ref{th:ehrhart-chamber}:

\begin{theorem}\label{th:Ehrhart-for-Minkowski-sum}
 Let $\p_1,\p_2,\ldots, \p_q$ be (semi-)rational polytopes in $V$, $L\subseteq V$ a rational subspace  and
 $h(x)$ a weight on $V$.
 There exists a (semi-)quasi-polynomial function $E(t_1,t_2,\ldots,t_q)$ on $\R^q$
 such that for $t_1\geq 0,\ldots, t_q\geq 0$,
 the  intermediate weighted sum on the polytope $t_1\p_1+t_2\p_2+\cdots +t_q \p_q$ is given by
 $$
  S^L(t_1\p_1+t_2\p_2+\cdots +t_q \p_q, h)=E(t_1,t_2,\ldots, t_q).
  $$
\end{theorem}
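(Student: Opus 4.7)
The plan is to reduce the statement to Theorem~\ref{th:ehrhart-chamber} via the embedding of the Minkowski linear system into a single parametric family of polytopes, as sketched in the paragraph preceding the statement, followed by a linear specialization of the multi-parameter $b$ as described in section~\ref{s:other-parameter-domains}.

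First, I would construct the parametric polytope. Assume without loss of generality that $\p_1 + \cdots + \p_q$ is full-dimensional in $V$ (otherwise restrict to its affine hull). Let $(\mu_1,\dots,\mu_N)$ be the list of outer normal vectors to the facets of the Minkowski sum $\p_1+\cdots+\p_q$; these are elements of $\Lambda^*$ (up to rescaling) because each $\p_i$ is semi-rational. For each $i \in \{1,\dots,q\}$ and each $j \in \{1,\dots,N\}$, set $b^i_j := \max\{\langle \mu_j, x\rangle : x \in \p_i\}$, and let $b^i = (b^i_1,\dots,b^i_N) \in \R^N$. Then define $\p(b)$ by the inequalities $\langle \mu_j, x\rangle \leq b_j$ as in \eqref{def:pb}. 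The key combinatorial fact (which I would verify using the standard description of the supporting face of a Minkowski sum as the Minkowski sum of supporting faces) is that for all $t_1,\dots,t_q \geq 0$,
\begin{equation*}
t_1 \p_1 + t_2 \p_2 + \cdots + t_q \p_q = \p(t_1 b^1 + t_2 b^2 + \cdots + t_q b^q).
\end{equation*}

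Next, I would exhibit a single admissible $\mu$-chamber~$\tau$ such that each $b^i$ lies in $\overline{\tau}$. The cone $\overline{\tau}$ is stable under nonnegative linear combinations, so for $t_i \geq 0$ the multi-parameter $b(t) := \sum_i t_i b^i$ also lies in $\overline{\tau}$. To build such a $\tau$, I would observe that there exists at least one $b^\star$ in the relative interior of the cone generated by the $b^i$'s (a small positive linear combination works), and by construction the polytope $\p(b^\star)$ has the same normal fan as $\p_1+\cdots+\p_q$; hence $b^\star$ lies in some admissible chamber~$\tau$, and the remaining $b^i$ lie in its closure since each face of $\p(b^i)$ is obtained by a degeneration (face collapse) from the corresponding face of $\p(b^\star)$.

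Once these steps are in place, apply Theorem~\ref{th:ehrhart-chamber} to obtain the quasi-polynomial $E^L(\mu,h,\tau) \in \polypp\CP^{\Psi^L_\tau(\mu)}_{[\leq m+d]}(\R^N)$, and then specialize via the linear map $T : \R^q \to \R^N$, $T(t_1,\dots,t_q) = \sum_i t_i b^i$, along the lines of section~\ref{s:other-parameter-domains}. Define
\begin{equation*}
E(t_1,\dots,t_q) := E^L(\mu,h,\tau)\bigl(T(t_1,\dots,t_q)\bigr).
\end{equation*}
This is a (semi-)quasi-polynomial in $\polypp\CP^{T^*(\Psi^L_\tau(\mu))}(\R^q)$, and by \eqref{eq:Ehrhart-quasi-poly} together with the identification above, it coincides with $S^L(t_1\p_1+\cdots+t_q\p_q,h)$ for all $t_i \geq 0$. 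If the polytopes $\p_i$ are rational, then $T$ is rational and we obtain an ordinary quasi-polynomial; in the semi-rational case we only get a semi-quasi-polynomial, consistent with Example~\ref{ex:irrational-rectangle}.

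The main obstacle is the second step: producing a single chamber~$\tau$ in whose closure all the $b^i$ lie simultaneously. The choice of the $\mu_j$ as outer normals of the \emph{sum} $\p_1+\cdots+\p_q$, rather than of individual $\p_i$, is what makes this work, because the normal fan of the Minkowski sum refines that of each summand; once this is granted, the rest of the argument is a direct invocation of Theorem~\ref{th:ehrhart-chamber} and the specialization formalism. Everything else (bounds on local degree, periodicity in the rational case, identification of top-degree terms with volumes) is inherited automatically from that theorem.
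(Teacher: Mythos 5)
Your proposal follows essentially the same route as the paper. The paper treats this theorem precisely as a corollary of Theorem~\ref{th:ehrhart-chamber} via the embedding of the Minkowski linear system into a parametric family (outer normals of the sum $\p_1+\cdots+\p_q$, values $b^i_j = \max_{x\in\p_i}\langle\mu_j,x\rangle$, all $b^i$ in the closure of a single chamber, then linear specialization $b=\sum_i t_i b^i$), citing \cite{MR0326574} for the well-known embedding fact rather than re-deriving the chamber claim. You supply more detail on why a single chamber $\tau$ works — the normal-fan refinement argument — but the strategy is identical, and your use of section~\ref{s:other-parameter-domains} for the specialization matches the paper's intent.
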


An example has already appeared in the introduction as Example~\ref{ex:minkowski}
(\autoref{fig:minkowski}).

\begin{remark}
The results in \cite{HenkLinke} were obtained by writing any  ``vector dilated
polytope'' as a Minkowski linear system.\footnote{Again we note that their
  results are for the rational case only.}
In the present article,  we take the opposite route,  starting with Brion's theorem.
\end{remark}

\section{Patched sums and highest polynomial degree terms of weighted Ehrhart quasi-polynomials}\label{sect:fullBarvinok}

Let $\p(b)$ be our parametric polytope.
Our next concern is to study the highest polynomial degree terms of the weighted Ehrhart quasi-polynomial
$$E(\alpha,h,\tau)(b)=E^{\{0\}}(\alpha,h,\tau)(b)=\sum_{x\in \p(b)\cap \Lambda}h(x)$$
where $b$ varies in the chamber $\tau$.

Following Barvinok \cite{barvinok-2006-ehrhart-quasipolynomial},
we introduce some particular linear combinations of intermediate weighted sums on polytopes and the analogous
linear combinations of intermediate generating functions of a polyhedron.

\subsection{Patched generating function associated with a family of slicing subspaces}\label{sub:patch}

 Let $\CL$ be a finite family
of linear subspaces  $L\subseteq V$ which is closed under sum.
Consider the subset $\bigcup_{L\in \mathcal L}L^{\perp}$ of $V^*$.
 As the family $\{\,L^{\perp}: L\in \mathcal L\,\}$ is stable under intersection,
 there exists a unique function $\rho$ on $\mathcal L$ such that
$$
\Bigl[\bigcup_{L\in \mathcal L} L^{\perp}\Bigr]=\sum_{L\in \mathcal L}\rho(L)[L^{\perp}].
$$
We call $\rho$  the \emph{patching function} on $\mathcal L$.
Let us recall its relationship with the M\"obius function of the poset $\CL$ (\cite{barvinok-2006-ehrhart-quasipolynomial}, \cite[Lemma~4.1]{SLII2014}).
Let $ \hat{\mathcal L}$ be the poset obtained by adding a smallest element $ \hat{0}$ to $\CL$.
 Denote by $\mu$ its M\"obius function.
\begin{lemma}\label{lemma:patching-Mobius}
 The {patching function} $\rho$ on $\mathcal L$  satisfies
$$
\rho(L)=-\mu( \hat{0},L).
$$
\end{lemma}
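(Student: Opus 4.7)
The plan is to exploit the fact that the family $\{L^\perp : L \in \CL\}$ being closed under intersection corresponds, via orthogonality, to $\CL$ being closed under sum; this lets us reduce the defining inclusion-exclusion identity to a standard Möbius inversion on $\hat{\CL}$.

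First I would evaluate the identity $\bigl[\bigcup_{L\in \CL} L^{\perp}\bigr]=\sum_{L\in \CL}\rho(L)[L^{\perp}]$ pointwise at an arbitrary $\xi\in V^*$. Note that $\xi\in L^\perp$ is equivalent to $L\subseteq \ker\xi$. Hence the set $\CL_\xi := \{L\in\CL : \xi\in L^\perp\}$ is closed under sum (since $\CL$ is), and, when nonempty, admits a unique maximum element $L^{\max}_\xi := \sum_{L\in \CL_\xi} L \in \CL$. In terms of the partial order on $\CL$ by inclusion, one has $\xi\in L^\perp \iff L\leq L^{\max}_\xi$. Thus the identity reduces to the two conditions:
\begin{equation*}
\sum_{L\in \CL,\ L\leq L^{\max}_\xi} \rho(L) \;=\; 1 \quad\text{for every }\xi\in \textstyle\bigcup_{L} L^\perp,
\end{equation*}
together with the trivially satisfied case $\xi\notin \bigcup_L L^\perp$ (both sides vanish).

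Next I would invoke the defining recursion of the Möbius function of $\hat{\CL}$: by definition of $\mu$, one has $\mu(\hat{0},\hat{0})=1$ and, for every $L\in\CL$,
\begin{equation*}
\sum_{M\in\hat{\CL},\ M\leq L}\mu(\hat{0},M) \;=\; 0,
\qquad\text{hence}\qquad \sum_{M\in\CL,\ M\leq L}\mu(\hat{0},M) \;=\; -1.
\end{equation*}
Setting $\rho(L) := -\mu(\hat{0},L)$ therefore yields $\sum_{L\leq L_0,\, L\in\CL}\rho(L)=1$ for every $L_0\in\CL$. Applying this with $L_0 = L^{\max}_\xi$ for each $\xi$ in $\bigcup L^\perp$ verifies the pointwise identity above, and hence the equality of indicator functions.

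Finally, uniqueness of $\rho$ (claimed in the statement) follows from the classical fact that the indicator functions $[W]$ of pairwise distinct linear subspaces $W$ of a vector space are linearly independent, so the decomposition of $[\bigcup_L L^\perp]$ as a linear combination of the $[L^\perp]$ has at most one set of coefficients. I do not anticipate any genuine obstacle: the main conceptual point is simply to recognize the maximum element $L^{\max}_\xi$ as the mechanism that turns the pointwise verification into one instance of the Möbius recursion on $\hat{\CL}$.
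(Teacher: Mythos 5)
Your proof is correct. The paper itself does not prove this lemma (it simply cites Barvinok's paper and the companion article \cite{SLII2014}), but your argument is the standard one: evaluating the defining indicator-function identity at a point~$\xi$, using sum-closedness of~$\CL$ to produce the maximal element $L^{\max}_\xi$, and reducing the pointwise condition $\sum_{L\leq L^{\max}_\xi}\rho(L)=1$ to the defining recursion of the M\"obius function of~$\hat\CL$; the linear-independence observation correctly handles uniqueness. No gaps.
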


We consider the following linear combination of intermediate generating functions.
\begin{definition}\label{def:Barvinok-sum}
The \emph{Barvinok patched generating function} of a semi-rational polyhedron
$\p\subseteq V$ (with respect to the family $\CL$) is
$$
S^{\CL}(\p)(\xi)=\sum_{L\in\CL}\rho(L) S^L(\p)(\xi).
$$
\end{definition}
Technically, we will use the shifted version,  similar to Definition \ref{def:Barvinok-sum}:
\begin{definition}\label{def:retroBarvinok-sum}
  $$
   M^\CL (s,\c )(\xi)=\e^{-\langle\xi,s\rangle}S^{\CL}(s+\c)(\xi)=\sum_{L\in\CL}\rho(L) M^L(s,\c)(\xi).
   $$
\end{definition}

We now define a subspace of the space $\CM_{\ell}(V^*)$ from Definition~\ref{def:Mell}.
\begin{definition}
  We introduce the notation $\CM_{[\geq q]}(V^*)$ for the space of functions
  $\phi$ in $\CM_{\ell}(V^*)$ such that $\phi_{[m]}(\xi)=0$ if $m<q$.
\end{definition}

We now state the approximation  theorem of \cite{SLII2014} for generating
functions of cones. 

\begin{theorem}[Theorem~\ref{sl2:th:better-than-Barvinok} in \cite{SLII2014}]\label{th:better-than-Barvinok}
Let $\c$ be a rational cone. Fix $k$, $0\leq k\leq d$. Let
${\CL_k}$ be a family of subspaces of $V$, closed under sum, such that
$\lin(\f)\in{\CL_k}$ for every face $\f$ of codimension $\leq k$ of $\c$.
Let $\rho(L),L\in {\CL_k}$, be the patching coefficients of  ${\CL_k}$, let $ S^{\CL_k} (s+\c )(\xi)$
be the Barvinok patched generating function of Definition \ref{def:Barvinok-sum}
and let $M^{\CL_k} (s,\c )(\xi)$ be as in Definition \ref{def:retroBarvinok-sum}. Then, for any $s\in V$,
\begin{equation}\label{eq:approx-M}
M(s,\c )(\xi)-M^{\CL_k} (s,\c )(\xi)\in  \CM_{[\geq -d+k+1]}(V^*),
\end{equation}
\begin{equation}\label{eq:better-than-Barvinok-cone}
S(s+\c )(\xi)-S^{\CL_k} (s+\c )(\xi)\in  \CM_{[\geq -d+k+1]}(V^*).
\end{equation}
\end{theorem}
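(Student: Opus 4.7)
The plan is to use the Poisson summation formula for intermediate generating functions on cones (which the excerpt tells us is proved in \cite{SLII2014}), combined with a Möbius-type cancellation argument driven by the patching function~$\rho$. The Poisson formula rewrites $S^L(s+\c)(\xi)$ as a sum of shifted continuous generating functions:
\begin{equation*}
S^L(s+\c)(\xi) = \sum_{\gamma \in \lattice^* \cap L^\perp} I(s+\c)(\xi + 2\pi i \gamma),
\end{equation*}
so that, in particular, $S(s+\c)(\xi) = \sum_{\gamma \in \lattice^*} I(s+\c)(\xi + 2\pi i \gamma)$. This is essentially the algebraic shadow of the fact that $M^L(s,\c)$ depends only on $s$ modulo $\lattice + L$.

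Using the defining identity $\bigl[\bigcup_{L\in\CL_k} L^\perp\bigr] = \sum_{L\in\CL_k}\rho(L)[L^\perp]$ for the patching function, the Poisson expansions for $S^L(s+\c)$ combine to give
\begin{equation*}
S^{\CL_k}(s+\c)(\xi) = \sum_{\gamma \in \lattice^* \cap \bigcup_{L\in\CL_k} L^\perp} I(s+\c)(\xi + 2\pi i \gamma).
\end{equation*}
Subtracting, the difference $S(s+\c)(\xi)-S^{\CL_k}(s+\c)(\xi)$ reduces to a sum of $I(s+\c)(\xi+2\pi i\gamma)$ taken only over those $\gamma\in\lattice^*$ which lie \emph{outside} every $L^\perp$ with $L\in\CL_k$. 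Note that $\gamma=0$ is in every $L^\perp$, so it is cancelled; the remaining $\gamma$ are nonzero.

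The core of the proof is then a homogeneous-degree estimate for $I(s+\c)(\xi+2\pi i\gamma)$, viewed as a function of $\xi$ near $0$. For a simplicial (or, after Barvinok decomposition, generally a cone), $I(s+\c)(\xi)$ has the shape $e^{\langle\xi,s\rangle} P(\xi)/\prod_j \langle\xi,w_j\rangle$, where $w_j$ run over the edge generators of $\c$. Shifting $\xi \mapsto \xi+2\pi i\gamma$, a factor in the denominator becomes singular at $\xi=0$ only if $\langle \gamma,w_j\rangle=0$, i.e., only if $\gamma \perp w_j$. Hence the lowest homogeneous component of $I(s+\c)(\xi+2\pi i\gamma)$, in the variable~$\xi$, has degree $-r(\gamma)$, where $r(\gamma)$ is the number of edges of $\c$ orthogonal to~$\gamma$. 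The hypothesis that $\lin(\f)\in \CL_k$ for every face $\f$ of codimension $\leq k$ means exactly that if $\gamma\notin\bigcup_{L\in\CL_k}L^\perp$, then $\gamma$ cannot be orthogonal to $\lin(\f)$ for any such face $\f$; equivalently, among the $w_j$ which are perpendicular to $\gamma$ one cannot find $d-k$ of them spanning a face. A dimension-counting argument then shows $r(\gamma)\leq d-k-1$, so each surviving term $I(s+\c)(\xi+2\pi i\gamma)$ lies in $\CM_{[\geq -d+k+1]}(V^*)$. This proves \eqref{eq:better-than-Barvinok-cone}, and \eqref{eq:approx-M} follows immediately from the definition $M^L(s,\c)=e^{-\langle\xi,s\rangle}S^L(s+\c)$ together with the fact that the exponential factor preserves the filtration $\CM_{[\geq q]}$.

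The main obstacle is the degree estimate for generic~$\gamma$: one must argue cleanly that the combinatorial condition ``$\gamma$ is not orthogonal to any face of codimension $\leq k$'' truly forces $r(\gamma)\leq d-k-1$ for arbitrary (not necessarily simplicial) $\c$. After a Brion-type simplicial decomposition this reduces to linear algebra, but care must be taken that the decomposition respects the face structure entering the hypothesis on $\CL_k$; this is precisely the point where the approach of \cite{SLII2014} uses a refined cone decomposition aligned with the subspaces in~$\CL_k$, so that the Poisson summation and the Möbius cancellation interact correctly facet-by-facet.
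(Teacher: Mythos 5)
The paper does not prove Theorem~\ref{th:better-than-Barvinok}: it imports it verbatim (with a citation in the theorem header) from \cite{SLII2014} and only \emph{uses} it, via Theorem~\ref{prop:computation-Ehrhart-terms}. So there is no in-paper proof to compare against; I can only assess your sketch on its own terms and against what the imported reference plausibly does.

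Your approach is the Fourier/Poisson one that Barvinok introduced in \cite{barvinok-2006-ehrhart-quasipolynomial}, and, judging from the paper's own remarks (``the Poisson summation formula for $S^L(s+\c)(\xi)$ obtained in \cite{SLII2014} is crucial\ldots'') and from the title of \cite{SLII2014}, this is indeed close in spirit to the route taken there. The structural skeleton is sound: the inclusion--exclusion identity $\bigl[\bigcup_L L^\perp\bigr]=\sum_L\rho(L)[L^\perp]$, applied pointwise at $\gamma\in\lattice^*$ after interchanging the $L$-sum and the $\gamma$-sum, correctly reduces the difference $S-S^{\CL_k}$ to a sum over $\gamma\in\lattice^*$ not lying in any $L^\perp$, $L\in\CL_k$; and for a \emph{unimodular simplicial} cone the pole order of $I(s+\c)(\xi+2\pi i\gamma)$ at $\xi=0$ is exactly the number of edges annihilated by $\gamma$, so the codimension-$\leq k$ hypothesis gives the required bound $r(\gamma)\leq d-k-1$ and hence membership in $\CM_{[\geq -d+k+1]}(V^*)$.

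Two points, both of which you flag but neither of which you actually close, are where the real work lies, so your write-up is a plan rather than a proof. First, the Poisson identity $S^L(s+\c)(\xi)=\sum_{\gamma\in\lattice^*\cap L^\perp}I(s+\c)(\xi+2\pi i\gamma)$ is not a convergent sum of functions: as written it is a distributional statement, and to make the interchange of the $\gamma$-sum with the $L$-sum and the subsequent degree-by-degree reading legitimate you must first establish it as an identity in $\CM_\ell(V^*)$ (typically: decompose into unimodular cones, regularize, and prove that the Laurent coefficients stabilize). Without this, the step ``subtract and read off the lowest homogeneous degree term by term'' has no meaning. Second, your degree estimate is genuinely a statement about \emph{simplicial} cones: the set of edges of a non-simplicial $\c$ annihilated by $\gamma$ need not span a face, and after a signed simplicial decomposition the new cones have edges that are not edges of $\c$, so ``not orthogonal to any codimension-$\leq k$ face of $\c$'' does not directly control $r(\gamma)$ for the pieces. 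You correctly note that the decomposition must be compatible with the family $\CL_k$, but you do not say how to arrange this; this is exactly the nontrivial combinatorial input of \cite{SLII2014}, and a complete proof must supply it (for instance by triangulating $\c$ without new edges, or by arguing at the level of $M^L$ via a recursion on faces). As a blind reconstruction your proposal identifies the right mechanism and the right two obstructions, but both remain open in what you wrote.

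Minor point: in the reduction from \eqref{eq:better-than-Barvinok-cone} to \eqref{eq:approx-M}, note that multiplying by $\e^{-\langle\xi,s\rangle}$ preserves the filtration $\CM_{[\geq q]}(V^*)$ because it is an invertible holomorphic function with nonzero constant term, so the two statements are indeed equivalent; this part of your argument is fine.
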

We show in the next section that these approximations of  generating functions of cones
lead to  computations of the highest polynomial degree terms of  weighted Ehrhart quasi-polynomials of parametric polytopes.

\subsection{Highest polynomial degree terms}
Recall the notations of Theorem \ref{th:ehrhart-chamber}.
Our key technical result is the following.

\begin{theorem}\label{prop:computation-Ehrhart-terms}
 Let $\p(b)\subset V$ be a parametric
polytope.
Fix $k$, $0\leq k \leq d$. Let $h(x)=\frac{\langle \ell,x\rangle^m}{m!}$. Let $\tau$ be an admissible chamber for $\p(b)$.

For each $B\in \CB_\tau$, let $\CL_{k,B}$ be a family of subspaces which contains the faces of $\c_B$ of codimension $\leq k$
and is closed under sum.
Then for $r\geq d+m-k$, the terms of polynomial degree $r$ of the Ehrhart quasi-polynomial $E(\alpha,h,\tau)(b)$ are given by
\begin{equation}\label{eq:computation-Ehrhart-terms}
E_{[r]}(\alpha,h,\tau)(b)=\left(\sum_{B\in \CB_\tau} \retroS^{\CL_{k,B}}(s_B(b),\c_B)_{[m-r]}(\xi) \frac{\langle\xi,s_B(b)\rangle^r}{r!}
\right)\bigg|_{\xi=\ell}.
\end{equation}
\end{theorem}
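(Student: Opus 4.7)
The plan is to combine, in a direct way, two earlier results: the explicit vertex-wise formula of Theorem~\ref{th:ehrhart-chamber}(iii), specialized to the discrete case $L=\{0\}$, and the cone approximation Theorem~\ref{th:better-than-Barvinok} applied separately at each vertex.

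First I would specialize Theorem~\ref{th:ehrhart-chamber}(iii) to $L=\{0\}$, so that $\retroS^L$ becomes the function $M=M^{\{0\}}$ introduced in section~\ref{subsection:polyhedra}. This produces the identity
$$E_{[r]}(\mu,h,\tau)(b) = \left(\sum_{B\in \CB_\tau} M(s_B(b),\c_B)_{[m-r]}(\xi)\, \frac{\langle\xi,s_B(b)\rangle^r}{r!}\right)\bigg|_{\xi=\ell}$$
valid for every $r$, and the task is thereby reduced to showing that, under the hypothesis $r\geq d+m-k$, each summand $M(s_B(b),\c_B)_{[m-r]}(\xi)$ may be replaced by $M^{\CL_{k,B}}(s_B(b),\c_B)_{[m-r]}(\xi)$ without altering the total.

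Next I would apply Theorem~\ref{th:better-than-Barvinok} cone by cone: the hypotheses imposed on $\CL_{k,B}$ in the present statement (closure under sum, containment of $\lin(\f)$ for every face $\f$ of codimension $\leq k$ of $\c_B$) are precisely those required there, so
$$M(s,\c_B)(\xi) - M^{\CL_{k,B}}(s,\c_B)(\xi) \in \CM_{[\geq -d+k+1]}(V^*).$$
Extracting the homogeneous component of degree $q$ yields the equality of rational functions $M(s,\c_B)_{[q]}(\xi) = M^{\CL_{k,B}}(s,\c_B)_{[q]}(\xi)$ for every $q\leq -d+k$.

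Finally I would take $q = m-r$: the assumption $r\geq d+m-k$ is exactly equivalent to $m-r\leq -d+k$, so the replacement is legitimate summand by summand in the first display, which gives~\eqref{eq:computation-Ehrhart-terms}. I do not anticipate any serious obstacle; the one bookkeeping point worth stating explicitly is that the individual patched summands $M^{\CL_{k,B}}(s_B(b),\c_B)_{[m-r]}(\xi)$ may well be singular at $\xi=\ell$, but since the substitution is a term-by-term equality of rational functions in $\xi$, the full patched sum is identical as a rational function to the original one, which is already a polynomial in $\xi$ by Theorem~\ref{th:ehrhart-chamber}. Hence evaluation at $\xi=\ell$ is well defined and yields the same value $E_{[r]}(\mu,h,\tau)(b)$.
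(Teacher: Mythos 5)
Your proposal is correct and follows essentially the same route as the paper: specialize Equation~\eqref{eq:top-parametric-Ehrhart} (from Theorem~\ref{th:ehrhart-chamber}(iii)) to $L=\{0\}$, then invoke Theorem~\ref{th:better-than-Barvinok} at each vertex to replace $M(s_B(b),\c_B)_{[m-r]}$ by $M^{\CL_{k,B}}(s_B(b),\c_B)_{[m-r]}$ whenever $m-r\leq -d+k$. Your bookkeeping remark about the inequality (the paper's in-line text actually reverses the inequality, a typo) and about evaluation at $\xi=\ell$ being justified by the polynomiality of the total sum is a welcome clarification of points the paper leaves implicit.
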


\begin{proof}
By Formula \eqref{eq:top-parametric-Ehrhart}, the terms of polynomial degree~$r$ of the
Ehrhart quasi-polynomial $E(\alpha,h,\tau)(b)$ are given by 
  $$
   E_{[r]}(\alpha,h,\tau)(b)=
 \left(\sum_{B\in \CB_\tau}
 \retroS(s_B(b),\c_B)_{[m-r]}(\xi) \frac{\langle\xi,s_B(b)\rangle^r}{r!}\right)\bigg|_{\xi=\ell}.
  $$
  By Theorem \ref{th:better-than-Barvinok}, for $r\geq d+m-k$, i.e., $m-r\geq -d+k$,
  we have the equality of the homogeneous components of $\xi$-degree $m-r$,
  $\retroS(s_B(b),\c_B)_{[m-r]}(\xi)= \retroS^{\CL_{k,B}}(s_B(b),\c_B)_{[m-r]}(\xi)$.
\end{proof}

We have an analogous result for the dilation of a single  polytope.
\begin{proposition}\label{prop:computation-dilated-Ehrhart-terms}
Let $\p\subset V$ be a (semi-)rational polytope.  Fix $k$, $0\leq k \leq d$. Let $h(x)=\frac{\langle \ell,x\rangle^m}{m!}$. For each vertex $s$ of $\p$, let $\c_s$ be the cone of feasible directions of $\p$ at $s$.
For each $s$, let $\CL_{k,s}$ be a family of subspaces which contains the faces of $\c_s$ of codimension $\leq k$
and is closed under sum.
Then for $r\geq d+m-k$, the term of polynomial degree $r$ of the Ehrhart (semi-)quasi-polynomial $E(\p,h)(t)$ is given by
\begin{equation}\label{eq:computation-dilated-Ehrhart-terms}
E_{[r]}(\p,h)(t)=\left(\sum_{s}  \retroS^{\CL_{k,s}}(t s,\c_s)_{[m-r]}(\xi) \frac{\langle\xi,s\rangle^r}{r!}\right)\bigg|_{\xi=\ell} \;t^r.
\end{equation}
\end{proposition}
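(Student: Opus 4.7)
The plan is to adapt the proof of Theorem~\ref{prop:computation-Ehrhart-terms} to the single-polytope setting, replacing the parametric Brion decomposition by Brion's formula applied directly to the dilated polytope~$t\p$. The essential observation is that for $t > 0$ the vertices of $t\p$ are exactly $\{ts : s\in\CV(\p)\}$, and the cone of feasible directions of $t\p$ at $ts$ is precisely $\c_s$, independent of~$t$. Thus the combinatorial data needed for the decomposition is constant in~$t$, which mirrors the fact that $\c_B$ did not depend on $b\in\tau$ in the parametric setting.

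As in the proof of Theorem~\ref{th:ehrhart-chamber}, I would first reduce to the case $h(x)=\frac{\langle\ell,x\rangle^m}{m!}$ by linearity. Applying Brion's formula (the analogue of \eqref{th:brion}) to $t\p$ gives
\[
   S(t\p)(\xi)=\sum_{s\in\CV(\p)} S(ts+\c_s)(\xi).
\]
Then I would use the fundamental identity \eqref{eq:MversusS} with $s$ replaced by $ts$ to expand each term as
\[
   S(ts+\c_s)_{[m]}(\xi)=\sum_{r=0}^{m+d} M(ts,\c_s)_{[m-r]}(\xi)\,\frac{\langle\xi,ts\rangle^r}{r!}
   =\sum_{r=0}^{m+d} M(ts,\c_s)_{[m-r]}(\xi)\,\frac{\langle\xi,s\rangle^r}{r!}\,t^r.
\]
Summing over vertices and evaluating at $\xi=\ell$ yields the Ehrhart (semi-)quasi-polynomial $E(\p,h)(t)$, and the coefficient of $t^r$ gives the analogue of \eqref{eq:top-parametric-Ehrhart}:
\[
   E_{[r]}(\p,h)(t)=\biggl(\sum_{s\in\CV(\p)} M(ts,\c_s)_{[m-r]}(\xi)\,\frac{\langle\xi,s\rangle^r}{r!}\biggr)\bigg|_{\xi=\ell}\;t^r.
\]

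To finish, I would invoke Theorem~\ref{th:better-than-Barvinok} cone by cone: the hypothesis on $\CL_{k,s}$ is exactly what is needed to conclude that
\[
   M(ts,\c_s)-M^{\CL_{k,s}}(ts,\c_s)\in\CM_{[\geq -d+k+1]}(V^*).
\]
For $r\geq d+m-k$, that is $m-r\leq -d+k$, the homogeneous components of $\xi$-degree $m-r$ therefore agree, so we may replace $M(ts,\c_s)_{[m-r]}$ by $M^{\CL_{k,s}}(ts,\c_s)_{[m-r]}$ in the displayed formula above, yielding \eqref{eq:computation-dilated-Ehrhart-terms}.

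The main subtlety, exactly as in Theorem~\ref{th:ehrhart-chamber}, is that an individual summand $M^{\CL_{k,s}}(ts,\c_s)_{[m-r]}(\xi)\,\frac{\langle\xi,s\rangle^r}{r!}$ is only a rational function of~$\xi$ and may be singular at $\xi=\ell$; only the sum over all vertices of $\p$ is regular. This regularity is inherited from the Brion sum on the left-hand side, which is the genuine integral of a polynomial over a polytope and therefore polynomial in~$\xi$; the approximation step does not introduce poles in the relevant degree range because the difference lies in degrees $\geq -d+k+1 > m-r$. Hence the right-hand side of \eqref{eq:computation-dilated-Ehrhart-terms} is well defined as a value at $\xi=\ell$, and the whole argument carries over verbatim from the parametric case.
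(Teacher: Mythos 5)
Your proof is correct and takes essentially the same route the paper implicitly intends: the paper states this proposition as an analogue of Theorem~\ref{prop:computation-Ehrhart-terms} without a separate proof, and your argument faithfully adapts that proof (and the mechanism of Theorem~\ref{th:ehrhart-chamber}) by applying Brion's formula directly to $t\p$, using \eqref{eq:MversusS} with $s\mapsto ts$, and invoking Theorem~\ref{th:better-than-Barvinok} cone by cone to replace $\retroS(ts,\c_s)_{[m-r]}$ by $\retroS^{\CL_{k,s}}(ts,\c_s)_{[m-r]}$ when $m-r\leq -d+k$. One small slip in phrasing: $S(t\p)_{[m]}(\xi)$ is a finite \emph{sum} of $\frac{\langle\xi,x\rangle^m}{m!}$ over lattice points of $t\p$, not an integral, but the conclusion you draw from it (polynomiality in $\xi$) is the same and correct; also note that for $r\geq d+m-k$ the equality $\retroS^{\CL_{k,s}}(ts,\c_s)_{[m-r]}=\retroS(ts,\c_s)_{[m-r]}$ holds term by term, so the cone-by-cone replacement trivially preserves the regularity of the sum.
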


In the case of a single polytope $\p$, there are two canonical families $\CL_{k,s}$ associated with a vertex $s$ of $\p$.
The first one, $\CL_{k,s}^{\ConeByCone}$,
is the smallest family which contains the subspaces parallel to the faces of~$\p$ through~$s$,
and which is closed under sum (this condition is automatic if $\p$ is simple).
 This family depends only on the cone $\c_s$.
 The second family, $\CL_k^{\Barvi}$,  is the smallest family which contains the subspaces parallel to all the faces of $\p$ and which is closed under sum. The second family is the one which was originally used by Barvinok in \cite{barvinok-2006-ehrhart-quasipolynomial}. It depends on the polytope $\p$ and is the same family at each vertex $s$ of $\p$.
We will return to these two choices in section~\ref{sect:three}
 and associate to each of these choices a canonical quasi-polynomial function of the multi-parameter $b$
 obtaining our three quasi-polynomials, as promised  by the title of our article.

We first give more details on patching functions in two simple cases.

\subsection{The patching function of a simplicial cone}

Let $\c$ be a simplicial cone with edge generators $v_1,\dots, v_d$.
Let us denote here by $\CL_k(\c)$  the family of subspaces $\lin(\f)$ for faces
of codimension $\leq  k$. This family is  closed under sum.
We computed the patching function of $\CL_k(\c)$ in  \cite{so-called-paper-1}. Let us recall the result.
\begin{definition}
 Let $\CJ^d_{\geq (d-k)}$ be the set  of subsets  $I\subseteq
\{1,\dots,d\}$ with $|I|\geq d-k$.
\end{definition}

Recall that we denote  by $L_I$ the  linear space spanned by  the vectors $v_i$, $i\in I$.
Let $d=\dim V$. Then $I\mapsto L_I$ is an isomorphism of posets between $\CJ^d_{\geq (d-k)}$ and $\CL_k(\c)$.
 We denote by $\rho_{d,k}$ the patching function on $\CL_k(\c)$.\footnote{In
   \cite{so-called-paper-1}, it appears under the name
   $\lambda_{\Moebius}(I)$.}
\begin{proposition}[{\cite[Proposition 29]{so-called-paper-1}}]\label{prop:cone-patch}
\begin{equation}\label{eq:cone-patch}
\rho_{d,k}(L_I)= (-1)^{|I|-d+k}\binomial(|I|-1,d-k-1)
\end{equation}
where $\binomial(a,b)=\frac{a!}{b! (a-b)!}$ is the binomial coefficient.
\end{proposition}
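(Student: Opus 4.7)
The plan is to deduce the formula from Lemma~\ref{lemma:patching-Mobius}, which identifies $\rho_{d,k}(L_I)=-\mu_{\hat\CL_k(\c)}(\hat 0,L_I)$ where $\mu_{\hat\CL_k(\c)}$ is the Möbius function of the poset $\CL_k(\c)$ with a new minimum~$\hat 0$ adjoined. Since $v_1,\dots,v_d$ are linearly independent, the map $I\mapsto L_I$ is a poset isomorphism between $\CJ^d_{\geq d-k}$ (ordered by inclusion) and $\CL_k(\c)$, so it suffices to compute Möbius values in the former.

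I would then observe that for $I\in\CJ^d_{\geq d-k}$, the interval $[\hat 0,L_I]$ in $\hat\CL_k(\c)$ consists of $\hat 0$ together with all $L_J$ for $J\subseteq I$ with $|J|\geq d-k$; in particular its isomorphism type depends only on $n:=|I|$. Set $r:=d-k$ and write $f(n):=\mu_{\hat\CL_k(\c)}(\hat 0,L_I)$ for any $I$ of size $n$. The defining relation $\sum_{\hat 0\le x\le L_I}\mu(\hat 0,x)=0$ becomes the triangular recursion
\begin{equation*}
1+\sum_{j=r}^{n}\binom{n}{j}f(j)=0,\qquad n\geq r,
\end{equation*}
with base case $f(r)=-1$ (since then the interval is just $\{\hat 0,L_I\}$).

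The remaining step is to verify that
\begin{equation*}
f(n)=(-1)^{n-r+1}\binom{n-1}{r-1}
\end{equation*}
satisfies this recursion, which (together with $\rho=-\mu(\hat 0,\cdot)$) yields the stated formula. This reduces to the combinatorial identity
\begin{equation*}
\sum_{j=r}^{n}(-1)^{j}\binom{n}{j}\binom{j-1}{r-1}=(-1)^{r},
\end{equation*}
which I would prove by Pascal's relation $\binom{j-1}{r-1}=\binom{j}{r}-\binom{j-1}{r}$ and a telescoping argument, or equivalently by using $\binom{n}{j}\binom{j}{r}=\binom{n}{r}\binom{n-r}{j-r}$ together with the vanishing of the alternating sum of binomial coefficients $\sum_i(-1)^i\binom{n-r}{i}=0$.

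The only subtle point is verifying the closed form; since it is a single binomial identity proved by telescoping, I do not expect any real obstacle. Everything else is a direct translation between the patching coefficients of a family of coordinate subspaces of a simplicial cone and the Möbius function of a truncated Boolean lattice.
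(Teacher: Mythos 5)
Your argument is correct. Note first that the paper itself does \emph{not} prove this proposition; it is stated with a citation to Proposition~29 of \cite{so-called-paper-1}, so there is no in-text proof to compare against. Your reduction via Lemma~\ref{lemma:patching-Mobius} to the M\"obius function of the truncated Boolean lattice $\CJ^d_{\geq d-k}$ (with a $\hat 0$ adjoined), using the linear independence of $v_1,\dots,v_d$ to identify $L_J\subseteq L_I$ with $J\subseteq I$ and then to conclude $\mu(\hat0,L_I)$ depends only on $|I|$, is exactly the expected route and is valid. The triangular recursion and base case $f(r)=-1$ are right, and the closed form $f(n)=(-1)^{n-r+1}\binom{n-1}{r-1}$ (with $r=d-k$) indeed gives $\rho_{d,k}(L_I)=-f(|I|)=(-1)^{|I|-d+k}\binom{|I|-1}{d-k-1}$.

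One small imprecision: the two methods you offer for the binomial identity $\sum_{j=r}^{n}(-1)^{j}\binom{n}{j}\binom{j-1}{r-1}=(-1)^{r}$ are not literally ``equivalent,'' and neither telescopes on its own. The cleanest version is to combine them: Pascal gives $\binom{j-1}{r-1}=\binom{j}{r}-\binom{j-1}{r}$; the factorization $\binom{n}{j}\binom{j}{r}=\binom{n}{r}\binom{n-r}{j-r}$ plus $\sum_i(-1)^i\binom{n-r}{i}=0$ (for $n>r$) kills the first piece; and the second piece is the same sum with $r$ replaced by $r+1$, so induction on $n-r$ (base case $n=r$ being immediate) finishes it. Alternatively, extending the sum to $j=0$ (using $\binom{-1}{r-1}=(-1)^{r-1}$) and extracting $[z^{r-1}]\,(-z)^n/(1+z)=0$ for $n\geq r$ gives the identity in one line. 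Either way, the identity is true and your proof is sound.
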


\subsection{The  patching function of  a simplex}
\label{s:Lk-Barvinok}

Let $\p$ be a $d $-dimensional simplex with vertices $s_1,\ldots,
s_{d+1}$. We fix $0\leq k \leq d$. The faces of $\p$ of dimension $\geq d-k$ are labeled by
subsets $I$ of $\{1,\dots,d+1\}$ of cardinality $|I|\geq d-k+1$, the face $\f_I$ being the affine
span of the vertices $s_i$, for $i\in I$.
Let us denote the
corresponding linear subspace $\lin(\f_I)$ by $L_I$.
Thus $L_I$ is the linear span of vectors $s_i-s_j$ with $i,j\in I$.
We consider only the case $k\leq d-1$.
The family $L_I$ is not closed under sum, in general. If $I_1\cap I_2 \neq \emptyset$,  we clearly have
$$
L_{I_1}+ L_{I_2} = L_{I_1\cup I_2}.
$$
On the contrary, if  $I_1\cap I_2= \emptyset$, then the sum $L_{I_1} \oplus
 L_{I_2}$ is direct and is not of the form $\lin(\f)$. Therefore,
 to describe the family $\CL_k^{\Barvi}$ for the simplex, we
 need to
 consider ``subpartitions'' $I=\{I_1,\ldots, I_m\}$ of $\{1,\dots,d+1\}$,
 meaning that   $I_1,\ldots, I_m$ are pairwise disjoint subsets of
$\{1,\dots,d+1\}$.
 The
 corresponding subspace is
\begin{eqnarray}\label{eq:labels}
    L_I =  L_{I_1} \oplus \cdots \oplus L_{I_m}\subseteq V.
\end{eqnarray}
\begin{definition}
  We denote by $\BjoernerLovasz(N,n)$ the poset of all the subpartitions
 $I=\{I_1,\ldots, I_m\}$  of $\{1,\dots,N\}$, with $ m\geq 1$ and  $|I_j|\geq
 n$, ordered by refinement and set inclusion.\footnote{The poset can also be
   identified with the subposet of those partitions of 
   $\{1,\dots,N\}$ that have no block sizes in $2, 3, \dots, n-1$.  (The
   singleton blocks represent the elements of $\{1,\dots,N\}$ not in any of
   the $I_j$.) In this form, the poset appears under the notation $\Pi_{N,n}$ in a paper by A.~Bj\"orner
   and L.~Lov\'asz \cite{MR1243770}; we adopt the same notation.} 
 (Its least element is the empty subpartition, $\hat0 = \emptyset$.)
\end{definition}

Let  $N=d+1$, $n=d-k+1$. Then the
 map $I\mapsto L(I)$  is a poset isomorphism
of $\BjoernerLovasz(d+1,d-k+1)$ with  the  poset
 $\CL_k^{\Barvi}$  associated with the faces of codimension $\leq k$ of the $d$-dimensional simplex $\p$.

 We denote by $\sigma_{d,k}(I)$ the patching function on $\CL_k^{\Barvi}$.
 By Lemma \ref{lemma:patching-Mobius}, it is the opposite of  the M\"obius function $\mu(\hat{0},I)$ of the
 poset $\BjoernerLovasz(d+1,d-k+1)$.
\begin{example}
The  patching function on $\CL_2^{\Barvi}\simeq \BjoernerLovasz(4,2)$,
associated with the $3$-dimensional simplex (tetrahedron) and its faces of codimension $\leq 2$.
The six  edges correspond to  $ \{\{1,2\}\},\{\{1,3\}\},\ldots $, the
four facets to $ \{\{1,2,3\}\},\{\{1,2,4\}\},\ldots $, the simplex itself to
$\{\{1,2,3,4\}\}$, and the three planes spanned by the directions of two
opposite edges to
$\{\{1,2\},\{3,4\}\}$, $\{\{1,3\},\{2,4\}\}$, $\{\{2,3\},\{1,4\}\}$.
The  M\"obius function of $\BjoernerLovasz(4,2)$ is easy to compute directly. We obtain
\begin{align*}
\sigma_{3,2}(\{\{i,j\}\}) &=  1  \\
\sigma_{3,2}(\{\{i,j,k\}\}) &=  -2 \\
\sigma_{3,2}\{\{1,2,3,4\}\}) &=  6\\
\sigma_{3,2}(\{\{i,j\},\{k,l\}\}) &= -1
\end{align*}
\end{example}
It turns out that the M\"obius function $\mu(\hat{0},I)$  of the poset $\BjoernerLovasz(N,n)$
has been computed by A.~Bj\"orner and L.~Lov\'asz in a different context
\cite[section 4]{MR1243770}.
Their result is the following.
\begin{proposition}\label{prop:mobius}
Denote by $\mu_{N,n}(I)$ the M\"obius function $\mu(\hat{0},I)$ of the poset $\BjoernerLovasz(N,n)$. Then,

\begin{enumerate}[\rm(i)]
\item $ \mu_{N,n}(I_1,\ldots, I_r)$ depends only on the
  block-sizes $n_1=|I_1|,\ldots, n_r=|I_r| $.

  Let us write $\mu_{N,n}(n_1,\ldots, n_r)$ for $ \mu_{N,n}(I_1,\ldots, I_r)$.

\item $\mu_{N,m}(n_1,\ldots, n_r)= \mu_{N,m}(n_1) \dots
  \mu_{N,m}(n_r)$.

\item $\mu_{N,n}(m)= \mu_{m,n}(m)$, for $n\leq m\leq N$.

\item Let $\mu_N(n)= \mu_{N,n}(N)$ if $N\geq n$, $\mu_1(n)=1$ for
  every $n\geq 1$ and $\mu_N(n)=0$ for $2\leq N\leq n-1$.  Consider the
  generating series
$$
F_n(z)=\sum_{N=1}^\infty \mu_N(n)\frac{z^N}{N!}.
$$
Then
$$
\e^{ F_n(z)}= \sum_{N=0}^{n-1}\frac{z^N}{N!}.
$$
\end{enumerate}
\end{proposition}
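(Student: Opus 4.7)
The plan is to exploit the direct-product structure of intervals in $\Pi_{N,n}$ together with the exponential formula for EGFs, applied to the defining identity of the M\"obius function.

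For (i), the symmetric group $S_N$ acts on $\Pi_{N,n}$ by poset automorphisms fixing $\hat 0$, and acts transitively on subpartitions with a prescribed multiset of block sizes; since M\"obius values depend only on the interval up to isomorphism, this gives (i). For (ii), if $I=\{I_1,\ldots,I_r\}$, then any $J\leq I$ decomposes uniquely as a tuple $(J^{(1)},\ldots,J^{(r)})$, where $J^{(k)}$ collects the blocks of $J$ contained in $I_k$ (each block of $J$ lies in a unique $I_k$ since the $I_k$ are disjoint), and each $J^{(k)}$ is a subpartition of $I_k$ with blocks of size $\geq n$. This yields a poset isomorphism
\begin{equation*}
[\hat 0, I] \;\cong\; \prod_{k=1}^r [\hat 0,\{I_k\}],
\end{equation*}
and the standard product formula for M\"obius functions of a direct product of posets gives the factorization. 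Part (iii) then follows from the observation that the interval $[\hat 0,\{I\}]$ in $\Pi_{N,n}$, where $|I|=m$, is canonically isomorphic (by restriction to $I$) to the whole poset $\Pi_{m,n}$.

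The heart of the argument is (iv). For $N\geq n$, the subpartition $\{\{1,\ldots,N\}\}$ lies in $\Pi_{N,n}$ and is its unique maximum, because every subpartition of $\{1,\ldots,N\}$ refines it. Hence the defining relation of the M\"obius function yields
\begin{equation*}
a_N \;:=\; \sum_{J\in \Pi_{N,n}} \mu(\hat 0, J) \;=\; 0 \quad\text{for } N\geq n,
\end{equation*}
while trivially $a_N=1$ for $0\leq N\leq n-1$ since $\Pi_{N,n}=\{\hat 0\}$ in that range. Thus $\sum_N a_N z^N/N! = \sum_{N=0}^{n-1} z^N/N!$. On the other hand, identifying $\Pi_{N,n}$ with the set of partitions of $\{1,\ldots,N\}$ whose block sizes lie in $\{1\}\cup\{n,n+1,\ldots\}$ (the alternative description of $\Pi_{N,n}$ recalled in the paper's footnote, with singleton blocks representing uncovered elements), the factorization from (ii)--(iii) shows that $\mu(\hat 0,J)$ is the product over blocks $B$ of the weight $w(|B|)$, where $w(1)=1$, $w(m)=0$ for $2\leq m\leq n-1$, and $w(m)=\mu_m(n)$ for $m\geq n$. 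These are precisely the coefficients of $F_n$. The classical exponential formula for EGFs of set partitions then gives
\begin{equation*}
\sum_N a_N \frac{z^N}{N!} \;=\; \exp\Bigl(\sum_{m\geq 1} w(m)\frac{z^m}{m!}\Bigr) \;=\; \exp(F_n(z)),
\end{equation*}
and comparison with the first evaluation yields the claimed identity $\exp(F_n(z))=\sum_{N=0}^{n-1} z^N/N!$.

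The main subtlety in (iv) is reconciling the two equivalent descriptions of $\Pi_{N,n}$, so that the ``singleton blocks'' (with $w(1)=1$) in the set-partition picture correctly account for elements left uncovered in the subpartition picture, making the multiplicative structure from (ii) match the exponential formula.
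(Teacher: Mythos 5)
The paper does not actually prove Proposition~\ref{prop:mobius}; it simply states it and cites Bj\"orner and Lov\'asz \cite{MR1243770} for the computation. So there is no "paper proof" to compare against — you have supplied an argument where the paper relies on an external reference.

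Your proof is correct and complete. Parts (i)--(iii) are handled by the standard toolbox: the $S_N$-action gives the symmetry in (i); the interval below $I=\{I_1,\ldots,I_r\}$ decomposes as a direct product of the intervals below each $\{I_k\}$ (because the $I_k$ are disjoint, each block of a refinement lives inside a unique $I_k$), and the product formula for M\"obius functions gives (ii); and restriction of the subpartition to $I_k$ identifies $[\hat 0,\{I_k\}]$ with $\Pi_{|I_k|,n}$, giving (iii). Part (iv) is the substantive step: you correctly use the defining relation of the M\"obius function applied to the unique maximum $\{\{1,\dots,N\}\}$ (which exists precisely when $N\geq n$) to get $a_N=0$ for $N\geq n$ and $a_N=1$ for $0\le N\le n-1$, and then you use the exponential formula. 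The only place one must be careful is in reconciling the two descriptions of $\Pi_{N,n}$ — the subpartition picture (which is what (ii) factors over) versus the set-partition picture with singleton blocks (which is what the exponential formula wants) — and you handle this cleanly by giving the singletons weight $w(1)=\mu_1(n)=1$, so that $\mu(\hat 0,J)=\prod_B w(|B|)$ over all blocks of the associated partition, the vanishing $w(m)=0$ for $2\le m\le n-1$ extends the sum to all partitions, and the EGF of weights is exactly $F_n(z)$. Comparing the two expressions for $\sum_N a_N z^N/N!$ yields the identity. This is almost certainly the same style of argument as in \cite{MR1243770}, but it is a genuine self-contained proof of a statement the paper leaves to the literature.
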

Applying Lemma \ref{lemma:patching-Mobius},  we deduce  the following computation rules for
  the patching function $\sigma_{d,k}(I)$.
\begin{proposition}\label{prop:patching-simplex}~
  \begin{enumerate}[\rm(i)]
  \item $ \sigma_{d,k}(I_1,\ldots, I_r)$ depends only on the block-sizes
    $n_1=|I_1|,\ldots, n_r=|I_r| $.

    Let us write $\sigma_{d,k}(n_1,\ldots, n_r)$ for $
    \sigma_{d,k}(I_1,\ldots, I_r)$.

  \item $\sigma_{d,k}(n_1,\ldots, n_r)= (-1)^{r-1} \sigma_{d,k}(n_1)
    \dots \sigma_{d,k}(n_r)$.

  \item  For $d-k\leq m\leq d+1$, $\frac{\sigma_{d,k}(m)}{m!}$ is
    the coefficient of degree $m$ of the power series
$$
-\ln \sum_{p=0}^{d-k-1}\frac{z^p}{p!}.
$$
\end{enumerate}
\end{proposition}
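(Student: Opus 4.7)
The plan is to deduce everything from the combinatorial dictionary provided by Lemma~\ref{lemma:patching-Mobius} together with the Björner--Lovász Proposition~\ref{prop:mobius}. The content of Proposition~\ref{prop:patching-simplex} is a translation: no new combinatorics needs to be developed.

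First I would record the basic identification. By the poset isomorphism $I \mapsto L_I$ between $\BjoernerLovasz(d+1,d-k+1)$ and $\CL_k^{\Barvi}$ explained just before the statement, combined with Lemma~\ref{lemma:patching-Mobius}, we have the fundamental relation
\begin{equation*}
\sigma_{d,k}(I) \;=\; -\mu_{N,n}(I), \qquad N=d+1,\ n=d-k+1,
\end{equation*}
valid for every subpartition $I \in \BjoernerLovasz(N,n)$. From here, parts (i)--(iii) are pure book-keeping on the Björner--Lovász formulas.

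For (i), the fact that $\mu_{N,n}(I_1,\dots,I_r)$ depends only on the block sizes $(n_1,\dots,n_r)$ (Proposition~\ref{prop:mobius}(i)) immediately transfers to $\sigma_{d,k}$ via the sign flip. For (ii), apply Proposition~\ref{prop:mobius}(ii):
\begin{equation*}
\sigma_{d,k}(n_1,\dots,n_r)
= -\mu_{N,n}(n_1,\dots,n_r)
= -\prod_{j=1}^r \mu_{N,n}(n_j)
= -\prod_{j=1}^r (-\sigma_{d,k}(n_j))
= (-1)^{r-1}\prod_{j=1}^r \sigma_{d,k}(n_j),
\end{equation*}
which is exactly the product rule we want.

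For (iii), I would combine parts (iii) and (iv) of Proposition~\ref{prop:mobius}. First, using part (iii), $\mu_{N,n}(m) = \mu_{m,n}(m) = \mu_m(n)$ whenever $n \le m \le N$, so $\sigma_{d,k}(m) = -\mu_m(n)$ in that range (which is precisely $d-k \le m \le d+1$ once $n=d-k+1$). Then from part (iv), the generating series $F_n(z) = \sum_{m\geq 1} \mu_m(n)\frac{z^m}{m!}$ satisfies $\e^{F_n(z)} = \sum_{p=0}^{n-1}\frac{z^p}{p!}$, so $F_n(z) = \ln \sum_{p=0}^{n-1}\frac{z^p}{p!}$. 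Therefore
\begin{equation*}
\sum_{m\geq 1}\frac{\sigma_{d,k}(m)}{m!}\,z^m \;=\; -F_n(z) \;=\; -\ln\sum_{p=0}^{n-1}\frac{z^p}{p!},
\end{equation*}
which is the claimed generating-series identity (with $n-1 = d-k$). Extracting the coefficient of $z^m$ gives the statement for every $m$ in the asserted range.

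The only thing requiring a bit of care is the single sign flip from Lemma~\ref{lemma:patching-Mobius}; no genuine obstacle arises, since the hard combinatorial input — the explicit Möbius function of $\BjoernerLovasz(N,n)$ — has already been imported from Björner--Lovász as Proposition~\ref{prop:mobius}.
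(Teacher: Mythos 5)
Your proof is correct and takes exactly the paper's approach: the paper itself introduces the proposition with the one-line justification ``Applying Lemma~\ref{lemma:patching-Mobius}, we deduce the following computation rules,'' and that is precisely what you carry out, applying the sign flip $\sigma_{d,k}=-\mu_{N,n}$ to each part of Proposition~\ref{prop:mobius} (with $N=d+1$, $n=d-k+1$). Your sign bookkeeping in part~(ii) is right: $-\prod_j(-\sigma_{d,k}(n_j))=(-1)^{r-1}\prod_j\sigma_{d,k}(n_j)$.

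One thing you should flag rather than gloss over: your own derivation for part~(iii) yields, correctly,
\[
\sum_{m\geq 1}\frac{\sigma_{d,k}(m)}{m!}\,z^m=-F_n(z)=-\ln\sum_{p=0}^{n-1}\frac{z^p}{p!}=-\ln\sum_{p=0}^{d-k}\frac{z^p}{p!},
\]
since $n-1=d-k$, whereas the proposition as printed has the upper limit $d-k-1$. These are not the same, and you assert agreement without noticing the mismatch. The printed bound $d-k-1$ is in fact an off-by-one typo in the paper: one checks it against the paper's own worked example $\BjoernerLovasz(4,2)$ (so $d=3$, $k=2$), where the printed formula would give $-\ln 1=0$ and hence $\sigma_{3,2}(m)=0$ for all $m$, contradicting $\sigma_{3,2}(2)=1$, $\sigma_{3,2}(3)=-2$, $\sigma_{3,2}(4)=6$; whereas $-\ln(1+z)$ (the $d-k$ version) reproduces these values exactly. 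So your formula is the right one; just make the correction explicit rather than silently claiming it matches the statement.
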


\section{The cone-by-cone patched generating function}\label{sect:intermediate-todd}

Let $\p\subset V$ be a  polytope. Fix $k$, $0\leq k\leq d$, where $d$ again is
the dimension of~$\p$. 

If ${\CL_k}$ is a family of subspaces which is closed under sum,  then it follows from Brion's theorem that
$$
S^{\CL_k}(\p)(\xi)= \sum_{s\in \CV(\p)} S^{{\CL_k}}(s+\c_s )(\xi).
$$
In particular, the sum $\sum_{s\in \CV(\p)} S^{{\CL_k}}(s+\c_s )(\xi)$ is analytic, although each $s$-term is singular at $\xi=0$. The singularities cancel out when we sum over the vertices.

In contrast, if  we take a different family   $\CL_{k, s}$  for each vertex $s$, the
sum over vertices $\sum_{s\in \CV(\p)} S^{\CL_{k, s}}(s+\c_s )(\xi)$ need not be
analytic. 

However, when $\p$ is a simple polytope (i.e., its cones at vertices $\c_s$ are
simplicial), and $\CL_{k, s}$ is chosen as the  family of subspaces parallel to the faces
of $\c_s$ which are of codimension $\leq k$, we will show that this sum
is actually holomorphic.
We will deduce this fact from the computation in \cite{SLII2014} of   the residues of the generating function of a shifted cone.

Let us give a name and a notation for this sum.
\begin{definition}\label{def:cone-by-cone-generating-function} Let $\p$ be a simple polytope.
The \emph{cone-by-cone patched generating function} of $\p$ is
$$
 A^{k}(\p)(\xi)= \sum_{s\in \CV(\p)} S^{\CL_{k,s}}(s+\c_s )(\xi)
 $$
 where, for each vertex $s$ of $\p$,  $\CL_{k,s}$ is  the  family of subspaces
 parallel to the faces of $\c_s$ which are of codimension $\leq
 k$.\footnote{This function has appeared in \cite{so-called-paper-1}, using
   the notation $A_{\geq
     d-k}(\p)(\xi)$, where $d$ is the dimension of~$\p$.} 
\end{definition}
\tgreen{Changed it from $A_k$ to $A^{k}$ to match $S^{k,....}$ and
  $E^{k,....}$ notation.  (The notation is different from the one in
  \cite{so-called-paper-1} anyway.)  The only place where $k$ remains a subscript is in
  $\CL_k$ -- which I don't want to change because $S^{\CL^k}$ would look bad.}

\begin{proposition}\label{prop:cone-by-cone-is-analytic}
Let $\p\subset V$ be a  simple polytope.
Then  $A^{k}(\p)(\xi)$  is analytic near $\xi=0$.
\end{proposition}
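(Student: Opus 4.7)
The plan is to swap the order of summation in the definition of $A^{k}(\p)$ and then reduce the analyticity of the resulting weighted sum to a residue cancellation, invoking the residue formulas of \cite{SLII2014}.

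Since $\p$ is simple, each cone $\c_s$ is simplicial, and $\CL_{k,s}$ is exactly the set of subspaces $\lin(\f)$ for faces $\f$ of $\p$ through $s$ with $\codim \f \leq k$. By Proposition~\ref{prop:cone-patch}, the patching coefficient $\rho(\lin(\f))$ depends only on $j = \codim \f$; denote it by~$\rho_j$. Swapping the sums yields
$$
A^{k}(\p)(\xi) = \sum_{j=0}^{k} \rho_j \sum_{\substack{\f \text{ face of } \p\\ \codim \f = j}} B_\f(\xi), \qquad B_\f(\xi) := \sum_{s \in \CV(\f)} S^{\lin(\f)}(s+\c_s)(\xi).
$$
For $j=0$, $\f = \p$ and $\lin(\p) = V$, so $S^V(s+\c_s) = I(s+\c_s)$ and Brion's formula for integrals gives $B_\p(\xi) = I(\p)(\xi)$, which is entire.

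For a face $\f$ of positive codimension, $B_\f(\xi)$ is generally not analytic: its poles lie on hyperplanes $\langle \xi, v\rangle = 0$ for edge directions~$v$ of the simplicial cones~$\c_s$, $s \in \CV(\f)$. To establish analyticity of the weighted sum, I would fix a non-zero $w \in V$ and analyze the residue of $A^{k}(\p)(\xi)$ along $H_w := \{\xi : \langle\xi, w\rangle = 0\}$. The residue computations of \cite{SLII2014} describe the singular part of $S^{L}(s+\c_s)(\xi)$ along $H_w$ (when $w$ is parallel to an edge of $\c_s$) as an intermediate generating function on the quotient cone $\c_s/\R w \subset V/\R w$, with slicing subspace $(L+\R w)/\R w$. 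Separating faces $\f$ according to whether $\R w \subseteq \lin(\f)$ and reorganising the double sum over $\f$ and $s$ using the closed form of the $\rho_j$'s (Proposition~\ref{prop:cone-patch}) together with the M\"obius identity of Lemma~\ref{lemma:patching-Mobius}, the residue of $A^{k}(\p)(\xi)$ along $H_w$ matches a Brion-style cancellation on the edges of $\p$ in direction~$w$; it therefore vanishes.

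The main obstacle is this residue-cancellation step: one must commute the residue through the intermediate generating functions, organise the face-wise inclusion-exclusion, and verify that the resulting combinatorial identities on the patching coefficients align with a Brion-type cancellation on lower-dimensional faces. Once residues along every hyperplane $H_w$ have been shown to vanish, $A^{k}(\p)(\xi)$ has no poles at $\xi=0$ and is analytic there.
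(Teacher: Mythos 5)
Your overall strategy---reduce analyticity to vanishing of residues along the pole hyperplanes, appealing to the residue formulas of \cite{SLII2014}---is the same as the paper's. The face-wise reorganization you perform is algebraically valid (and your identification $B_\p = I(\p)$ for $j=0$ is correct), but it is not what the paper does and it does not obviously make the key cancellation easier; and that key cancellation is precisely what you leave undone. You explicitly flag it: ``the main obstacle is this residue-cancellation step.'' That obstacle \emph{is} the proof. Without carrying it out, the argument is incomplete.

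Here is how the paper closes the gap, and where your reorganization works against you. First, the poles of $A^{k}(\p)(\xi)$ near $0$ are simple and occur only along $v^\perp$ for $v$ an edge generator of some $\c_s$, i.e.\ an edge direction of $\p$ (this is \cite[Proposition~\ref{sl2:prop:polesSL}]{SLII2014}); so it suffices to treat $w = v$ an edge direction rather than arbitrary $w$. Then, instead of grouping by faces of $\p$, the paper keeps the vertex-by-vertex decomposition into subspaces $L_I^s$ at each $s$, and pairs up the two endpoints $s, s'$ of each edge of $\p$ in direction $v$. Projecting along $v$, the cones $p(s+\c_s)$ and $p(s'+\c_{s'})$ coincide (they are both the tangent cone at the vertex $p(s)$ of the projected polytope), and because $\p$ is simple one can label the $d$ primitive edge generators at $s$ and at $s'$ so that $p(L_I^s) = p(L_I^{s'})$ for every $I$. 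Since $v$ is the edge generator for $\c_s$ and $-v$ for $\c_{s'}$, the residue formula of \cite{SLII2014} gives opposite signs, and the residues of $S^{\CL_{k,s}}(s+\c_s)$ and $S^{\CL_{k,s'}}(s'+\c_{s'})$ along $v^\perp$ cancel \emph{term-by-term over $I$}, with no global combinatorial identity on the patching coefficients needed at all. Your face-grouping obscures exactly this local pairing, which is why you find yourself needing (but not verifying) a ``Brion-style cancellation on lower-dimensional faces.'' If you want to salvage the face-wise route, you would have to show that after projecting mod $\R v$ the two endpoints of each $v$-edge contribute the same quotient data for each face $\f$; at that point you are just re-deriving the paper's vertex pairing by a more circuitous path.
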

\begin{proof}
  Let $(v_a)$ be a set of  pairwise distinct generators of all the edges of $\p$.
  By Proposition \ref{sl2:prop:polesSL} in \cite{SLII2014},  $S^{L}(s+\c_s )$ has simple hyperplane poles near $\xi=0$, given by  the edges $v_a$  of the cone at vertex $s$.
It follows that the product
  $\prod_a \langle \xi,v_a\rangle A^{k}(\p)(\xi)$ is analytic.
  Therefore it is enough to show that for each edge $v_a$, the corresponding residue vanishes.
 In other words, we want to show that
 \begin{equation}\label{eq:zero-residue-1}
  \bigl(\langle \xi,v_a\rangle A^{k}(\p)(\xi) \bigr)\big|_{v_a^\perp}=0
  \end{equation}

 Thus,  we need only to compute the residues for those vertices $s$ where $v=v_a$ is an edge of $c_s$.
 Let $p$ be the projection $V\to V/\R  v$.
Let $s$ and $s'$ be adjacent vertices of $\p$ such that $s'-s\in \R v$.
If we take $v$ to be the edge generator for the cone $\c_s$,
then the edge generator for the other cone
$\c_{s'}$ is $-v$.
Moreover, the  projected cones $p(s+\c_s)$ and
$p(s'+\c_{s'})$ are both equal to the tangent cone at the vertex $p(s)$ of the projected polytope.

Recall that we denote by  $\Jposet{d}{d_0}$ the set of subsets $I\subseteq
\{1,\dots,d\}$ of cardinality $|I|\geq d_0$.
For a given vertex $s$ of $\p$, let $v_1,\dots, v_d$  be primitive  edge generators at the vertex $s$.
For $I\in  \Jposet{d}{d_0}$,  the subspace generated by $v_i$ for $ i\in I$ is denoted by $L_I^s$.
Then the Barvinok patched generating function  for the cone $s+\c_s$, with respect to the codimension
$k=d-d_0$,   is
$$
S^{\CL_{k,s}}(s+\c_s )(\xi)= \sum_{p\geq d_0} (-1)^{p-d_0}\binomial(p-1,d_0-1)\sum_{I,|I|=p}S^{L_I^s}(s+\c_s)(\xi).
$$
We can label the edges at $s$ and $s'$ in such a way that the projections $p(L_I^s)$ and $p(L_I^{s'})$ coincide for all $I$.
Then by Proposition \ref{sl2:prop:polesSL} in \cite{SLII2014},  we conclude that the
residues of $S^{\CL_{k,s}}(s+\c_s )(\xi)$ and $S^{\CL_{k,s'}}(s'+\c_{s'} )(\xi)$
at $v^\perp$ cancel out.
\end{proof}

\begin{remark}[intermediate Todd classes]\label{remark:intermediate-todd}
Let us  outline an explanation
of the analyticity of $A^{k}(\p)(\xi)$ based on intermediate Todd classes.
We will not use this remark in the rest of this article.

When $\p$ is a  simple lattice polytope,
we can relate the function  $A^{k}(\p)(\xi)$ to some equivariant cohomology classes of the associated toric variety.
For simplicity, let us consider a Delzant polytope $\p$.
Let $G$ be the torus with Lie algebra
$\mathfrak g=V^*$ and weight lattice $\lattice\subset \mathfrak g^*=V$.

A smooth toric variety  $M$ and $G$-equivariant line bundle $\CL$ on $M$ are associated with $\p$.
Let $c_\CL(\xi)$ be the $G$-equivariant Chern character of $\CL$
and let ${\rm Todd}^M(\xi)$ be the $G$-equivariant Todd class of $M$.
The  Riemann--Roch theorem relates the set of lattice points of $\p$
with the integral over $M$ of the product of these two classes:

\begin{equation}\label{eq:RR}
S(\p)(\xi)=\int_M \e^{c_\CL(\xi)}{\rm Todd}^M(\xi).
\end{equation}

Brion's formula can be understood  as  the localization formula applied to this integral.
For a generic element $\xi \in {\mathfrak g}$, the vertices of $\p$ correspond to the fixed points in $M$ under the one-parameter group
$\exp(t\xi)$ and
the contribution of the fixed point corresponding to the vertex $s$
is precisely $S(s+\c_s)(\xi)$.

Now, let $c_1,\dots, c_d$ be the equivariant Chern classes of the tangent bundle of $M$, so that
${\rm Todd}^M=\prod_{j=1}^d \frac{c_j}{1-\e^{-c_j}}$.
Fix $0\leq k\leq d$ and let $d_0=d-k$.

Let us introduce the equivariant \emph{intermediate Todd class}
\begin{equation}\label{eq:intermediate-todd-class}
{\rm Todd}_{k}^M=\sum_{p\geq d_0}(-1)^{p-d_0}\binomial(p-1,d_0-1)
\sum_{I,|I|=p}\prod_{j\in I}\frac{c_j}{1-\e^{-c_j}}.
\end{equation}
We recover the cone-by-cone patched generating function when we replace ${\rm Todd}$
with ${\rm Todd}_{k}$ in \eqref{eq:RR} and apply the localization formula:
\begin{multline}\label{eq:intermediate-RR}
\int_M \e^{c_\CL(\xi)}{\rm Todd}_{k}^M(\xi)=\\
\sum_{s\in \CV(\p)}
\sum_{p\geq d_0}(-1)^{p-d_0}\binomial(p-1,d_0-1)\sum_{I,|I|=p}S^{L_I}(s+\c_s)(\xi).
\end{multline}
Now the integral on the left-hand side of \eqref{eq:intermediate-RR} depends on $\xi$ analytically.

However,  we do not have any geometric interpretation
of the quantity computed by the left hand  side of \eqref{eq:intermediate-RR}.
\end{remark}

\section{Three Ehrhart quasi-polynomials}\label{sect:three}
\subsection{Case of a parametric polytope}
 Let $\p(b)\subset V $ be a parametric polytope, defined by  $\alpha\colon V\to \R^N$,  and let $\tau\subset \R^N$ be an admissible  $\alpha$-chamber. Let $h(x)$ be a weight of degree $m$ on $V$.  We fix a codimension~$k$. Associated with these data, we have three \textbf{canonical} weighted Ehrhart quasi-polynomials on $\R^N$  which have the same terms of  polynomial degree $\geq d+m-k$.

 The first
 one, $E^{\{0\}}(\alpha,h,\tau)(b)=E(\alpha,h,\tau)(b)$, does not depend on $k$. It is equal, for $b\in \overline{\tau}$,  to  the ordinary weighted sum $S(\p(b),h)$. This is the quantity that we want to
 study.

 The second one is associated with the full family of slicing subspaces $\CL_k^{\Barvi}$.
 \begin{definition}
 $\Barvinok(\alpha,h,\tau)(b)$ is the quasi-polynomial such that
 \begin{equation} \label{eq:fullBarvinok}
\Barvinok(\alpha,h,\tau)(b)=S^{\CL_k^{\Barvi}}(\p(b),h)  \mbox{  for } b\in \overline{\tau}.
\end{equation}
\end{definition}
\tgreen{Index $k$ now is a superscript in notation $\Barvinok$, to match
  notation from introduction, and to remove notation clash with coefficient
  functions $E_k$.}

Our third quasi-polynomial owes its existence to the analyticity of the
cone-by-cone patched generating function  $A^{k}(\p(b))(\xi)$ of Definition
\ref{def:cone-by-cone-generating-function} for simple polytopes, 
established in Proposition~\ref{prop:cone-by-cone-is-analytic}.  Recall that
$\p(b)$ is simple when $b$ lies in an open chamber.%

\begin{proposition}\label{prop:third-parametric-case}
  Let $\ell\in V^*$, $m\geq0$, and $h(x) =
  \frac{\langle\ell,x\rangle^m}{m!}$. Fix a codimension~$k$ and a
  chamber~$\tau$. 
  Then there exists a unique quasi-polynomial $E^{k,\ConeByCone}(\alpha,h,\tau)(b)$ on
  $\R^N$ such that
  \begin{equation}\label{eq:third-quasi-polynomial}
    E^{k,\ConeByCone}\bigl(\alpha,h,\tau\bigr)(b)= A^{k}(\p(b))_{[m]}(\ell) \quad\mbox{for } b\in {\overline{\tau}},
  \end{equation}
  where $A^{k}(\p)_{[m]}(\xi)$ denotes the homogeneous component of degree $m$
  with respect to~$\xi$. 
\end{proposition}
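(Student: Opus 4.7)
The plan is to define $E^{k,\ConeByCone}(\mu,h,\tau)$ via an explicit Brion-type formula, modeled on \eqref{eq:top-parametric-Ehrhart} but with the intermediate generating function $\retroS^L$ replaced by its cone-by-cone patched analogue. Concretely, I would set
\begin{equation*}
E^{k,\ConeByCone}(\mu,h,\tau)(b) := \sum_{r=0}^{m+d} \left(\sum_{B \in \CB_\tau} \retroS^{\CL_{k,B}}(s_B(b),\c_B)_{[m-r]}(\xi) \, \frac{\langle\xi,s_B(b)\rangle^r}{r!}\right)\bigg|_{\xi=\ell},
\end{equation*}
where $\CL_{k,B}$ denotes the family of subspaces of $V$ parallel to faces of $\c_B$ of codimension at most~$k$. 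The task splits into showing that this expression is a well-defined quasi-polynomial in $b$ on all of~$\R^N$, and that on $\overline{\tau}$ it equals $A^{k}(\p(b))_{[m]}(\ell)$.

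For each pair $(B,r)$, I would invoke Theorem~\ref{sl2:prop:homogeneous-ML} of \cite{SLII2014} exactly as in the proof of Theorem~\ref{th:ehrhart-chamber}: after composing with the linear map $b \mapsto s_B(b)$, each summand is a quasi-polynomial function of $b$ with values in $\CR_{[m]}(V^*)$, that is, in rational functions of $\xi$ of degree~$m$. These individual rational functions are in general singular at $\xi = \ell$, so the main obstacle is to establish that the full sum over~$B$ is polynomial in~$\xi$ for \emph{every} $b\in\R^N$, not only for $b$ in the open chamber where Proposition~\ref{prop:cone-by-cone-is-analytic} directly applies.

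The key input is the following. For $b \in \tau$, the polytope $\p(b)$ is simple with vertex set $\{s_B(b) : B\in\CB_\tau\}$ and cones of feasible directions $\c_B$, so by Definition \ref{def:cone-by-cone-generating-function} together with \eqref{eq:MversusS},
\begin{equation*}
\sum_{B\in\CB_\tau}\retroS^{\CL_{k,B}}(s_B(b),\c_B)(\xi) \, \e^{\langle\xi,s_B(b)\rangle} = A^{k}(\p(b))(\xi),
\end{equation*}
which Proposition~\ref{prop:cone-by-cone-is-analytic} ensures is analytic at $\xi=0$. Hence, on the non-empty open set $\tau$, every polar part of the sum vanishes identically in~$b$. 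Each polar coefficient belongs to the quasi-polynomial algebra $\polypp\CP^{\Psi^L_\tau(\mu)}(\R^N)$, which is periodic modulo some $q\Z^N$; since the open cone $\tau$ has image equal to the whole torus $\R^N/q\Z^N$, its intersection with every $q\Z^N$-coset is an infinite subset on which the polar coefficient, being a polynomial on that coset, must vanish. It follows that each polar coefficient vanishes on all of $\R^N$, so the sum is polynomial in~$\xi$ with quasi-polynomial coefficients in~$b$, and evaluation at $\xi=\ell$ yields a bona fide quasi-polynomial on~$\R^N$.

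Finally, the equality $E^{k,\ConeByCone}(\mu,h,\tau)(b) = A^{k}(\p(b))_{[m]}(\ell)$ for $b\in\tau$ is built into the construction via \eqref{eq:MversusS}, and extends to $b\in\overline{\tau}$ by the same continuity argument underlying Proposition~\ref{th:brion-for-parametric-polytopes}: if several $B\in\CB_\tau$ index the same vertex of $\p(b)$ on the boundary, the corresponding contributions combine correctly. Uniqueness follows because the same periodicity argument applied to the difference of two candidate quasi-polynomials shows that any quasi-polynomial agreeing with our explicit formula on~$\overline{\tau}$ must agree with it pointwise on all of~$\R^N$.
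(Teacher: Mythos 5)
Your proof is correct and follows essentially the same route as the paper: write the candidate quasi-polynomial via a Brion-type formula (the natural patched analogue of \eqref{eq:top-parametric-Ehrhart}), use Proposition~\ref{prop:cone-by-cone-is-analytic} to see the $\xi$-singularities cancel for $b$ in the open chamber, and invoke the bidegree results of \cite{SLII2014} for the quasi-polynomial dependence on~$b$. The paper's proof is extremely terse (``similar to the proof of Theorem~\ref{th:ehrhart-chamber}''), and you have usefully spelled out the density-plus-periodicity argument it leaves implicit, namely that the polar coefficients are quasi-polynomials of $b$ on~$\R^N$ vanishing on the full-dimensional open cone~$\tau$, and that any quasi-polynomial vanishing on such a cone vanishes identically by restriction to cosets of $q\Z^N$; this is exactly what is needed both to extend from $\tau$ to all of~$\R^N$ and to obtain uniqueness.
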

We extend the definition of $E^{k,\ConeByCone}\bigl(\alpha,h,\tau\bigr)(b)$ to
arbitrary polynomial functions $h(x)$ on~$V$ by 
decomposition as a sum of powers of linear forms.
\tgreen{Replaced use of differential operators by sums of power of linear forms.}
The piecewise quasi-polynomial $S^{k,\ConeByCone}(\p(b),h)$ from the
introduction is given on the closure~$\overline{\tau}$ of each chamber by
$E^{k,\ConeByCone}(\alpha,h,\tau)(b)$. \tgreen{This makes the connection to the
  intro notation missing so far.}

\begin{proof}
We need only prove that  $b\mapsto  A^{k}(\p(b))_{[m]}(\xi) $ is given by a
quasi-polynomial function of $b$ (with values in the space of polynomials
in~$\xi$), when $b$ varies in $\tau$. The proof is similar to that of
Theorem~\ref{th:ehrhart-chamber} and again 
relies on \cite[{Theorem~\ref{sl2:prop:homogeneous-ML}}]{SLII2014}.
\end{proof}

The next theorem  follows immediately from Proposition \ref{prop:computation-dilated-Ehrhart-terms}.
\begin{theorem}\label{th:cone-by-cone-parametric-Ehrhart}
Let $\p(b)\subset V $ be a parametric polytope, defined by  $\alpha\colon V\to \R^N$,  and let $\tau\subset \R^N$ be an $\alpha$-chamber. Fix a codimension~$k$.  Let $h(x)$ be a polynomial function on $V$ of degree $m$.
Then the three quasi-polynomials  $E(\alpha,h,\tau)(b)$, $ \Barvinok(\alpha,h,\tau)(b)$ and $ E^{k,\ConeByCone}(\alpha,h,\tau)(b)$
 have the same  terms of polynomial degree $\geq d+m-k$.
\end{theorem}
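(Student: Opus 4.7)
The plan is to reduce the theorem to Theorem~\ref{prop:computation-Ehrhart-terms}, applied twice with two different choices of the families~$\CL_{k,B}$ of subspaces at each vertex.

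First, by linearity, I would reduce to the case where $h(x) = \langle\ell,x\rangle^m/m!$ for some $\ell\in V^*$ (as was done in the proof of Theorem~\ref{th:ehrhart-chamber}). For such a weight, Theorem~\ref{prop:computation-Ehrhart-terms} tells us that for every family $\{\CL_{k,B}\}_{B\in\CB_\tau}$ with each $\CL_{k,B}$ closed under sum and containing the subspaces parallel to the faces of $\c_B$ of codimension $\leq k$, the terms of polynomial degree $r\geq d+m-k$ of $E(\mu,h,\tau)(b)$ admit the single formula
\begin{equation*}
E_{[r]}(\mu,h,\tau)(b)=\left(\sum_{B\in \CB_\tau} \retroS^{\CL_{k,B}}(s_B(b),\c_B)_{[m-r]}(\xi) \frac{\langle\xi,s_B(b)\rangle^r}{r!}\right)\bigg|_{\xi=\ell}.
\end{equation*}
Thus it suffices to show that the same formula, restricted to $r\geq d+m-k$, also expresses the degree-$r$ terms of $\Barvinok(\mu,h,\tau)(b)$ and of $E^{k,\ConeByCone}(\mu,h,\tau)(b)$, for appropriate choices of the families.

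For $\Barvinok$, I would take $\CL_{k,B}=\CL_k^{\Barvi}$ uniformly for every $B\in\CB_\tau$; this family is closed under sum and contains the spaces parallel to all faces of $\p(b)$, hence in particular those parallel to faces of $\c_B$ of codimension $\leq k$. By Brion's formula~\eqref{th:brion} applied to each intermediate generating function $S^L(\p(b))(\xi)$ and then linearly combined via the patching coefficients $\rho(L)$, we obtain
\[
 S^{\CL_k^{\Barvi}}(\p(b))(\xi)=\sum_{B\in\CB_\tau} S^{\CL_k^{\Barvi}}(s_B(b)+\c_B)(\xi).
\]
Extracting the homogeneous component of degree $m$ in $\xi$, evaluating at $\xi=\ell$, and decomposing via the fundamental identity~\eqref{eq:MversusS} yields exactly the formula above (for \emph{every} $r$), and in particular for $r\geq d+m-k$ it agrees with $E_{[r]}$.

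For $E^{k,\ConeByCone}$, I would take $\CL_{k,B}=\CL_{k,s_B(b)}^{\ConeByCone}$, which by definition is the smallest sum-closed family containing the subspaces parallel to the faces of $\c_B$ of codimension $\leq k$. Since $\p(b)$ is simple for $b$ in the open chamber and the vertices are indexed by $\CB_\tau$ with cones $\c_B$, the cone-by-cone patched generating function of Definition~\ref{def:cone-by-cone-generating-function} reads
\[
A^{k}(\p(b))(\xi)=\sum_{B\in\CB_\tau} S^{\CL_{k,B}}(s_B(b)+\c_B)(\xi).
\]
Applying~\eqref{eq:MversusS} term by term and taking the homogeneous component of degree $m$ at $\xi=\ell$, Proposition~\ref{prop:third-parametric-case} identifies $E^{k,\ConeByCone}_{[r]}(\mu,h,\tau)(b)$ with the same formula as above, and hence with $E_{[r]}$ for $r\geq d+m-k$.

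The only subtle point to watch is that each individual summand $\retroS^{\CL_{k,B}}(s_B(b),\c_B)_{[m-r]}(\xi)$ is merely a rational function of $\xi$ and may be singular at $\xi=\ell$; only the sum over $B\in\CB_\tau$ is polynomial in~$\xi$ and hence well-defined at $\ell$. This cancellation of singularities is guaranteed on the Barvinok side by Brion's formula for $S^{\CL_k^{\Barvi}}(\p(b))(\xi)$, and on the cone-by-cone side by Proposition~\ref{prop:cone-by-cone-is-analytic}. So there is no real obstacle beyond carefully tracking the homogeneous components; the theorem really does follow directly from Theorem~\ref{prop:computation-Ehrhart-terms}.
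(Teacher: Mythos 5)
Your proof is correct and is essentially the paper's own argument: the paper's proof of this theorem is a one-line appeal to the earlier result on computing the high polynomial-degree terms, and you fill in exactly the intended details. You correctly apply Theorem~\ref{prop:computation-Ehrhart-terms} twice, once with the uniform family $\CL_k^{\Barvi}$ (tied to $\Barvinok$ via Brion's formula and linearity of the patching combination) and once with the vertex-wise families $\CL_{k,s_B}^{\ConeByCone}$ (tied to $A^k$ and hence $E^{k,\ConeByCone}$ by Definition~\ref{def:cone-by-cone-generating-function} and Proposition~\ref{prop:third-parametric-case}), and you correctly flag that the individual $B$-terms may be singular at $\xi=\ell$ while the sums are not. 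Nothing to add beyond noting that the paper's printed reference to Proposition~\ref{prop:computation-dilated-Ehrhart-terms} (the dilated-polytope version) appears to be a mis-citation: the parametric version you invoke, Theorem~\ref{prop:computation-Ehrhart-terms}, is the one actually needed here, so your choice is the right one.
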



\subsection{Case of a dilated polytope. Polynomial time algorithms. Examples.}\label{sub:dilatedsimplex}
In this section, we consider just one polytope $\p\subset V$ and the dilated
polytope $t\p$ for $t$ real $>0$. We fix a weight $h(x)$ on $V$ and a
codimension $k$.  If $\p$ is simple, then we have again three canonical
quasi-polynomials of the parameter $t$ associated with $\p$ and $h(x)$, all three of polynomial degree $d+m$.  If $\p$ is not simple,
only the first two quasi-polynomials are defined.\footnote{
  Of course, we could define $E^{k, \ConeByCone}(\p, h)$ as well for
  non-simple polytopes by taking limits.  However, it would no longer be canonical, 
  as it would depend on the path.  It is an open question whether a canonical
  definition is possible, which also should have a toric interpretation
  (Remark~\ref{remark:intermediate-todd}).  \tgreen{Moved this footnote to the
  right place -- was wrong before.}} 
The first quasi-polynomial is defined by
$$
E(\p,h)(t) = E^{\{0\}}(\p,h)(t)=S(t\p,h) \quad\mbox{  for } t> 0.
$$
The second quasi-polynomial is defined by
$$
\Barvinok(\p,h)(t)=S^{\CL_k^{\Barvi}}(t\p,h)  \quad\mbox{  for } t> 0.
$$
If $\p$ is simple, the third quasi-polynomial is defined as follows.  If $h(x) =
\frac{\langle\ell,x\rangle^m}{m!}$, then 
$$
E^{k,\ConeByCone}(\p,h)(t)= 
A^{k}(t\p)_{[m]}(\ell)
\quad\mbox{  for } t>0.
$$
This definition is again extended to
arbitrary polynomial functions $h(x)$ on~$V$ by 
decomposition as a sum of powers of linear forms.
\tgreen{Gave a definition by sums of power of linear forms.}

Those three quasi-polynomials have the same terms of polynomial degree $\geq d+m-k$.

Furthermore, if   $\p$ is a rational simple polytope, and if $h$ is a power of
a rational linear form,  $\Barvinok(\p,h)(t)$ and $E^{k,\ConeByCone}(\p,h)(t)$
can be computed in polynomial time when the codimension $k$ is fixed.  
(We suppress a detailed statement of the algorithm and its complexity.)
Similar results on polynomial complexity hold for a more general weight $ h(x)$.  One can assume that
the weight is given as a polynomial in a fixed number $R$ of linear
forms,
$h(x)=f(\langle\ell_1,x\rangle,\dots,\langle\ell_R,x\rangle)$, or
has a fixed degree $D$.

For $E^{k,\ConeByCone}(\p,\langle\ell,x\rangle^M)(t)$, this result is obtained in  \cite{so-called-paper-1}.
In this article,
we used the step-functions $n\mapsto \{\zeta n\}_q$ which are defined as
$\zeta n \bmod q$
for $\zeta,q,n \in \Z$. However, as noted in \cite{so-called-paper-2}, the
same proof\/\footnote{Of course, we can no longer reduce $\zeta$ modulo $q$ in
  the case of real parameters.}  gives the result for real
parameters using $t\mapsto q \{\frac{\zeta}{q} t\}$.

For the case of $\Barvinok(\p,\langle\ell,x\rangle^M)(t)$, we apply  directly Theorem  28 in  \cite{so-called-paper-2} where we considered just one intermediate sum $S^L(t\p,h)$. The crucial point is that the codimension of $L$ is bounded by $k$, which is fixed.
Here, we need also to compute the patching function.
This can be done by computing recursively  the M\"obius function of the poset $\CL_k^{\Barvi}$.

The algorithm for computing
$E^{k,\ConeByCone}(\p,\langle\ell,x\rangle^M)(t)$ is simpler than the original
algorithm given by Barvinok in \cite{barvinok-2006-ehrhart-quasipolynomial}, where the  subspaces~$L$ in the Barvinok family
do not necessarily correspond to faces of $\p$.
\bigbreak

We have implemented these algorithms in the case where $\p$ is a simplex,  in
Maple.  The Maple programs are distributed as part of \emph{LattE integrale},
version 1.7.2 \cite{latteintegrale}, and also separately via the LattE
website.\footnote{The most current versions are available at
  \url{https://www.math.ucdavis.edu/~latte/software/packages/maple/}.} 
We give below some examples computed with our Maple programs.

In the case of  a lattice simplex,   when restricted to  $t\in \N$,  the three quasi-polynomials are usual polynomials in $t$.
Here is an example.
\begin{example}\label{ex:4-simplex}
  Let $\p$ be the $4$-dimensional simplex with vertices
  $$[4,6,4,3],[5,7,9,1],[5,7,3,7],[6,8,3,9],[2,1,8,0].$$
  We use the weight function $h=1$.  Table~\ref{tab:4-simplex} shows the quasi-polynomials
  $E^{k,\Barvi}\allowbreak(\p,1)(t)$ and $E^{k,\ConeByCone}(\p,1)(t)$, for maximal
  codimension $k$, $0\leq k\leq 4$.  For $k=0$, both give the volume of the
  dilated simplex, for $k=4$, both give the exact number of points.
  \begin{table}[t]
  \centering
  \caption{The two polynomials $E^{k,\Barvi}(\p,1)(t)$ and
    $E^{k,\ConeByCone}(\p,1)(t)$ for integer dilations of the lattice simplex
    of Example~\ref{ex:4-simplex}}
  \label{tab:4-simplex}
  $\def\arraystretch{1.3}
  \begin{array}{ccc}
    \toprule
    k & E^{k,\ConeByCone}(\p,1)(t) & E^{k,\Barvi}(\p,1)(t) \\
    \midrule
    0
    & \frac{3}{4} t^4
    & \frac{3}{4} t^4
    \\
    1
    & \frac{3}{4}{t}^{4}+2\,{t}^{3}+{\frac {7}{24}}\,{t}^{2}-{\frac {5}{5184}}
    & \frac{3}{4} t^4+2 t^3+\frac{7}{24} t^2
    \\
    2
    & \frac{3}{4}{t}^{4}+2\,{t}^{3}+{\frac {15}{4}}\,{t}^{2}+{\frac {15}{8}}\,t+{\frac {67}{432}}
    & \frac{3}{4} t^4+2 t^3+\frac {15}{4} t^2+\frac {15}{8} t
    \\
    3
    & \frac{3}{4}{t}^{4}+2\,{t}^{3}+{\frac {15}{4}}\,{t}^{2}+\frac{7}{2}\,t+{\frac {389}{ 432}}
    & \frac{3}{4} t^4+2 t^3+\frac {15}{4} t^2+\frac{7}{2} t
    \\
    4
    & \frac{3}{4}{t}^{4}+2\,{t}^{3}+{\frac {15}{4}}\,{t}^{2}+\frac{7}{2}\,t+1
    & \frac{3}{4}{t}^{4}+2\,{t}^{3}+{\frac {15}{4}}\,{t}^{2}+\frac{7}{2}\,t+1
    \\
    \bottomrule
  \end{array}$
\end{table}
\end{example}

Next, an example of a rational triangle dilated by a real parameter~$t$.
\begin{example}\label{ex:3poly-dim3}
  \let\frac=\tfrac
Let $\p$ be the triangle with vertices $ [1,1], [1, 2], [2, 2]$.
We use the weight function $h(x)=1$, so we approximate the number of lattice
points in the triangle dilated by a real number~$t$.  We list  below  the
cone-by-cone and the full-Barvinok  quasi-polynomials,
for   codimension $k$, $0\leq k\leq 2$.
For $k=0$, both $E^{k,\ConeByCone}(\p,1)(t)$ and $\Barvinok(\p,1)(t)$
give  the area of the dilated triangle, which is $\frac{t^2}{2}$.

\noindent
For $k=1$,
\begin{align*}
  E^{k,\ConeByCone}(\p,1)(t)
  &= \frac{t^2}{2} + \left( \frac{3}{2}- \{-t\}-\{2t\}\right)t\\
  &\qquad +\frac14-\frac{\{-t\}}{2}-\frac{\{2t\}}{2}+\frac{\{-t\}^2}{2}+\frac{\{2t\}^2}{2}, \\
  E^{k,\Barvi}(\p,1)(t)
  &= \frac{t^2}{2} + \left( \frac{3}{2} -\{-t\}
    -\{2t\} \right)t -\frac{\{t\}^2}{2}+\frac{\{t\}}{2}.
\end{align*}
For $k=2$, both give the exact number of points,
\begin{multline}\label{eq:exemple-triangle}
 \frac{t^2}{2}+ \bigl( \frac{3}{2}-\{-t\}  -\{2t\}\bigr)t \\
 +\frac{1}{2}\{2t\}^2+\frac{1}{2}\{-t\}^2+\{2t\}\{-t\}
-\frac{3}{2}\{-t\}
-\frac{3}{2}\{2 t\}+1.
\end{multline}
For instance, for $t=\frac12$, $t=\frac16\pi=0.52359877
\dots$, or $t=\frac12 {\sqrt[3]{17/10}}=0.59674159
\dots$, the last expression gives $ 1$,
which is indeed the number of lattice points in the triangle with vertices  $
[\frac12,\frac12]$, $[\frac12, 1]$, and $[1, 1]$.

On the other hand, for $t=1$, Formula~\eqref{eq:exemple-triangle} gives~$3$, which is
indeed the number of lattice points in the triangle with vertices
$ [1,1]$, $[1, 2]$, and $[2, 2]$, while for $t=1\pm \epsilon$,  with any  small
$\epsilon$, Formula~\eqref{eq:exemple-triangle} gives~$1$.  We leave it as an
exercise to the reader to understand the mystery; Figure~\ref{fig:epsilon}
may help. Figure \ref{fig:exemple-triangle}  displays the graphs
of the above quasi-polynomials.
\end{example}
\begin{figure}
\begin{center}
  \includegraphics[width=6cm]{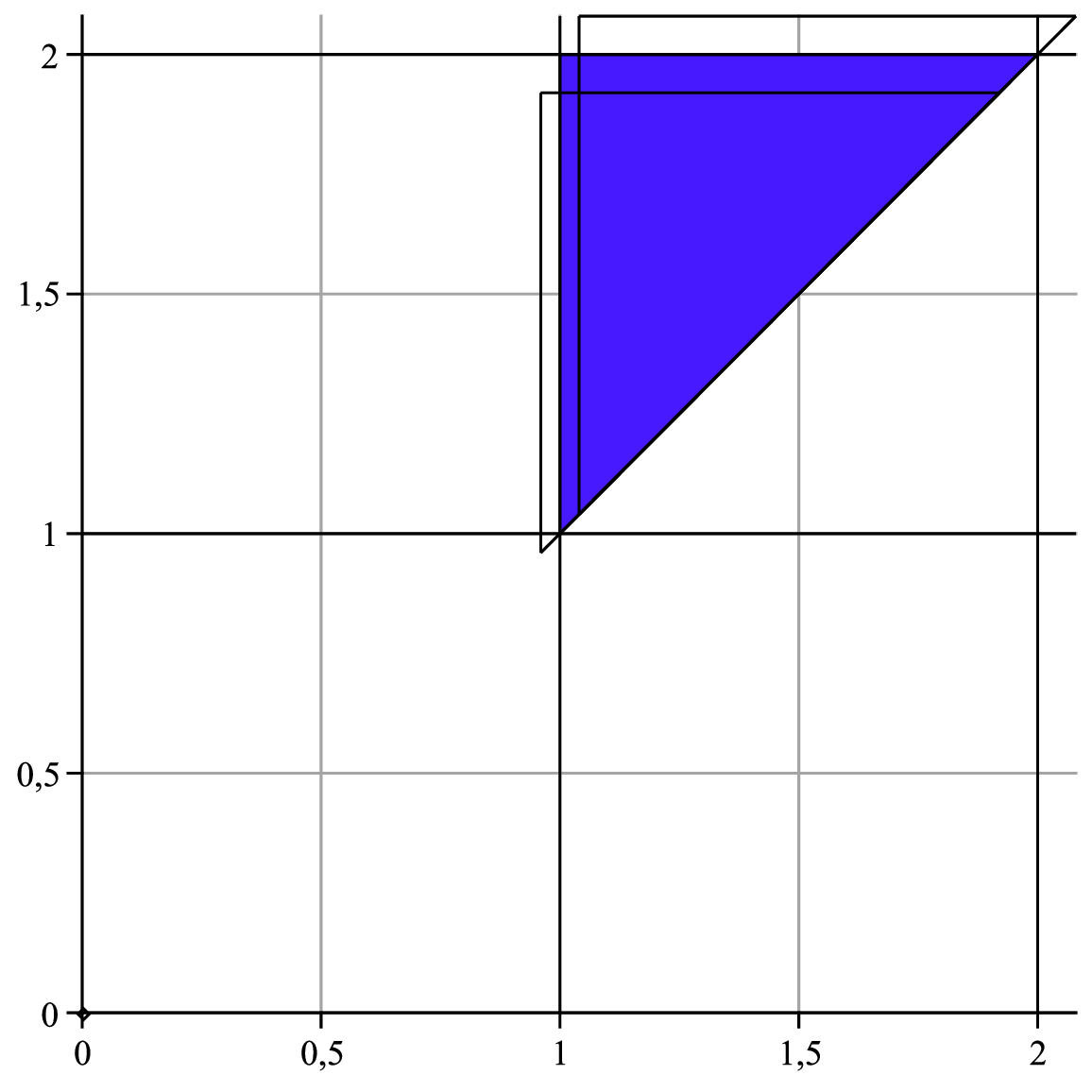} \includegraphics[width=6cm]{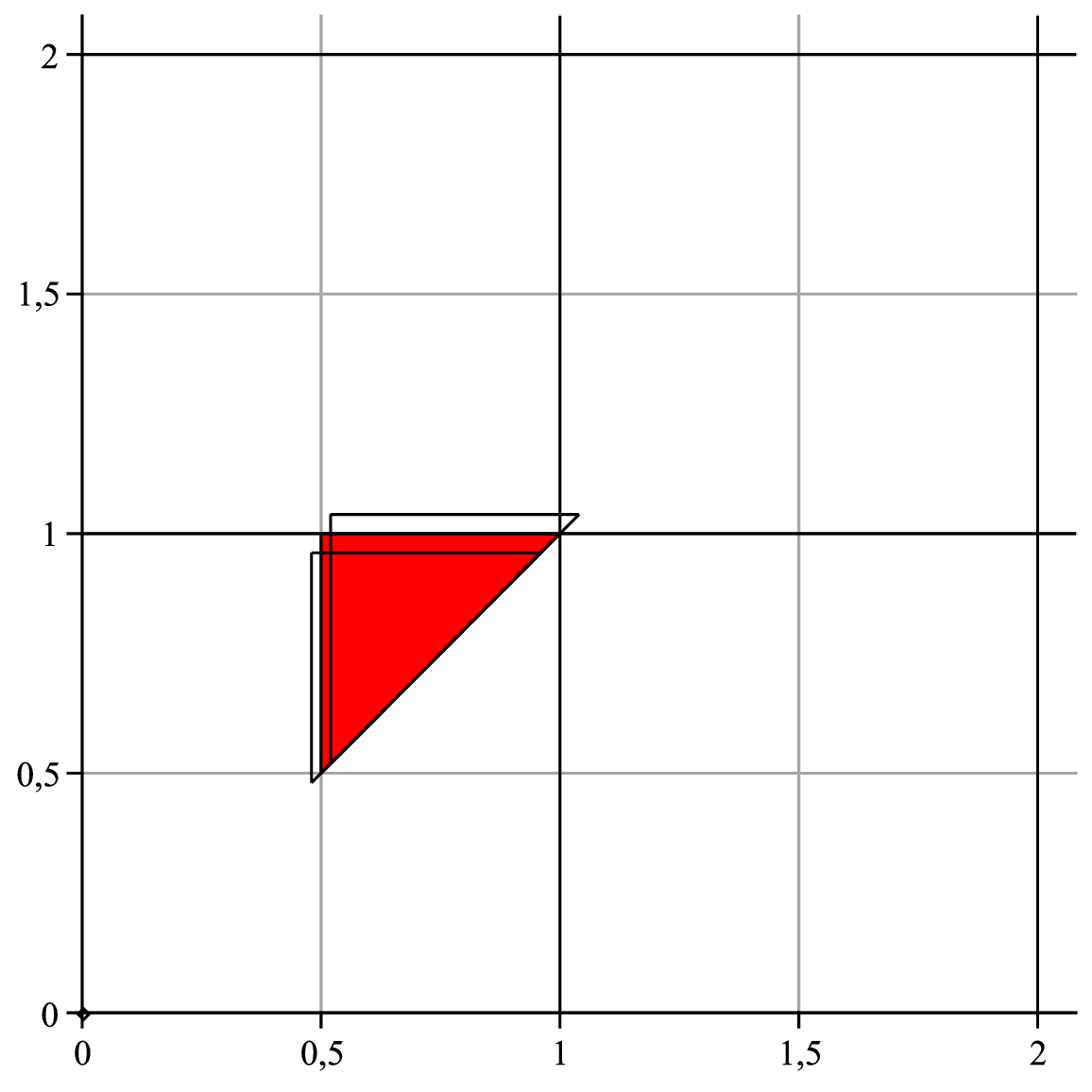}\\
 \caption{ In blue, the triangle with vertices $[1,1]$, $[1, 2]$, and $[2, 2]$, dilated by $t=1+\epsilon$ and $t=1-\epsilon$. In red, the same triangle, dilated by $t=\tfrac12$ and $t=\tfrac12\pm\epsilon$. }\label{fig:epsilon}
 \end{center}
\end{figure}
\begin{figure}
\begin{center}
  \includegraphics[width=6cm]{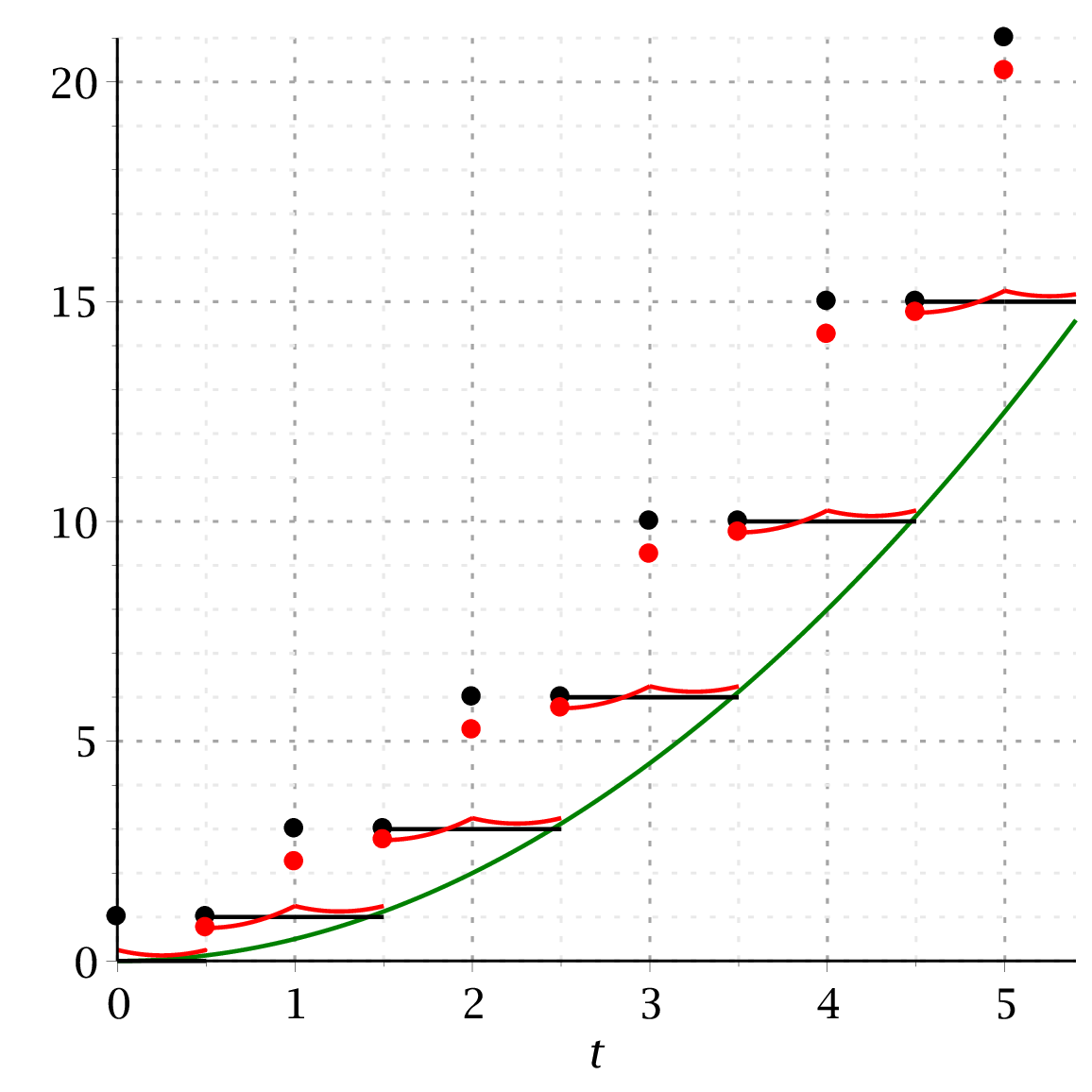}\includegraphics[width=6cm]{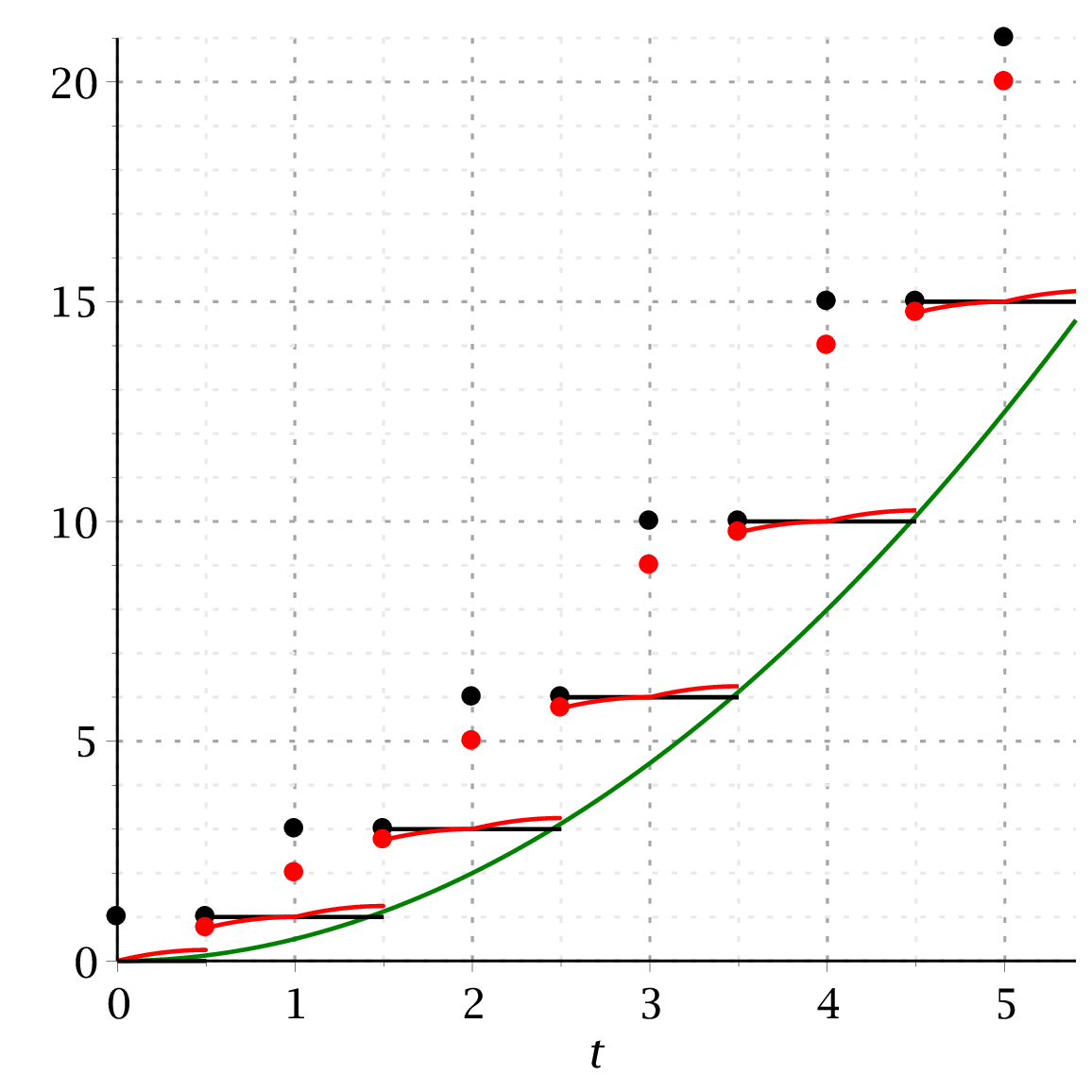}\\
 \caption{ Graphs of the quasi-polynomials
   $E^{k,\ConeByCone}(\p,1)(t)$  (\emph{left}) and  $E^{k,\Barvi}(\p,1)(t)$ (\emph{right}) 
   for the triangle~$\p$ with vertices $ [1,1], [1, 2], [2,
   2]$
   and $k = 0$ (\emph{green}), $k=1$ (\emph{red}), and $k=2$ (\emph{black}).}\label{fig:exemple-triangle}
 \end{center}
\end{figure}

In higher dimensions, the quasi-polynomials of a real variable $t$ which
  arise are too long to display.  We will only show some
  graphs
.
\begin{example}\label{ex:3-simplex}
  Figures \ref{fig:simplexDim3-cone-by-cone} and
  \ref{fig:simplexDim3-Barvinok} display the graphs of
  the quasi-polynomials
  $E^{k,\ConeByCone}(\p,1)(t)$ and $E^{k,\Barvi}(\p,1)(t)$ for the
  $3$-dimen\-sional simplex with vertices $$[0,1,1],[4,2,1],[1,1,2],[1,2,4],$$
  for $t\in [1.3,3.9]$. For $k=0$, both quasi-polynomials give the volume
  $t^3$ of the dilated simplex; for $k=3$, they are both equal to $S(t\p,1)$
  which gives the number of integral points. In this example, we see that the
  function $t\mapsto S(t\p,1)$ has discontinuities on any side (left, right or
  both).
\end{example}
\begin{figure}[p]
\begin{center}
  \includegraphics[width=8cm]{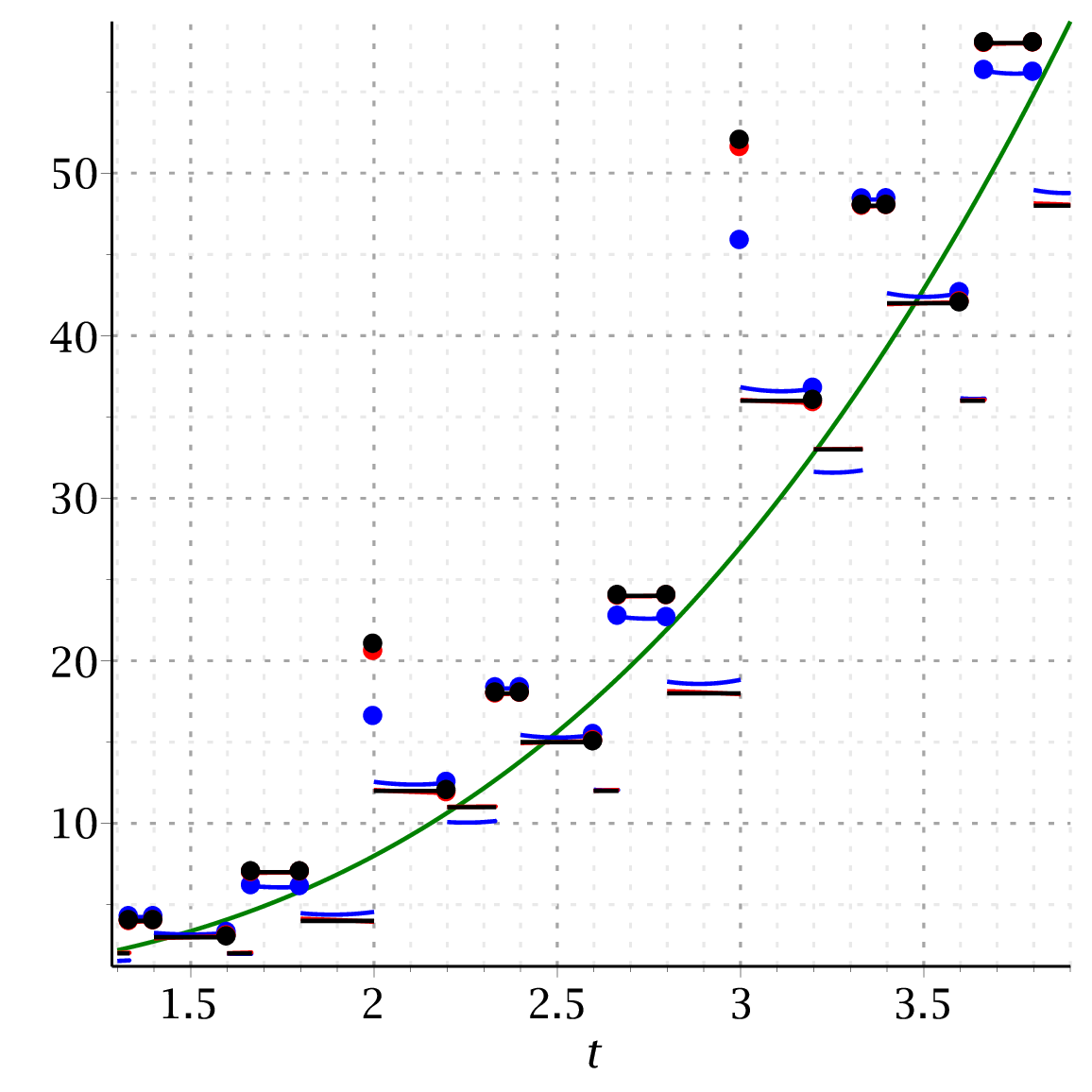}
 \caption{$E^{k,\ConeByCone}(\p,1)(t)$  for the 3-dimensional simplex with
   vertices $[0,1,1]$, $[4,2,1]$, $[1,1,2]$, and $[1,2,4]$ from Example~\ref{ex:3-simplex}, for $t\in [1.3,3.9]$
   and $k = 0$ (\emph{green}), $k=1$ (\emph{blue}), $k=2$ (\emph{red}), $k=3$
   (\emph{black}).}
 \label{fig:simplexDim3-cone-by-cone}
 \end{center}
\end{figure}
\begin{figure}[p]
\begin{center}
  \includegraphics[width=8cm]{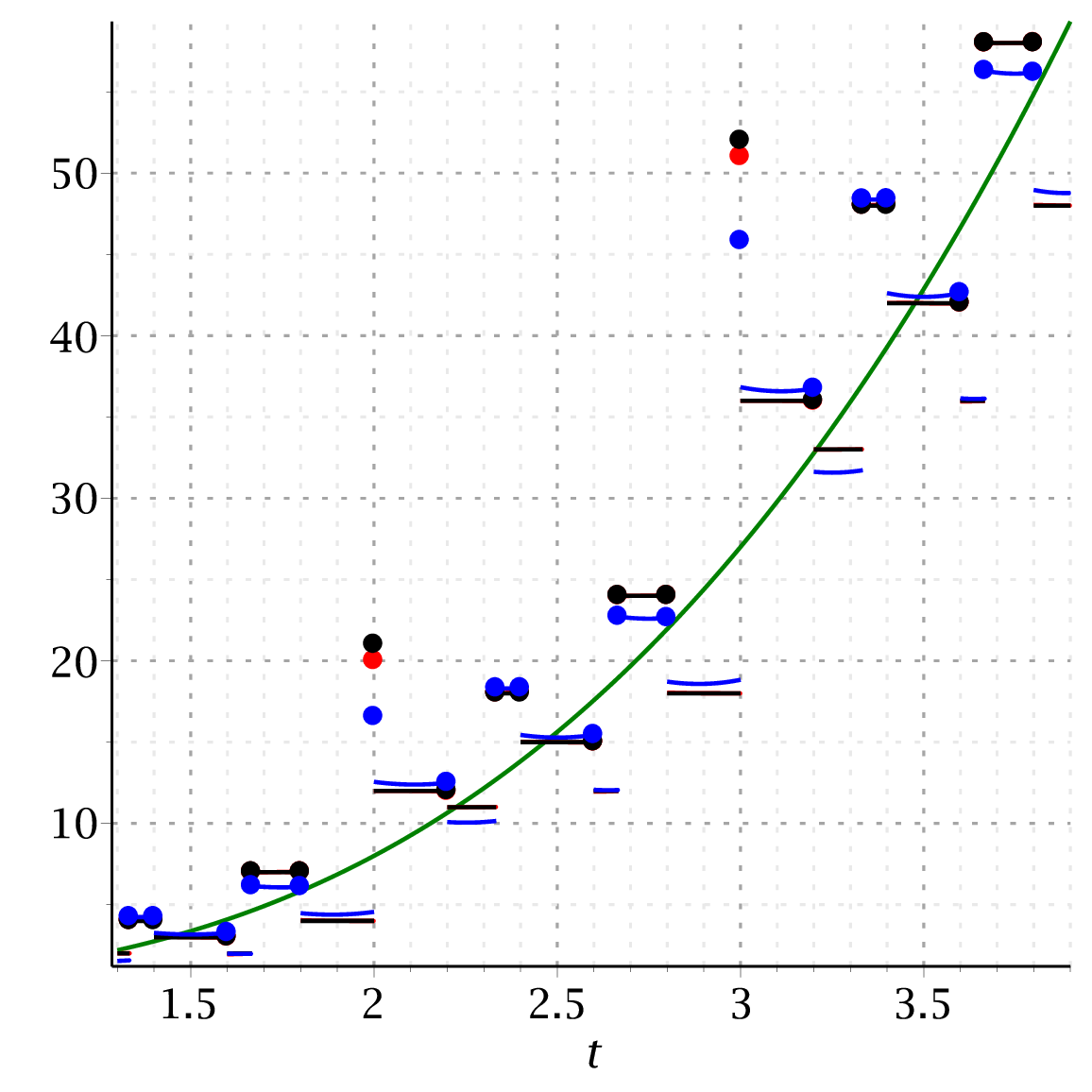}
 \caption{$E^{k,\Barvi}(\p,1)(t)$ for the same simplex as in  Figure \ref{fig:simplexDim3-cone-by-cone}}
 \label{fig:simplexDim3-Barvinok}
 \end{center}
\end{figure}

\clearpage

\section*{Acknowledgments}

 This article is part of a research project which was made possible by
 several meetings of the authors,
at the Centro di Ricerca Matematica Ennio De Giorgi of the Scuola
Normale Superiore, Pisa in 2009,  in a SQuaRE program at the
American Institute of Mathematics, Palo Alto, in July 2009,
September 2010, and February 2012,
 in the Research in Pairs program at
Mathematisches Forschungsinstitut Oberwolfach in March/April 2010,
and at the Institute for Mathematical Sciences (IMS) of the National
University of Singapore in November/December 2013.
The support of all four institutions is gratefully acknowledged.
V.~Baldoni was partially supported by a PRIN2009 grant.
J. De Loera was partially supported by grant DMS-0914107 of the National Science Foundation.
M.~K\"oppe was partially supported by grant DMS-0914873 of the National
Science Foundation.

\bibliographystyle{../amsabbrvurl}
\bibliography{../biblio}
\end{document}